\pgfplotsset{compat=1.5}
\renewcommand\expandafter\subsection\expandafter
  \newcommand\@fb@secFB{\FloatBarrier
    \gdef\@fb@afterHHook{\@fb@topbarrier \gdef\@fb@afterHHook{}}}%
  \g@addto@macro\@afterheading{\@fb@afterHHook}%
  \gdef\@fb@afterHHook{}%
\newtheorem{theorem}{Theorem}[section]
\newtheorem{lemma}[theorem]{Lemma}
\newtheorem{remark}[theorem]{Remark}
\newtheorem{problem}[theorem]{Problem}
\newtheorem{definition}[theorem]{Definition}
\newcommand{\logLogSlopeTriangleInv}[6]
{
    \pgfplotsextra
    {
        \pgfkeysgetvalue{/pgfplots/xmin}{\xmin}
        \pgfkeysgetvalue{/pgfplots/xmax}{\xmax}
        \pgfkeysgetvalue{/pgfplots/ymin}{\ymin}
        \pgfkeysgetvalue{/pgfplots/ymax}{\ymax}

        % Calculate auxilliary quantities, in relative sense.
        \pgfmathsetmacro{\xArel}{#1}
        \pgfmathsetmacro{\yArel}{#3}
        \pgfmathsetmacro{\xBrel}{#1+#2}
        \pgfmathsetmacro{\yBrel}{\yArel}
        \pgfmathsetmacro{\xCrel}{\xArel}
        %\pgfmathsetmacro{\yCrel}{ln(\yC/exp(\ymin))/ln(exp(\ymax)/exp(\ymin))} % REPLACE THIS EXPRESSION WITH AN EXPRESSION INDEPENDENT OF \yC TO PREVENT THE 'DIMENSION TOO LARGE' ERROR.

        \pgfmathsetmacro{\lnxB}{\xmin*(1-(#1+#2))+\xmax*(#1+#2)} % in [xmin,xmax].
        \pgfmathsetmacro{\lnxA}{\xmin*(1-#1)+\xmax*#1} % in [xmin,xmax].
        \pgfmathsetmacro{\lnyA}{\ymin*(1-#3)+\ymax*#3} % in [ymin,ymax].
        \pgfmathsetmacro{\lnyC}{\lnyA+#4*(\lnxA-\lnxB)}
        \pgfmathsetmacro{\yCrel}{\lnyC-\ymin)/(\ymax-\ymin)} % THE IMPROVED EXPRESSION WITHOUT 'DIMENSION TOO LARGE' ERROR.

        % Define coordinates for \draw. MIND THE 'rel axis cs' as opposed to the 'axis cs'.
        \coordinate (A) at (rel axis cs:\xArel,\yArel);
        \coordinate (B) at (rel axis cs:\xBrel,\yBrel);
        \coordinate (C) at (rel axis cs:\xCrel,\yCrel);

        % Draw slope triangle.
        \draw[#5]   (A)-- node[pos=0.5,anchor=south] {1}
                    (B)-- 
                    (C)-- node[pos=0.5,anchor=west] {#6}
                    cycle;
    }
}
\newcommand{\logLogSlopeTriangle}[6]
{
    % #1. Relative offset in x direction.
    % #2. Width in x direction, so xA-xB.
    % #3. Relative offset in y direction.
    % #4. Slope d(y)/d(log10(x)).
    % #5. Plot options.

    \pgfplotsextra
    {
        \pgfkeysgetvalue{/pgfplots/xmin}{\xmin}
        \pgfkeysgetvalue{/pgfplots/xmax}{\xmax}
        \pgfkeysgetvalue{/pgfplots/ymin}{\ymin}
        \pgfkeysgetvalue{/pgfplots/ymax}{\ymax}

        % Calculate auxilliary quantities, in relative sense.
        \pgfmathsetmacro{\xArel}{#1}
        \pgfmathsetmacro{\yArel}{#3}
        \pgfmathsetmacro{\xBrel}{#1+#2}
        \pgfmathsetmacro{\yBrel}{\yArel}
        \pgfmathsetmacro{\xCrel}{\xArel}
        %\pgfmathsetmacro{\yCrel}{ln(\yC/exp(\ymin))/ln(exp(\ymax)/exp(\ymin))} % REPLACE THIS EXPRESSION WITH AN EXPRESSION INDEPENDENT OF \yC TO PREVENT THE 'DIMENSION TOO LARGE' ERROR.

        \pgfmathsetmacro{\lnxB}{\xmin*(1-(#1+#2))+\xmax*(#1+#2)} % in [xmin,xmax].
        \pgfmathsetmacro{\lnxA}{\xmin*(1-#1)+\xmax*#1} % in [xmin,xmax].
        \pgfmathsetmacro{\lnyA}{\ymin*(1-#3)+\ymax*#3} % in [ymin,ymax].
        \pgfmathsetmacro{\lnyC}{\lnyA+#4*(\lnxA-\lnxB)}
        \pgfmathsetmacro{\yCrel}{\lnyC-\ymin)/(\ymax-\ymin)} % THE IMPROVED EXPRESSION WITHOUT 'DIMENSION TOO LARGE' ERROR.

        % Define coordinates for \draw. MIND THE 'rel axis cs' as opposed to the 'axis cs'.
        \coordinate (A) at (rel axis cs:\xArel,\yArel);
        \coordinate (B) at (rel axis cs:\xBrel,\yBrel);
        \coordinate (C) at (rel axis cs:\xCrel,\yCrel);

        % Draw slope triangle.
        \draw[#5]   (A)-- node[pos=0.5,anchor=north] {1}
                    (B)-- 
                    (C)-- node[pos=0.5,anchor=east] {#6}
                    cycle;
    }
}
\title{Non-symmetric isogeometric FEM-BEM couplings}
\author{Mehdi Elasmi}
\address{TU Darmstadt, Institute for Accelerator Science and Electromagnetic Fields, Schlossgartenstra\ss{}e 8, 64289 Darmstadt, Germany}
\email{elasmi@gsc.tu-darmstadt.de}
\author{Christoph Erath}
\address{TU Darmstadt, Department of Mathematics, Dolivostra\ss{}e 15, 64293 Darmstadt, Germany}
\email{erath@mathematik.tu-darmstadt.de}
\author{Stefan Kurz}
\address{TU Darmstadt, Institute for Accelerator Science and Electromagnetic Fields, Schlossgartenstra\ss{}e 8, 64289 Darmstadt, Germany}
\email{kurz@gsc.tu-darmstadt.de}
\thanks{M. Elasmi (corresponding author): TU Darmstadt, Germany; elasmi@gsc.tu-darmstadt.de;
The research of this author was supported in parts by the \emph{Excellence Initiative} 
  of the German Federal and State Governments 
  and the \emph{Graduate School CE} within the \emph{Centre for Computational Engineering} at Technische Universit\"at Darmstadt.}
\thanks{C. Erath: TU Darmstadt, Germany; erath@mathematik.tu-darmstadt.de}
\thanks{S. Kurz: TU Darmstadt, Germany; kurz@gsc.tu-darmstadt.de}
\date{\textbf{\today}}
\DeclareMathOperator{\Div}{div}
\DeclareMathOperator{\curl}{curl}
\DeclareMathOperator{\Span}{span}
\DeclareMathOperator{\Diam}{diam}
\newcommand{\Id}{{\operatorname{Id}}}
\newcommand{\opd}{{\,\operatorname{d}}}
\newcommand{\norm}[1]{{\left\lVert #1 \right\rVert}}
\newcommand{\sphp}{{H^1(\Omega)}}
\newcommand{\splp}{{L^2(\Omega)}}
\newcommand{\spht}{{H^{\frac{1}{2}}(\Gamma)}}
\newcommand{\spdht}{{H^{-\frac{1}{2}}(\Gamma)}}
\newcommand{\scalar}[1]{{\left\langle #1 \right\rangle}}
\newcommand{\R}{{\mathbb R}}
\newcommand{\vertiii}[1]{{\left\vert\kern-0.25ex\left\vert\kern-0.25ex\left\vert #1 
    \right\vert\kern-0.25ex\right\vert\kern-0.25ex\right\vert}}
\begin{document}
\maketitle

\begin{abstract}
We present a coupling of the Finite Element and the Boundary Element Method in an isogeometric framework 
to approximate either two-dimensional Laplace interface problems or 
boundary value problems consisting in two disjoint domains. 
We consider the Finite Element Method in the bounded domains to simulate possibly non-linear materials. 
The Boundary Element Method is applied in unbounded or thin domains where the material behavior is linear.
The isogeometric framework allows to combine different design and analysis tools: first, we consider the same type of NURBS 
parameterizations for an exact geometry representation and second, we use the numerical analysis for the Galerkin approximation. 
Moreover, it facilitates to perform $h$- and $p$-refinements. 
For the sake of analysis, we consider the framework of strongly monotone and Lipschitz continuous operators to ensure well-posedness of the coupled system. 
Furthermore, we provide an a~priori error estimate. We additionally show an improved convergence behavior for the errors in functionals 
of the solution that may 
double the rate under certain assumptions. Numerical examples conclude the work which illustrate the theoretical results. \bigskip
 \\ 
\noindent \textbf{Keywords.} Finite Element Method, Boundary Element Method, non-symmetric coupling, Isogeometric Analysis, non-linear operators, Laplacian interface problem, Boundary Value Problems, multiple domains, well-posedness, a~priori estimate, super-convergence, electromagnetics, electric machines \bigskip \\
\noindent \textbf{Mathematics Subject Classification.} 65N12, 65N30, 65N38, 78M10, 78M15
\end{abstract}
\section{Introduction and preliminaries}
\label{sec:introduction}
In the last decades, simulation gained more and more importance as a forth pillar of sciences besides theory, experiments, and observations. 
A successful simulation means a good imitation of some phenomena. This allows the analysis, optimization, and predictions to be made ad-hoc 
with a certain reliability, which depends on the application. For example, this can be achieved for problems that are formulated as boundary and initial 
value problems by choosing the right mathematical model, a good representation of the computational domain, and a suitable numerical method. \\
We encounter in this work two types of model problems. First, we consider a Laplacian interface problem in Section \ref{sec:interfaceProb}. 
Its specificity lies in the combination of a possibly non-linear and non-homogeneous problem in a bounded domain
with a linear and homogeneous 
problem in an unbounded domain. This type of model describes a wide class of engineering and physical applications. One example 
for electromagnetic scattering problems and elastostatics can be found in \cite{steinbach2011note}. 
We address the second type of model in Section \ref{sec:twoDomainProblem}. It is a Boundary Value Problem (BVP) with two 
disjoint domains, which are separated by a (thin) gap. 
In the domains we allow non-linear equations. However, the gap is assumed to be filled with a linear material, 
where the simplest form is air. For a visualization we refer to Figure \ref{fig:2domains}. 
This model is particularly used for the simulation of electro-mechanical energy converters. An example is an electric machine discussed in \cite{zeger}. 
In general, the air gap is very thin. The other two domains are modeled separately for a facilitation of a possible rotation of the 
interior part, called rotor in case of an electric machine. This movement is induced by the interaction of electromagnetic fields in the air gap. The computation of 
forces and torques are therefore one central goal in this type of simulations. Formally, this can be achieved by using the so called 
Maxwell Stress Tensor (MST) method, see, e.g., \cite{kurzMotor}. For this, the solution in the air gap as well as its derivatives are needed. 
These aspects have to be kept in mind for a suitable choice of a numerical method. \\
The coupling of the Finite Element Method (FEM) and the Boundary Element Methods (BEM) appears to be an intuitive and straightforward choice for the above described problems. 
Indeed, the FEM is well established and widely used for possibly non-linear problems in bounded domains. 
On the other side the BEM relies on the transfer of the model problem to an integral representation. 
Further steps then lead to a Galerkin discretization problem on
its boundary with certain integral operators. 
In a post-processing step, a solution can be found in every point of the underlying domain. Hence, BEM is suitable to handle problems
with an unbounded domain where we do not have to truncate the domain since the discretization itself is done on the boundary. 
We remark that a truncation would be mandatory if we would apply FEM. 
Since the BEM discretization takes place on a boundary of the domain, it is also very attractive to get a solution in the thin gap described above. 
For a mesh-based method in the whole domain, e.g.,
like FEM, it is very difficult to find a mesh for such a thin gap, where the numerical method remains stable. 
The discretization on the boundary with BEM and the post-processing afterwards avoids this problem.
However, to apply the BEM we need to know the fundamental solution of the underlying problem. Note that this restricts the application of BEM especially for non-linear problems.
Therefore, we apply BEM in this work for two different applications: first in the exterior unbounded domain and second in the thin air gap. 
In both cases we consider the Laplace operator for the BEM part, where the fundamental solution can be given explicitly. 

In the literature, we distinguish several types of FEM-BEM coupling techniques. 
These coupling procedures differ solely in the considered representation of the Boundary Integral Equations (BIE), which are the basis for BEM. 
In order to introduce briefly the considered BIEs, we envisage first the following Laplace equation
\begin{equation} \label{laplace}
-\Delta u = 0\quad \text{in } \Omega^\kappa,\quad\kappa=0,1,
\end{equation}
where $\Omega^0\subseteq\R^2$ is a bounded domain with Lipschitz boundary $\Gamma$, 
and $\Omega^1=\R^2\backslash\overline{\Omega^0}$ is the corresponding unbounded domain. 
Hence, $\Gamma=\overline{\Omega^0}\cap\overline{\Omega^1}$. Note that \eqref{laplace} is an interior problem for $\kappa=0$
and an exterior problem for $\kappa=1$. In the latter case, we additionally assume the radiation condition
$u(x)=C_{\infty}\log|x|+\mathcal{O}(1/|x|)$ for $|x|\to\infty$ with the unknown constant $C_\infty$, see also Remark~\ref{rem:radiation}. 
For some $x \in \Omega^\kappa$, the solution $u(x)$ is given by the representation formula
\begin{equation} \label{repFormula_intro}
u(x) = (-1)^\kappa \left( \int_\Gamma G(x,y) \phi(y) \opd \sigma_y - \int_\Gamma \partial_{\nu(y)} G(x,y) u_{\vert\Gamma}(y) \opd \sigma_y \right),
\end{equation}  
where 
$G(x,y) = -\frac{1}{2\pi}\log{\vert x-y \vert}$ 
denotes the fundamental solution of the Laplace operator, $\nu(y)$ is an outer normal vector on $\Gamma$ 
pointing outward with respect to $\Omega^0$ at $y \in \Gamma$, 
and $\left(u_{\vert\Gamma},\phi := \partial_\nu u_{\vert\Gamma} \right)$ are the unknown or partially unknown Cauchy data. 
Hereby, the notation $u_{\vert\Gamma}$ means the trace of $u$ with respect to $\Gamma$. Note that we omit to write the trace operators  
in this work due to readability. 
Taking the trace of the representation formula yield to the following BIEs, see, e.g., 
\cite[Chapter~7]{steinbach} for more details,
\begin{equation} 
u_{\vert\Gamma} =  (-1)^\kappa \left( \left( \frac{(-1)^\kappa}{2} -  \mathcal{K} \right) u_{\vert\Gamma}  + \mathcal{V} \phi \right). \label{direct1}%\\
% \phi &=  (-1)^\kappa  \left( \left( \frac{(-1)^\kappa}{2} +  \mathcal{K}^\dag \right) \phi + \mathcal{W} u_{\vert\Gamma} \right).\label{direct2}
\end{equation}
The invoked Boundary Integral Operators (BIO), the single layer operator $\mathcal{V}$ and the double layer operator $\mathcal {K}$, are given for smooth
enough inputs by
\begin{equation}
 \label{eq:operators}
(\mathcal{V}\phi)(x) =  \int_\Gamma G(x,y) \phi(y) \opd \sigma_y , \quad
(\mathcal{K}u_{\vert\Gamma})(x) =  \int_\Gamma \partial_{\nu(y)} G(x,y) u_{\vert\Gamma}(y) \opd \sigma_y ,%\\
% \mathcal{K}^\dag\phi(x) &=  \int_\Gamma \partial_{\nu(x)} G(x,y) \phi(y) \opd \sigma_y ,\quad\mathcal{W} u_{\vert\Gamma}(x) = -\partial_{\nu(x)} \int_\Gamma \partial_{\nu(y)} G(x,y) u_{\vert\Gamma}(y) \opd \sigma_y ,
\end{equation}
and can be extended continuously to linear and bounded operators such that
\begin{equation*}
\mathcal{V}:\, H^{s-\frac{1}{2}} (\Gamma) \rightarrow H^{s+\frac{1}{2}} (\Gamma) , \quad 
\mathcal{K}:  H^{s+\frac{1}{2}} (\Gamma) \rightarrow H^{s+\frac{1}{2}} (\Gamma)  ,%\\
% \mathcal{K}^\dag:&\,  H^{s-\frac{1}{2}} (\Gamma) \rightarrow H^{s-\frac{1}{2}} (\Gamma)  ,\quad\mathcal{W}:  H^{s+\frac{1}{2}} (\Gamma) \rightarrow H^{s-\frac{1}{2}} (\Gamma)  ,
\end{equation*}
for $s \in [-\frac{1}{2},\frac{1}{2}]$, c.f. \cite[Theorem~1]{costabel}. In particular, the boundary integral operator $\mathcal{V}$ 
is additionally symmetric, and $\spdht$-elliptic, if $\Diam\left(\Omega\right) <1$, 
see, e.g., \cite[Theorem~6.23]{steinbach}. 
The properties of $\mathcal{V}$ induce the norm equivalence
\begin{equation}
\norm{\psi}^2_\mathcal{V} := \scalar{\psi,\, \mathcal{V}\psi}  \simeq \norm{\psi}^2_\spdht. \label{normV}
\end{equation}
In the previous lines, 
the mentioned spaces have to be understood as follows: for $k > 0$, $H^k(\cdot)$ denotes the standard Sobolev space 
equipped with the usual norm $\Vert \cdot \Vert_{H^k(\cdot)}$. Moreover, the space $H^{k-\frac{1}{2}}(\Gamma)$ is 
the trace space of $H^k(\Omega)$, and spaces with negative exponents $H^{-k}(\Gamma)$ are defined as dual spaces of $H^k(\cdot)$ 
using the natural duality pairing $\left\langle \cdot, \cdot \right\rangle_\Gamma$, which is obtained by the extended 
$L^2$-scalar product $\left( \cdot,\cdot\right)_\Gamma$. 
Furthermore, for the unbounded domain $\Omega^1$ we need functions with local behavior and denote them by 
$H_\mathrm{loc}^1(\Omega^1):=\{v:\Omega^1\to\mathbb{R}\big|\, v_{|K}\in H^1(K) \text{ for all }K\subset\overline{\Omega^1}\text{ compact}\}$. 
Finally, we write ${H^1(\Omega)}^\prime$ for the dual space of $H^1(\Omega)$.
We recall that 
\begin{equation}\label{traceIneq}
\scalar{\psi,v_{\vert\Gamma}}_\Gamma \leq \Vert \psi \Vert_\spdht \Vert v_{\vert\Gamma}\Vert_\spht \leq C_\mathrm{tr} \Vert \psi \Vert_\spdht \Vert v \Vert_{H^1(\Omega)},
\end{equation}
holds for all $v \in H^1(\Omega)$ and $ \psi \in \spdht $, where the trace inequality is encoded with the trace constant $C_\mathrm{tr}>0$. 
For some bounded domain $\Omega$, $\left( \cdot,\cdot\right)_\Omega$ denotes equivalently the standard
$L^2$-scalar product in $\Omega$. 

In the following, we describe a variational ansatz to get a weak form of the model problem. As mentioned above, there
are several coupling strategies possible.
If we describe the FEM part by the weak form of the well-known Green's first formula, a coupling  
with the weak form of \eqref{direct1} ($\kappa=1$) leads to the so called Johnson-N\'ed\'elec coupling introduced in \cite{johnson1980coupling}. 
The combined weak form is non-symmetric even though the model problem itself is symmetric. 
Also a Galerkin discretization leads to a non-symmetric
system of linear equations. Therefore, this coupling is also known as non-symmetric coupling, where the unknowns are
the $u$ of the FEM part and the conormal derivate $\phi$ of the BEM part.
To symmetrize this system in case of a symmetric model problem, we first observe that taking the conormal derivative of \eqref{repFormula_intro}
leads to another integral equation with two other integral operators. 
A modification of the Johnson-N\'ed\'elec coupling with this additional integral equation 
renders the coupled problem symmetric. This procedure appeared first in \cite{costabel1988symmetric} 
and is known as Costabel's symmetric coupling. The price of the symmetry is the use of four BIOs, which is 
computationally more expensive. However, there are still only two unknowns involved.
A coupling method with three unknowns, i.e., additionally the trace $u_{|\Gamma}$ of the BEM part is an unknown, 
is called a three field coupling~\cite{Erath:2012-1}.
A coupling procedure with the so called indirect ansatz is also possible and is called Bielak-MacCamy coupling \cite{bielak1983exterior}. 
With this strategy, however, one unknown of the BEM part has no physical meaning. \\
Because of the advantages of the non-symmetric coupling, we consider in this work only this type of coupling. We will introduce
it formally in Section~\ref{sec:interfaceProb}.
For a long time a mathematical analysis for this coupling was only available for smooth boundaries due to the use of a compactness
argument of the double layer operator $\mathcal{K}$; \cite{johnson1980coupling}. In particular, 
Lipschitz boundaries were excluded . 
However, a decade ago Sayas \cite{sayas2009validity} 
provided in fact the first analysis also for Lipschitz boundaries.
This work influenced several variations and improvements, e.g., \cite{steinbach2011note,aurada2013coupling,erath2017} to mention a few but not all.
Hence, the non-symmetric coupling became a more natural choice, especially, if a part of the model problem is non-symmetric or non-linear. 
In this work, we use the results of \cite{aurada2013coupling}, 
where an extension to non-linear interface problems has been addressed and combine this result with the proof shown in \cite{erath2017}. 
Furthermore, we also use \cite{of2014ellipticity}, 
which extended the proofs for the linear interface problem to certain Boundary Value Problems, i.e, to the second type of
problem considered here. 
For the linear interface problem with a general second order problem in the interior domain, 
we also refer to \cite{erath2017} for a rigorous and, to the authors knowledge, sharpest ellipticity estimate. 
Recently, a complete analysis of a parabolic-elliptic interface problem with a full discretization in the sense of a non-symmetric  
FEM-BEM coupling for spatial discretization 
was published in \cite{Erath:2018-1}. Note that such a system arises, for instance, in the modeling of eddy
currents in the magneto-quasi-static regime \cite{MacCamy:1987}.

Now, having described the weak form of the model problem with the proposed FEM and BEM parts, 
we still need to take two major decisions for a successful simulation: 
a suitable discretization technique, i.e., choosing concrete ansatz spaces for the FEM and BEM, and a good representation of the geometry. 
These steps are typically made independently, which complicates meshing and remeshing procedures 
without altering the original geometry. In order to circumvent this, design step and numerical analysis can be combined by considering the same type of basis 
functions. Hence the geometrical modeling is also used to design ansatz functions in the Galerkin discretization schemes for the approximation of the solution. 
Such a method is proposed in \cite{HUGHES20054135,cottrell2009isogeometric}. It is based on using Non-Uniform Rational B-Splines (NURBS) for the 
unification of Computer Aided Design (CAD) and Finite Element Analysis (FEA). This method is called IsoGeometric Analysis (IGA). 
The first isogeometric BEM simulation of collocation type can be found in~\cite{Costas2009,SCOTT2013197}.
Moreover, fast methods for isogeometric BEM have been successfully implemented 
in \cite{Habrecht2010,MARUSSIG2015458,doelz2019isogeometric}, which reduces the known high computational complexity
of such an application due to the dense matrices produced by the BEM. 
This makes the method more attractive even for more realistic and complex applications, see, e.g., \cite{corno2016isogeometric} and \cite{zeger}. 
A rigorous mathematical analysis for isogemetric FEM started in \cite{Bazilevs2006,da2014mathematical} and for isogemetric Galerkin BEM 
in \cite{GantnerGregor2014AiB,FEISCHL2015362,FEISCHL2016141,GantnerGregor2017Oafs,FUHRER2019571}.
For our purpose the results \cite{da2014mathematical,buffa2020multipatch} 
together with \cite{aurada2013coupling,erath2017,of2014ellipticity} play a central role in proving the validity and an
a~priori error estimate of the FEM-BEM coupling in the isogeometric context, which is done in this manuscript
for the first time.  \\
The rest of this paper is organized as follows: 
in Section \ref{sec:interfaceProb}, the non-linear interface problem is addressed. 
We consider the framework of Lipschitz continuous and strongly monotone operators such as given in \cite{zeidler} and used in \cite{aurada2013coupling}.
Strong monotonicity of the non-symmetric weak form is showed equivalently to \cite{erath2017} by adapting the setting to non-linear operators. 
Moreover, well-posedness of the coupling is stated. Section \ref{sec:iga} is devoted to the Galerkin discretization of the non-symmetric coupling. 
Thereby, we introduce the isogeometric framework and the necessary discrete spaces.
We derive some error estimates for the conforming isogeometric discretization. In Section \ref{sec:twoDomainProblem}, 
we extend the model to a Boundary Value Problem. More precisely, the model domain is split in two disjoint domains, which are separated by a thin (air) gap. 
First, a variational formulation of the coupled problem is derived. Then we show well-posedness and stability of the method.
Furthermore, we discuss a super-convergence result for the evaluation of the solution in the BEM domain. 
In the last Section \ref{sec:results}, we confirm the theoretical results by conducting one numerical example for each model problem. 
The work is completed by some conclusions and an outlook.

\section{Interface problem} \label{sec:interfaceProb}
Let $\Omega \subset \mathbb{R}^2$ be a bounded domain with Lipschitz boundary $\Gamma = \partial \Omega$
and $\Omega^\mathrm{e} := \R^2 \backslash \overline{\Omega}$ the corresponding unbounded (exterior) domain.
Furthermore, to guarantee the $\spdht$-ellipticity of the boundary integral operator 
$\mathcal{V}$, we assume $\Diam\left(\Omega\right) <1$. This assumption can merely be achieved by scaling. We consider the following interface problem:
Find $(u,u^\mathrm{e}) \in H^1(\Omega) \times H_\mathrm{loc}^1(\Omega^\mathrm{e})$ such that
\begin{subequations}\label{classProblem}
\begin{align} 
- \Div \left( \mathcal{U} \nabla u \right) &= f &&\text{in }\Omega,\label{intLaplace}\\
- \Delta u^\mathrm{e} &= 0  &&\text{in }\Omega^\mathrm{e},\label{extLaplace} \\
{ u}_{\vert\Gamma} - {u^\mathrm{e}}_{\vert\Gamma} &= u_0&&\text{on } \Gamma,\label{dirJump} \\ 
\mathcal{U}{\nabla u}_{\vert\Gamma} \cdot \nu - {\nabla u^\mathrm{e}}_{\vert\Gamma} \cdot \nu &= \phi_0 &&\text{on } \Gamma,\label{neuJump} \\ 
u^\mathrm{e} &= \mathcal{O}\left( \vert x \vert^{-1} \right) && \text{for } \vert x \vert \rightarrow \infty.\label{radCondition}
%u^\mathrm{e} &=c_\infty \log\vert x \vert + \mathcal{O}\left( \vert x \vert^{-1} \right) && \text{for } \vert x \vert \rightarrow \infty,\label{radCondition2}
\end{align}
\end{subequations}
We remind that $\nu$ denotes the outer normal vector with respect 
to $\Omega$ and $\mathcal{U}: \, \R^2 \rightarrow \R^2$ is a possibly non-linear diffusion tensor. The right-hand side is given 
by $f \in {H^1(\Omega)}^\prime$, $u_0 \in \spht$ is 
the jump in the Dirichlet data, and $\phi_0 \in \spdht$ the jump in the Neumann data. \\
To ensure the right radiation condition \eqref{radCondition} at infinity, we have to assume the additional condition
\begin{equation*}
\scalar{{\nabla u^\mathrm{e}}_{\vert\Gamma} \cdot \nu,1}_\Gamma = 0.
\end{equation*}
This can be transformed into a compatibility condition on the data, i.e.,
\begin{equation*}
\left(f,1\right)_\Omega + \scalar{\phi_0,1}_\Gamma = 0. 
\end{equation*}
\begin{remark}
 \label{rem:radiation}
Note that the assumption to ensure the radiation condition is only needed in the two dimensional case.
Alternatively, \eqref{radCondition} can be replaced by a logarithmic decay of the solution in two dimensions
to avoid the additional assumption on ${\nabla u^\mathrm{e}}_{\vert\Gamma}$, i.e.,
\begin{equation*}
u^\mathrm{e} = C \log\vert x \vert + \mathcal{O}\left( \vert x \vert^{-1} \right), \quad \text{ for } \vert x \vert \rightarrow \infty,
\end{equation*}
with $C:= \frac{1}{2\pi}\scalar{{\nabla u^\mathrm{e}}_{\vert\Gamma} \cdot \nu,1}_\Gamma$ or equivalently
$C := -\frac{1}{2\pi}\left( \left(f,1\right)_\Omega + \scalar{\phi_0,1}_\Gamma \right)$, which can be easily verified in the weak
formulation below.
\end{remark}
As mentioned above, the diffusion tensor $\mathcal{U}$ can be a non-linear operator. 
To apply standard theory for non-linear operators, see, e.g., \cite{zeidler},
we assume throughout the manuscript that $\mathcal{U}$ is Lipschitz continuous and strongly monotone: 
\begin{enumerate}[label=(A\arabic*)]
\item \label{A1} Lipschitz continuity: 
\[ \exists \, C^\mathcal{U}_\mathrm{Lip}>0  \text{ such that } \vert \mathcal{U}x - \mathcal{U}y \vert \leq C^\mathcal{U}_\mathrm{Lip}\, \vert x - y\vert, \, \forall x,y \in \R^2.\]
\item \label{A2} strong monotonicity: \[ \exists \, C^\mathcal{U}_\mathrm{ell}>0 \text{ such that }  
\left( \mathcal{U} \nabla u - \mathcal{U} \nabla v, \nabla u - \nabla v \right)_\Omega \geq C^\mathcal{U}_\mathrm{ell}\, \norm{\nabla u - \nabla v}^2_\splp, 
\,\forall u,v \in \sphp .\]
\end{enumerate}
The derivation of a non-symmetric variational form follows a standard procedure: 
In the variational form of~\eqref{intLaplace}, we replace the Neumann data by the jump 
condition~\eqref{neuJump} to couple the interior problem with the conormal derivative  
with $\phi := \partial_\nu u^\mathrm{e}_{\vert\Gamma}=\nabla u^\mathrm{e}_{\vert\Gamma}\cdot\nu$ of the exterior problem. 
For the second equation we use the exterior integral equation~\eqref{direct1} with $\kappa=1$,
and insert the jump condition \eqref{dirJump} 
to couple this with the interior trace.\\
Hence, the weak formulation of the non-symmetric coupling problem reads:
Find $\mathbf{u} = (u, \phi) \in H^1(\Omega) \times \spdht$ such that
\begin{align*} 
\left( \mathcal{U}\nabla u, \nabla v \right)_\Omega - \left\langle \phi, v_{\vert\Gamma} \right\rangle_\Gamma &= \left( f, v \right)_\Omega + \left\langle \phi_0, v_{\vert\Gamma} \right\rangle_\Gamma, \\
\left\langle \psi, \left( \frac{1}{2} - \mathcal{K}  \right) u_{\vert\Gamma} \right\rangle_\Gamma + \left\langle \psi, \mathcal{V} \phi \right\rangle_\Gamma &= \left\langle \psi \left( \frac{1}{2} - \mathcal{K}  \right) u_0 \right\rangle_\Gamma
\end{align*}
holds $\forall \mathbf{v} = \left( v, \psi \right) \in H^1(\Omega) \times \spdht$.\\
This variational form can be written in a compact form.
For this we introduce a product space with corresponding norm, i.e.,
\begin{equation}
 \label{eq:spaceH}
 \mathcal{H}:= H^1(\Omega) \times \spdht,\qquad
 \norm{\mathbf{v}}_{\mathcal{H}}:=\big(\norm{v}_{H^1(\Omega)}^2+\norm{\psi}_{\spdht}^2\big)^{\frac{1}{2}} \text{ for } \mathbf{v}=(v,\psi)\in\mathcal{H}.
\end{equation}

%%%%
\begin{problem} \label{problemJN}
Find $\mathbf{u} \in \mathcal{H}:= H^1(\Omega) \times \spdht$ such that $a(\mathbf{u},\mathbf{v}) = \ell(\mathbf{v})$ holds $\forall \mathbf{v} \in \mathcal{H}$
with the linear form (linear in the second argument) $a:\mathcal{H}\times\mathcal{H}\to \mathbb{R}$,
\begin{equation}
a(\mathbf{u},\mathbf{v}) :=  \left(\mathcal{U}\nabla u, \nabla v\right)_\Omega - \scalar{\phi,v_{\vert\Gamma}}_\Gamma + \scalar{\psi,
\left( \frac{1}{2}-\mathcal{K}\right)u_{\vert\Gamma}}_\Gamma + \scalar{\psi,\mathcal{V}\phi}_\Gamma,\label{linearformproblem}
\end{equation}
and the linear functional $\ell$ on $\mathcal{H}$,
\begin{equation}
\ell(\mathbf{v}) := \left( f, v \right)_\Omega + \left\langle \phi_0, v_{\vert\Gamma} \right\rangle_\Gamma + 
\left\langle \psi, \left( \frac{1}{2} - \mathcal{K}  \right) u_0 \right\rangle_\Gamma.\label{functionalproblem}
\end{equation}
\end{problem}
%%%%
It is easy to check that $a(\mathbf{v},\mathbf{v})$ is not elliptic, e.g., insert $\mathbf{v} = (1,0)$. 
Hence, \cite{aurada2013coupling} suggested an implicit stabilization where the stabilized problem is equivalent to the original one, i.e., a solution of the original
problem is also a solution of the stabilized one and vice versa.
Thus, the analysis is done with the aid of the stabilized form, i.e., well-posedness is inherited to the original problem.
For implementation purposes, we still use the original problem. 
The stabilized problem reads:
\begin{problem}\label{stabProblem}
Find $\mathbf{u} \in \mathcal{H}$ such that $\widetilde{a}(\mathbf{u},\mathbf{v}) = \widetilde{\ell}(\mathbf{v})$ holds $\forall \mathbf{v} \in \mathcal{H}$,
where we define with
\begin{equation*}
s(\mathbf{v}) := \scalar{1,\left( \frac{1}{2}-\mathcal{K}\right)v_{\vert\Gamma}}_\Gamma + \scalar{1,\mathcal{V}\psi}_\Gamma, \qquad \mathbf{v}=(v,\psi)
\end{equation*}
the stabilized linear form
\begin{equation*}
\widetilde{a}(\mathbf{u},\mathbf{v}):= a(\mathbf{u},\mathbf{v}) + s(\mathbf{u})s(\mathbf{v}),\label{stabBilinearForm}
\end{equation*}
and the functional
\begin{equation*}
\widetilde{\ell}(\mathbf{v}) := \ell(\mathbf{v}) + \scalar{1,\left( \frac{1}{2}-\mathcal{K}\right)u_0}_\Gamma s(\mathbf{v}).\label{stabfunctional}
\end{equation*}
\end{problem}
\begin{lemma}[\cite{aurada2013coupling}]\label{equivalenceFormulation}
The original and the stabilized formulation are equivalent, i.e., $\mathbf{u} \in \mathcal{H}$ solves Problem \ref{problemJN} 
if and only if it solves Problem \ref{stabProblem}, and vice versa.
\end{lemma}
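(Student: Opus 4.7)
The plan is to reduce both implications to a single scalar identity, namely
\[
s(\mathbf{u}) \;=\; \scalar{1,\left(\tfrac{1}{2}-\mathcal{K}\right)u_0}_\Gamma.
\]
Once this identity is established, the stabilization summand $s(\mathbf{u})s(\mathbf{v})$ on the left coincides with the stabilization summand $\scalar{1,(\tfrac{1}{2}-\mathcal{K})u_0}_\Gamma\, s(\mathbf{v})$ on the right for every $\mathbf{v}\in\mathcal{H}$, so that Problem~\ref{problemJN} and Problem~\ref{stabProblem} agree up to a quantity that appears identically on both sides and are therefore equivalent.

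For the direction ``Problem~\ref{problemJN} $\Rightarrow$ Problem~\ref{stabProblem}'', I would simply insert the specific test function $\mathbf{v}=(0,1)\in\mathcal{H}$ into the equation $a(\mathbf{u},\mathbf{v})=\ell(\mathbf{v})$. Since the first component $v$ vanishes, the volume term $(\mathcal{U}\nabla u,\nabla v)_\Omega$ and the pairing $\scalar{\phi,v_{\vert\Gamma}}_\Gamma$ drop out, so the left-hand side collapses exactly to $s(\mathbf{u})$ and the right-hand side to $\scalar{1,(\tfrac{1}{2}-\mathcal{K})u_0}_\Gamma$. This hands us the key identity for free. A direct substitution into the definitions of $\widetilde a$ and $\widetilde\ell$ then yields $\widetilde a(\mathbf{u},\mathbf{v})=\widetilde\ell(\mathbf{v})$ for every $\mathbf{v}\in\mathcal{H}$.

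For the converse implication, I would again test the stabilized problem with the same function $\mathbf{v}=(0,1)$. Using $s((0,1))=\scalar{1,\mathcal{V}\cdot 1}_\Gamma$, a short calculation collects terms into
\[
s(\mathbf{u})\bigl(1+\scalar{1,\mathcal{V}\cdot 1}_\Gamma\bigr) \;=\; \scalar{1,\left(\tfrac{1}{2}-\mathcal{K}\right)u_0}_\Gamma\bigl(1+\scalar{1,\mathcal{V}\cdot 1}_\Gamma\bigr).
\]
At this point I would invoke the $\spdht$-ellipticity of $\mathcal{V}$, which is guaranteed by the standing assumption $\Diam(\Omega)<1$ and encoded in the norm equivalence~\eqref{normV}, to conclude that $\scalar{1,\mathcal{V}\cdot 1}_\Gamma>0$. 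Consequently the factor $1+\scalar{1,\mathcal{V}\cdot 1}_\Gamma$ is strictly positive and may be cancelled, recovering the same key identity. Subtracting $s(\mathbf{u})s(\mathbf{v})$ from both sides of $\widetilde a(\mathbf{u},\mathbf{v})=\widetilde\ell(\mathbf{v})$ then produces $a(\mathbf{u},\mathbf{v})=\ell(\mathbf{v})$ for every $\mathbf{v}\in\mathcal{H}$.

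I expect no genuine obstacle; the only step that requires care is the cancellation by $1+\scalar{1,\mathcal{V}\cdot 1}_\Gamma$ in the reverse direction, which is precisely the point at which the $\spdht$-ellipticity of the single layer operator (and thereby the scaling hypothesis on $\Omega$) enters the proof. All remaining steps are routine substitutions into the definitions of $s$, $\widetilde a$, and $\widetilde\ell$.
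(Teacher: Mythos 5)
Your proof is correct and is essentially the standard argument from the cited reference \cite{aurada2013coupling} (the paper itself states this lemma without proof): testing with $\mathbf{v}=(0,1)$ to extract the identity $s(\mathbf{u})=\scalar{1,(\tfrac{1}{2}-\mathcal{K})u_0}_\Gamma$ in both directions, with the $\spdht$-ellipticity of $\mathcal{V}$ guaranteeing $1+\scalar{1,\mathcal{V}1}_\Gamma>0$ so the cancellation in the reverse direction is legitimate. No gaps.
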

In order to state well-posedness for Problem \ref{stabProblem}, and thanks to Lemma \ref{equivalenceFormulation}
also for Problem \ref{problemJN}, we follow standard results for monotone operatos~\cite{zeidler}.
First, we note that
the form $\widetilde{a}(\mathbf{u},\mathbf{v})$ induces a non-linear operator $\widetilde{\mathcal{A}}: \mathcal{H} \rightarrow \mathcal{H}^\prime$ by 
\begin{equation}\label{inducedOperator}
\scalar{\widetilde{\mathcal{A}} (\mathbf{u}),\mathbf{v}} := \widetilde{a}(\mathbf{u},\mathbf{v}),\quad \forall \mathbf{u},\mathbf{v} \in \mathcal{H},
\end{equation}
where $\mathcal{H}^\prime$ denotes the dual space of $\mathcal{H}$. 
This allows us to prove the following lemma.
\begin{lemma}[\cite{aurada2013coupling,erath2017}] \label{lipschitzmonotone}
Let us consider the non-linear operator $\widetilde{\mathcal{A}}: \mathcal{H} \rightarrow \mathcal{H}^\prime$ defined in \eqref{inducedOperator}
with $\mathcal{H}= H^1(\Omega) \times \spdht$.
% Under the assumptions \ref{A1} and \ref{A2} hold.
The following assertions hold. 
\begin{itemize}
 \item $\widetilde{\mathcal{A}}$ is Lipschitz continuous, i.e., 
there exists $C_\mathrm{Lip} > 0$ such that
\begin{equation*}
 \norm{\widetilde{\mathcal{A}} (\mathbf{u})-\widetilde{\mathcal{A}} (\mathbf{v})}_{\mathcal{H}^\prime} \leq C_\mathrm{Lip} \norm{\mathbf{u}-\mathbf{v}}_\mathcal{H},
 \end{equation*}
for all $\mathbf{u}$, $\mathbf{v} \in \mathcal{H}$.
\item if $C^\mathcal{U}_\mathrm{ell}> \frac{1}{4}$,  there holds that
\begin{equation}\label{stability2.5}
\scalar{\widetilde{\mathcal{A}} (\mathbf{u})-\widetilde{\mathcal{A}} (\mathbf{v}),\mathbf{u}-\mathbf{v}} \geq C_\mathrm{stab} \left( \norm{\nabla u - \nabla v}_\splp^2 + \norm{\phi - \psi}^2_\mathcal{V}+ s(\mathbf{\mathbf{u} - \mathbf{v}})^2 \right),
\end{equation}
for all $\mathbf{u}=(u,\phi)\in \mathcal{H}$, $\mathbf{v}=(v,\psi) \in \mathcal{H}$ 
with the norm $\norm{\psi}^2_\mathcal{V} := \scalar{\psi,\, \mathcal{V}\psi}$ and
with \[C_\mathrm{stab} = \min\left\lbrace 1, \frac{1}{2}\left( 1 +  C^\mathcal{U}_\mathrm{ell} - \sqrt{ \left(C^\mathcal{U}_\mathrm{ell}-1\right)^2 + 1 }\right)\right\rbrace.
\]
\item if $C^\mathcal{U}_\mathrm{ell}> \frac{1}{4}$, then $\widetilde{\mathcal{A}}$ is strongly monotone , 
i.e., there exists $C_\mathrm{ell} > 0$ such that
\begin{equation*}
 \scalar{\widetilde{\mathcal{A}} (\mathbf{u})-\widetilde{\mathcal{A}} (\mathbf{v}),\mathbf{u}-\mathbf{v}} \geq C_\mathrm{ell} \norm{\mathbf{u}-\mathbf{v}}^2_\mathcal{H}
 \end{equation*} 
for all $\mathbf{u}$, $\mathbf{v} \in \mathcal{H}$.
\end{itemize}
\end{lemma}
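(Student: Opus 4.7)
My plan is to handle the three assertions in order. For all parts I will exploit that only the diffusion term and the product $s(\mathbf{u})\,s(\mathbf{v})$ are non-linear in the first argument; every other summand of $\widetilde{a}$ is bilinear. Writing $\mathbf{w}=\mathbf{u}-\mathbf{v}=(w,\chi)$ and $\mathbf{z}=(z,\xi)\in\mathcal{H}$ this yields
\begin{align*}
\scalar{\widetilde{\mathcal{A}}(\mathbf{u})-\widetilde{\mathcal{A}}(\mathbf{v}),\mathbf{z}}
={}& (\mathcal{U}\nabla u-\mathcal{U}\nabla v,\nabla z)_\Omega
  - \scalar{\chi,z_{\vert\Gamma}}_\Gamma \\
& + \scalar{\xi,(\tfrac{1}{2}-\mathcal{K})w_{\vert\Gamma}}_\Gamma
  + \scalar{\xi,\mathcal{V}\chi}_\Gamma + s(\mathbf{w})\,s(\mathbf{z}),
\end{align*}
which I will use in each of the three steps.

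For Lipschitz continuity I would bound each of the five summands by $C\norm{\mathbf{w}}_\mathcal{H}\norm{\mathbf{z}}_\mathcal{H}$: the diffusion term via \ref{A1} and Cauchy--Schwarz, the three duality pairings via the mapping properties of $\mathcal{V}$ and $\mathcal{K}$ together with the trace inequality \eqref{traceIneq}, and the stabilization product via $|s(\mathbf{w})|\,|s(\mathbf{z})|$ and the continuity of $s$ on $\mathcal{H}$ (which itself follows from the same tools). Taking the supremum over $\norm{\mathbf{z}}_\mathcal{H}=1$ yields $C_\mathrm{Lip}$.

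The heart of the lemma is the stability estimate \eqref{stability2.5}. Setting $\mathbf{z}=\mathbf{w}$, the two pairings involving $w_{\vert\Gamma}$ combine as
\[
-\scalar{\chi,w_{\vert\Gamma}}_\Gamma + \scalar{\chi,(\tfrac{1}{2}-\mathcal{K})w_{\vert\Gamma}}_\Gamma = -\scalar{\chi,(\tfrac{1}{2}+\mathcal{K})w_{\vert\Gamma}}_\Gamma,
\]
while $\scalar{\chi,\mathcal{V}\chi}_\Gamma = \norm{\chi}_\mathcal{V}^{2}$, so \ref{A2} delivers
\[
\scalar{\widetilde{\mathcal{A}}(\mathbf{u})-\widetilde{\mathcal{A}}(\mathbf{v}),\mathbf{w}}
\geq C^\mathcal{U}_\mathrm{ell}\norm{\nabla w}_\splp^{2}
- \scalar{\chi,(\tfrac{1}{2}+\mathcal{K})w_{\vert\Gamma}}_\Gamma
+ \norm{\chi}_\mathcal{V}^{2} + s(\mathbf{w})^{2}.
\]
The non-routine ingredient is the sharp contraction estimate
\[
|\scalar{\chi,(\tfrac{1}{2}+\mathcal{K})w_{\vert\Gamma}}_\Gamma| \leq \norm{\chi}_\mathcal{V}\,\norm{\nabla w}_\splp,
\]
which I would import from the references cited in the lemma. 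The standard proof is a harmonic lifting argument: the interior representation formula \eqref{direct1} with $\kappa=0$ applied to the harmonic extension $\hat{w}\in H^1(\Omega)$ of $w_{\vert\Gamma}$ rewrites $(\tfrac{1}{2}+\mathcal{K})w_{\vert\Gamma}=\mathcal{V}(\partial_\nu\hat{w})$, so Cauchy--Schwarz in the $\mathcal{V}$-inner product reduces the claim to the energy bound $\norm{\partial_\nu\hat{w}}_\mathcal{V}\leq \norm{\nabla w}_\splp$, which follows from a single-layer energy identity combined with the Dirichlet-energy minimality of the harmonic extension. With this in hand, setting $x=\norm{\nabla w}_\splp$ and $y=\norm{\chi}_\mathcal{V}$ turns the lower bound into
\[
\begin{pmatrix}x & y\end{pmatrix}
\begin{pmatrix}C^\mathcal{U}_\mathrm{ell} & -\tfrac{1}{2}\\ -\tfrac{1}{2} & 1\end{pmatrix}
\begin{pmatrix}x\\ y\end{pmatrix} + s(\mathbf{w})^{2}.
\]
The smallest eigenvalue of that symmetric matrix is exactly $(1+C^\mathcal{U}_\mathrm{ell}-\sqrt{(C^\mathcal{U}_\mathrm{ell}-1)^{2}+1})/2$, positive definiteness is equivalent to its determinant $C^\mathcal{U}_\mathrm{ell}-\tfrac{1}{4}$ being positive, and the $\min\{1,\cdot\}$ in $C_\mathrm{stab}$ reflects the unit coefficient of $s(\mathbf{w})^{2}$.

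Strong monotonicity is then obtained by upgrading \eqref{stability2.5} to the full $\mathcal{H}$-norm. The $\mathcal{V}$-norm of $\chi$ is equivalent to $\norm{\chi}_\spdht$ by \eqref{normV}, so it remains to control $\norm{w}_\splp$ in terms of $\norm{\nabla w}_\splp$, $s(\mathbf{w})$, and $\norm{\chi}_\mathcal{V}$. This is achieved by a Poincar\'e--Friedrichs-type inequality $\norm{w}_\splp^{2}\leq C(\norm{\nabla w}_\splp^{2}+|F(w_{\vert\Gamma})|^{2})$ for any continuous linear functional $F$ on $\spht$ that does not vanish on constants; the natural choice $F(w_{\vert\Gamma})=\scalar{1,(\tfrac{1}{2}-\mathcal{K})w_{\vert\Gamma}}_\Gamma = s(\mathbf{w})-\scalar{1,\mathcal{V}\chi}_\Gamma$ splits via Young's inequality into a term controlled by $s(\mathbf{w})^{2}$ and one absorbed into $\norm{\chi}_\mathcal{V}^{2}$. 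The single non-routine step of the whole proof is the sharp contraction estimate in Part~2: all other ingredients are continuity of the boundary integral operators, trace inequalities, or Poincar\'e--Friedrichs, but it is precisely that unit-constant bound that pins down the explicit threshold $C^\mathcal{U}_\mathrm{ell}>\tfrac{1}{4}$ and the stated form of $C_\mathrm{stab}$.
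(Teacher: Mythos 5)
Your proof is correct and follows essentially the same route as the paper, which itself defers the second assertion to the argument of \cite{erath2017} (with the contraction property of $\tfrac{1}{2}+\mathcal{K}$ in the $\mathcal{V}$-induced norms, worst case $C_{\mathcal{K}}=1$, yielding exactly your $2\times 2$ eigenvalue computation and the threshold $C^{\mathcal{U}}_{\mathrm{ell}}>\tfrac14$) and the third to the norm equivalence $\vertiii{\cdot}\simeq\norm{\cdot}_{\mathcal{H}}$ from \cite{aurada2013coupling}. The only cosmetic difference is that for the last step the paper cites a Rellich compactness argument, whereas you reprove the same norm equivalence constructively via a Poincar\'e--Friedrichs inequality with the functional $\scalar{1,(\tfrac12-\mathcal{K})\,\cdot\,}_\Gamma$; both are valid.
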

\begin{proof}
The Lipschitz continuity of $\widetilde{\mathcal{A}}$ follows from the Lipschitz continuity of $\mathcal{U}$, and the continuity of the integral operators.\\
The proof of the second assertion 
follows the lines of \cite[Theorem 1]{erath2017} for $\beta = 1$. We replace  
the coercivity estimate of the bilinear form $\left(\mathcal{U}\nabla u, \nabla v\right)_\Omega$ considered in \cite{erath2017} for a linear $\mathcal{U}$
by the strong monotonicity property of $\mathcal{U}$, i.e, 
\begin{equation*}
\left( \mathcal{U} \nabla u - \mathcal{U} \nabla v, \nabla u - \nabla v  \right)_\Omega \geq C^\mathcal{U}_\mathrm{ell}\, \norm{\nabla u -\nabla v }^2_\splp.
\end{equation*}  
The restriction of $C^\mathcal{U}_\mathrm{ell}$ is a direct result of the use of a contractivity result for the double layer operator $\mathcal{K}$ \cite[Lemma 2.1]{of2013one}
with a constant $C_{\mathcal{K}}\in [\frac{1}{2},1)$, where we use the worst case of $C_{\mathcal{K}}=1$ in the statement.\\
For the last assertion we note the norm equivalence \eqref{normV}, and by a
a Rellich compactness argument it can be shown \cite[Lemma~10]{aurada2013coupling} that
\begin{equation*} 
\vertiii{\mathbf{v}}^2 := \norm{\nabla v}_\splp^2 + \norm{\psi}^2_\mathcal{V}+ s(\mathbf{v})^2 \label{normEq}
\end{equation*}
defines an equivalent norm in $\mathcal{H}$ for some $\mathbf{v} := \left( v, \psi \right) \in \mathcal{H}$. This leads together with \eqref{stability2.5} to the last assertion. 
\end{proof}

The following theorem follows directly from the theoretical result \cite[Theorem~25.B]{zeidler}.
\begin{theorem}[Well-posedness, \cite{zeidler}]\label{dataDep}
Let $C^\mathcal{U}_\mathrm{ell}> \frac{1}{4}$. 
Since the induced operator $\widetilde{\mathcal{A}}$ of $\widetilde a(\cdot,\cdot)$ is strongly monotone and Lipschitz continuous,
there exists 
a unique solution $\mathbf{u}:= (u,\phi) \in \mathcal{H}$  of the variational Problem \ref{stabProblem} 
for any $(f,u_0,\phi_0) \in {H^1(\Omega)}^\prime \times \spht \times \spdht$. Thanks to Lemma~\ref{equivalenceFormulation},
this is also the unique solution of Problem \ref{problemJN}.
\end{theorem}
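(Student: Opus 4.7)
The plan is to reduce the theorem to a direct application of the Browder--Minty / Zarantonello-type result cited as \cite[Theorem~25.B]{zeidler}, whose hypotheses are (i) the underlying space is a real Hilbert space, (ii) the nonlinear operator is strongly monotone, and (iii) the operator is Lipschitz continuous, with the right-hand side a bounded linear functional on the space. Three of these four ingredients are already in hand, so the main task is to organise them cleanly; the nontrivial work has already been carried out in Lemma~\ref{lipschitzmonotone}.

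First, I would verify the functional-analytic setting: the product space $\mathcal{H} = H^1(\Omega) \times \spdht$ equipped with the graph norm in \eqref{eq:spaceH} is a real Hilbert space, as a Cartesian product of two Hilbert spaces. Its dual $\mathcal{H}^\prime$ is likewise Hilbert, and $\widetilde{\mathcal{A}} : \mathcal{H} \to \mathcal{H}^\prime$ is a well-defined nonlinear operator by \eqref{inducedOperator}, since $\widetilde{a}(\mathbf{u},\cdot)$ is linear in its second argument and, by the Lipschitz bound in Lemma~\ref{lipschitzmonotone}, continuous on $\mathcal{H}$.

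Second, I would check that $\widetilde{\ell}$ is a bounded linear functional on $\mathcal{H}$. Linearity is immediate from \eqref{functionalproblem} and the definition of the stabilisation term $s(\cdot)$. Boundedness for the volume term $(f,v)_\Omega$ follows from $f \in H^1(\Omega)^\prime$; for the duality terms $\langle \phi_0, v_{|\Gamma}\rangle_\Gamma$ and $\langle \psi, (\tfrac12 - \mathcal{K})u_0\rangle_\Gamma$, it follows from the trace inequality \eqref{traceIneq} together with the mapping properties of $\mathcal{K}$ stated after \eqref{eq:operators}; the stabilisation contribution is handled by the same two ingredients. Hence $\widetilde{\ell} \in \mathcal{H}^\prime$, with norm controlled by $\|f\|_{H^1(\Omega)^\prime} + \|u_0\|_{\spht} + \|\phi_0\|_{\spdht}$.

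Third, under the assumption $C^\mathcal{U}_\mathrm{ell} > \tfrac{1}{4}$, Lemma~\ref{lipschitzmonotone} delivers precisely the two operator-theoretic properties required by \cite[Theorem~25.B]{zeidler}: strong monotonicity of $\widetilde{\mathcal{A}}$ with constant $C_\mathrm{ell}>0$ and Lipschitz continuity with constant $C_\mathrm{Lip}>0$. Applying that theorem to the equation $\widetilde{\mathcal{A}}(\mathbf{u}) = \widetilde{\ell}$ in $\mathcal{H}^\prime$ yields existence and uniqueness of $\mathbf{u} = (u,\phi) \in \mathcal{H}$ solving the stabilised Problem~\ref{stabProblem}, together with a continuous dependence estimate of the form $\|\mathbf{u}\|_\mathcal{H} \lesssim \|\widetilde{\ell}\|_{\mathcal{H}^\prime}$. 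Finally, invoking the equivalence stated in Lemma~\ref{equivalenceFormulation} transfers existence and uniqueness back to the original Problem~\ref{problemJN}, completing the proof. No step presents a genuine obstacle: the only point worth writing carefully is the boundedness of $\widetilde{\ell}$, since everything else is a direct citation of results already at our disposal.
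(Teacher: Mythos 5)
Your proposal is correct and follows exactly the route the paper takes: the paper proves this theorem by direct citation of \cite[Theorem~25.B]{zeidler}, with the hypotheses supplied by Lemma~\ref{lipschitzmonotone} and the transfer to Problem~\ref{problemJN} via Lemma~\ref{equivalenceFormulation}. The only difference is that you spell out the (routine but worthwhile) verification that $\widetilde{\ell}$ is a bounded linear functional on $\mathcal{H}$, which the paper leaves implicit.
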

For some engineering applications, where the non-linear operator $\mathcal{U}$ has a special form, we can state the following stabilization result.
\begin{lemma}\label{dataDep_interface}
  Let all the assumptions of Theorem \ref{dataDep} hold and let
  the non-linear operator $\mathcal{U}$ be of the form 
   $\mathcal{U}\nabla u:=g(|\nabla u|)\nabla u$ with a non-linear function $g:\mathbb{R}\to\mathbb{R}$.
   Then, for the solution $\mathbf{u}:= (u,\phi) \in \mathcal{H}$ of Problem \ref{problemJN}, we have the stability result
 \begin{equation*}
 \norm{\mathbf{u}}_\mathcal{H} \leq C \left( \norm{f}_{\sphp^\prime} + \norm{u_0}_\spht + \norm{\phi_0}_\spdht \right), \quad C > 0.
 \end{equation*}
\end{lemma}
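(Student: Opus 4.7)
The plan is to test the stabilized variational formulation at the solution itself, exploit the strong monotonicity from Lemma~\ref{lipschitzmonotone}, and estimate the resulting right-hand side against the data. The structural assumption $\mathcal{U}\nabla u = g(|\nabla u|)\nabla u$ enters through the single observation $\mathcal{U}(\mathbf{0})=\mathbf{0}$, because $g(0)\cdot \mathbf{0}=\mathbf{0}$. Inspecting the definitions of $a(\cdot,\cdot)$ and $s(\cdot)$, this immediately yields $\widetilde{a}(\mathbf{0},\mathbf{v})=0$ for every $\mathbf{v}\in\mathcal{H}$, i.e.\ $\widetilde{\mathcal{A}}(\mathbf{0})=0$ in $\mathcal{H}^\prime$, since every other contribution is linear in the first argument and $s(\mathbf{0})=0$.

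With this at hand, the strong monotonicity assertion of Lemma~\ref{lipschitzmonotone} applied at the pair $(\mathbf{u},\mathbf{0})$, combined with the fact that $\mathbf{u}$ is also a solution of the stabilized Problem~\ref{stabProblem} by Lemma~\ref{equivalenceFormulation}, gives
\[
 C_\mathrm{ell}\,\norm{\mathbf{u}}_\mathcal{H}^2 \leq \scalar{\widetilde{\mathcal{A}}(\mathbf{u})-\widetilde{\mathcal{A}}(\mathbf{0}),\mathbf{u}} = \widetilde{a}(\mathbf{u},\mathbf{u}) = \widetilde{\ell}(\mathbf{u}).
\]
I would then bound $|\widetilde{\ell}(\mathbf{u})|$ term by term, using duality for $(f,u)_\Omega$, the trace inequality~\eqref{traceIneq} for $\scalar{\phi_0,u_{\vert\Gamma}}_\Gamma$, and continuity of $\tfrac{1}{2}-\mathcal{K}\colon \spht\to\spht$ together with the $\spht$--$\spdht$ duality pairing for the two contributions involving $u_0$ (including the one hidden in the stabilization, for which $|s(\mathbf{u})|\lesssim \norm{\mathbf{u}}_\mathcal{H}$ is used). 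This produces a bound of the form $C\bigl(\norm{f}_{\sphp^\prime} + \norm{u_0}_\spht + \norm{\phi_0}_\spdht\bigr)\,\norm{\mathbf{u}}_\mathcal{H}$. Dividing by $\norm{\mathbf{u}}_\mathcal{H}$, with the case $\mathbf{u}=\mathbf{0}$ being trivial, closes the estimate.

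The only delicate point is really the first step: without the specific form $\mathcal{U}\nabla u = g(|\nabla u|)\nabla u$, the identity above would carry an additional term $\scalar{\widetilde{\mathcal{A}}(\mathbf{0}),\mathbf{u}}$, and $\norm{\widetilde{\mathcal{A}}(\mathbf{0})}_{\mathcal{H}^\prime}$ cannot in general be controlled by $(f,u_0,\phi_0)$ alone. The assumed structural form, which is natural in the targeted engineering applications, is precisely what removes this obstruction and reduces the argument to a standard strong-monotonicity energy estimate.
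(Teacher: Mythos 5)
Your argument is correct and follows essentially the same route as the paper: both test the strong monotonicity of $\widetilde{\mathcal{A}}$ against $\mathbf{v}=\mathbf{0}$, use that the structural form $\mathcal{U}\nabla u=g(|\nabla u|)\nabla u$ forces $\widetilde{\mathcal{A}}(\mathbf{0})=0$, identify $\scalar{\widetilde{\mathcal{A}}(\mathbf{u}),\mathbf{u}}=\widetilde{\ell}(\mathbf{u})$ via Lemma~\ref{equivalenceFormulation}, and then bound $\widetilde{\ell}(\mathbf{u})$ term by term through the trace inequality~\eqref{traceIneq} and the continuity of $\mathcal{K}$ and $\mathcal{V}$. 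Your closing remark about why the specific non-linearity is needed is exactly the point the paper's choice of $\mathbf{v}=(0,0)$ silently exploits.
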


\begin{proof}
Let $\mathbf{v} \in \mathcal{H}$ be arbitrary. 
We know from the strong monotonicity of $\widetilde{\mathcal{A}}$ that
\begin{equation*}
C_\mathrm{ell} \norm{\mathbf{u}-\mathbf{v}}^2_\mathcal{H} \leq \scalar{\widetilde{\mathcal{A}} 
(\mathbf{u})-\widetilde{\mathcal{A}} (\mathbf{v}),\mathbf{u}-\mathbf{v}}.
\end{equation*}
Without loss of generality, 
we choose $\mathbf{v} = \left(0,0\right)$ and note that $\mathcal{U}\nabla v=0$.  
Thanks to Lemma~\ref{equivalenceFormulation} $\mathbf{u}:= \left(u,\phi \right)$ is also the unique solution of the Problem~\ref{stabProblem}.
Thus, we conclude that 
\begin{align*}
C_\mathrm{ell} \norm{\mathbf{u}}^2_\mathcal{H} &\leq \scalar{\widetilde{\mathcal{A}} (\mathbf{u}),\mathbf{u}} = \widetilde{\ell}(\mathbf{u}), \\
&= \left( f, u \right)_\Omega + \left\langle \phi_0, u_{\vert\Gamma} \right\rangle_\Gamma + \left\langle \phi, 
\left( \frac{1}{2} - \mathcal{K}  \right) u_0 \right\rangle_\Gamma + \scalar{1,\left( \frac{1}{2}-\mathcal{K}\right)u_0}_\Gamma s(\mathbf{u}),
\end{align*}
with $s(\mathbf{u}) := \scalar{1,\left( \frac{1}{2}-\mathcal{K}\right)u_{\vert\Gamma}}_\Gamma + \scalar{1,\mathcal{V}\phi}_\Gamma$. 
Next we use inequality \eqref{traceIneq} along with the boundedness of $\mathcal{K}$ and $\mathcal{V}$. Then, rearranging the terms yield to
\begin{align*}
C_\mathrm{ell} \norm{\mathbf{u}}^2_\mathcal{H} &\leq \left( \norm{f}_{\sphp^\prime} + C_\mathrm{tr} \norm{\phi_0}_\spdht + \left( \frac{1}{2}+C^\mathcal{K} \right)^2 C_\mathrm{tr} \norm{u_0}_\spht \right) \norm{u}_\sphp \\
\quad & + \left( \left( \frac{1}{2}+C^\mathcal{K} \right) \left(1 + C^\mathcal{V} \right) \norm{u_0}_\spht \right) \norm{\phi}_\spdht, 
\end{align*}
where $C^\mathcal{K},\,C^\mathcal{V} > 0$ denote the continuity constants of the boundary integral operators $\mathcal{K}$ and $\mathcal{V}$, respectively. 
From this follows the  assertion with a constant $C>0$ that depends on $C^\mathcal{K},\,C^\mathcal{V}, \,C_\mathrm{tr}$, $C_\mathrm{ell}$ and $\Gamma$. 
\end{proof}

\begin{remark}
Because of  
Lemma \ref{equivalenceFormulation}, the results obtained for the stabilized formulation also hold true for 
the original non-symmetric coupling of Problem \ref{problemJN}. 
Hence, in the next section, we only discretize the original problem using a Galerkin approximation. In fact, the stabilized version is only used for analysis purposes.
\end{remark}
%%%%
\section{Galerkin discretization}\label{sec:iga}
Let $V_\ell \subset \sphp$ and $X_\ell \subset \spdht$ be some finite dimensional subspaces, where the index $\ell$ 
expresses a refinement level, e.g., in a sequence of mesh refinements. We assume that:
\begin{enumerate}[label=(A$3$)]
\item The discrete space $ X_\ell$ contains the constants, i.e., \[ \exists \xi \in \bigcap_{\ell \in \mathbb{N}_0} X_\ell \text{ such that } \scalar{\xi,1}_\Gamma \neq 0. \label{A3} \]
\end{enumerate}
We consider a conforming Galerkin discretization of the Problem \ref{problemJN}. 
Replacing the spaces $\sphp$ and $\spdht$ with $V_\ell$ and $X_\ell$, respectively, yields to the following discrete problem:
Find $\mathbf{u}_\ell = (u_\ell, \phi_\ell) \in \mathcal{H}_\ell=V_\ell \times X_\ell$ such that
\begin{align*} 
\left\langle \mathcal{U}\nabla u_\ell, \nabla v_\ell \right\rangle_\Omega - \left\langle \phi_\ell, v_\ell \right\rangle_\Gamma &= \left\langle f, v_\ell \right\rangle_\Omega + \left\langle \phi_0, v_\ell \right\rangle_\Gamma, \\
\left\langle \psi_\ell, \left( \frac{1}{2} - \mathcal{K}  \right) u_\ell \right\rangle_\Gamma + \left\langle \psi_\ell, \mathcal{V} \phi_\ell \right\rangle_\Gamma &= \left\langle \psi_\ell, \left( \frac{1}{2} - \mathcal{K}  \right) u_0 \right\rangle_\Gamma
\end{align*}
holds $\forall \mathbf{v}_\ell = \left( v_\ell, \psi_\ell \right) \in \mathcal{H}_\ell$.\\
The compact form in the product space $\mathcal{H}_\ell$ reads:
\begin{problem} \label{discreteProblemJN}
Find $\mathbf{u}_\ell=(u_\ell,\phi_\ell) \in \mathcal{H}_\ell=V_\ell \times X_\ell$ such 
that $a(\mathbf{u}_\ell,\mathbf{v}_\ell) = \ell(\mathbf{v}_\ell)$ holds $\forall \mathbf{v_\ell}=(v_\ell,\psi_\ell) \in \mathcal{H}_\ell$.
The linear form $a(\cdot,\cdot)$ and the linear functional $\ell$ are defined in \eqref{linearformproblem} and \eqref{functionalproblem}, respectively.
\end{problem}
Provided that Assumption \ref{A3} is satisfied, the analysis for Problem \ref{discreteProblemJN} is done analogously to the continuous Problem \ref{problemJN} 
since the discrete spaces are conform. In other words, all the above results including the introduction of a stabilized form and
Lemma \ref{equivalenceFormulation} also hold for the subspaces. 
In particular, due to Theorem \ref{dataDep} the discrete solution
$\mathbf{u}_\ell=(u_\ell,\phi_\ell) \in \mathcal{H}_\ell=V_\ell \times X_\ell$ of Problem \ref{discreteProblemJN} exists and is unique.
The following quasi-optimality result in the sense of the C\'ea-type Lemma is a standard but central result, which 
will be needed in Subsection \ref{subsec:estimates} for the a~priori error estimate of the non-symmetric coupling.
\begin{theorem}[Quasi-optimality]\label{cea}
Let Assumption \ref{A3} hold, and $C^\mathcal{U}_\mathrm{ell}> \frac{1}{4}$. 
Moreover, let $\mathbf{u} := \left( u,\phi \right) \in \mathcal{H}$ be the unique solution of Problem \ref{problemJN}, 
and $\mathbf{u}_\ell:= \left( u_\ell,\phi_\ell \right) \in \mathcal{H}_\ell$ 
the solution of its discrete counterpart Problem \ref{discreteProblemJN}. It holds that
\begin{equation*}
\norm{u-u_\ell}_\sphp + \norm{\phi- \phi_\ell}_\spdht \leq C_\text{C\'ea} \min_{v_\ell \in V_\ell, \psi_\ell \in X_\ell} \left(  \norm{u-v_\ell}_\sphp + \norm{\phi- \psi_\ell}_\spdht\right), 
\end{equation*}
with $C_\text{C\'ea} = \frac{C_\mathrm{Lip}}{C_\mathrm{ell}}$.
\end{theorem}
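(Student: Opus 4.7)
The plan is to argue exactly along classical monotone Céa lines, exploiting the two main properties of the stabilized operator $\widetilde{\mathcal{A}}$ established in Lemma~\ref{lipschitzmonotone}: Lipschitz continuity and, for $C^{\mathcal U}_\mathrm{ell}>\tfrac14$, strong monotonicity. The key preliminary observation is that Assumption~\ref{A3} carries Lemma~\ref{equivalenceFormulation} over to the discrete level: since $\mathcal{H}_\ell\subset\mathcal{H}$ contains the stabilization witness $\xi$, the discrete Problem~\ref{discreteProblemJN} is equivalent to its stabilized Galerkin counterpart. Hence both $\mathbf{u}\in\mathcal{H}$ and $\mathbf{u}_\ell\in\mathcal{H}_\ell$ satisfy $\scalar{\widetilde{\mathcal{A}}(\cdot),\mathbf{v}_\ell}=\widetilde{\ell}(\mathbf{v}_\ell)$ for every $\mathbf{v}_\ell\in\mathcal{H}_\ell$, which yields the (non-linear) Galerkin orthogonality
\begin{equation*}
\scalar{\widetilde{\mathcal{A}}(\mathbf{u})-\widetilde{\mathcal{A}}(\mathbf{u}_\ell),\mathbf{w}_\ell}=0\qquad\forall\,\mathbf{w}_\ell\in\mathcal{H}_\ell.
\end{equation*}

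The main estimate then proceeds as follows. Fix an arbitrary $\mathbf{v}_\ell\in\mathcal{H}_\ell$ and apply strong monotonicity to the pair $(\mathbf{u},\mathbf{u}_\ell)$:
\begin{equation*}
C_\mathrm{ell}\norm{\mathbf{u}-\mathbf{u}_\ell}_\mathcal{H}^2\leq\scalar{\widetilde{\mathcal{A}}(\mathbf{u})-\widetilde{\mathcal{A}}(\mathbf{u}_\ell),\mathbf{u}-\mathbf{u}_\ell}.
\end{equation*}
Write $\mathbf{u}-\mathbf{u}_\ell=(\mathbf{u}-\mathbf{v}_\ell)+(\mathbf{v}_\ell-\mathbf{u}_\ell)$. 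The second summand lies in $\mathcal{H}_\ell$ and is killed by Galerkin orthogonality, while the first is estimated with Lipschitz continuity to give
\begin{equation*}
C_\mathrm{ell}\norm{\mathbf{u}-\mathbf{u}_\ell}_\mathcal{H}^2\leq C_\mathrm{Lip}\norm{\mathbf{u}-\mathbf{u}_\ell}_\mathcal{H}\,\norm{\mathbf{u}-\mathbf{v}_\ell}_\mathcal{H}.
\end{equation*}
Dividing through (the case $\mathbf{u}=\mathbf{u}_\ell$ being trivial) and passing to the infimum over $\mathbf{v}_\ell=(v_\ell,\psi_\ell)\in\mathcal{H}_\ell$ yields the quasi-optimality in the $\mathcal{H}$-norm with the asserted constant $C_\mathrm{Lip}/C_\mathrm{ell}$.

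Finally, the theorem is stated with the sum-of-norms on both sides rather than the Euclidean combination \eqref{eq:spaceH}. This is purely cosmetic: from $\sqrt{a^2+b^2}\leq a+b\leq\sqrt{2}\sqrt{a^2+b^2}$ applied on both sides of the inequality, the estimate passes from the $\mathcal{H}$-norm to $\norm{u-u_\ell}_\sphp+\norm{\phi-\phi_\ell}_\spdht$ and likewise for the infimum, at most absorbing a factor $\sqrt{2}$ into $C_\text{C\'ea}$. I do not foresee any real obstacle; the only delicate point is verifying that Lemma~\ref{equivalenceFormulation} really transfers to $\mathcal{H}_\ell$, which is precisely what Assumption~\ref{A3} is there to guarantee, and checking that the Lipschitz constant of $\widetilde{\mathcal{A}}$ coming from Lemma~\ref{lipschitzmonotone} depends only on $C^{\mathcal U}_\mathrm{Lip}$ and on the continuity constants of $\mathcal{V}$ and $\mathcal{K}$, so that it is level-independent and can legitimately appear in $C_\text{C\'ea}$.
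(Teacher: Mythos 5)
Your proposal is correct and follows essentially the same route as the paper's own proof: strong monotonicity of the stabilized operator $\widetilde{\mathcal{A}}$, Galerkin orthogonality (justified via Lemma~\ref{equivalenceFormulation} on the conforming subspace under Assumption~\ref{A3}), and Lipschitz continuity, yielding $C_\mathrm{ell}\norm{\mathbf{u}-\mathbf{u}_\ell}_{\mathcal{H}}^2\leq C_\mathrm{Lip}\norm{\mathbf{u}-\mathbf{u}_\ell}_{\mathcal{H}}\norm{\mathbf{u}-\mathbf{v}_\ell}_{\mathcal{H}}$. Your explicit remark on passing between the Euclidean $\mathcal{H}$-norm and the sum of norms is a detail the paper glosses over, but it does not change the argument.
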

\begin{proof}
The assertion follows as a result of the main theorem on strongly monotone operators \cite[Corollary~25.7]{zeidler}.
That means with $\mathbf{v}_\ell=(v_\ell,\psi_\ell)$, thanks to Lemma \ref{equivalenceFormulation},
the strong monotonicity, Galerkin orthogonality, Cauchy-Schwarz inequality, and the Lipschitz continuity we get
\begin{align*}
C_\mathrm{ell} \norm{\mathbf{u}-\mathbf{u}_\ell}^2_\mathcal{H}
&\leq \scalar{\widetilde{\mathcal{A}} (\mathbf{u})-\widetilde{\mathcal{A}} (\mathbf{u}_\ell),\mathbf{u}-\mathbf{u}_\ell}
=\scalar{\widetilde{\mathcal{A}} (\mathbf{u})-\widetilde{\mathcal{A}} (\mathbf{u}_\ell),\mathbf{u}-\mathbf{v}_\ell}
\leq \norm{\widetilde{\mathcal{A}} (\mathbf{u})-\widetilde{\mathcal{A}} (\mathbf{u}_\ell)}_{\mathcal{H}^\prime}
\norm{\mathbf{u}- \mathbf{v}_\ell}_{\mathcal{H}} \\
&\leq C_\mathrm{Lip} \norm{\mathbf{u}-\mathbf{u}_\ell}_\mathcal{H}\norm{\mathbf{u}-\mathbf{v}_\ell}_\mathcal{H},
\end{align*}
where the assertion follows directly.
\end{proof}
\subsection{Isogeometric Analysis}
The basis functions that are considered for the geometry design 
in the isogeometric framework are used as ansatz functions for the Galerkin discretization.
These functions are typically B-Splines or some extensions of B-Splines, e.g., NURBS, T-Splines etc. \\
In the following, we introduce briefly the concept of Isogeometric Analysis and refer to \cite{cottrell2009isogeometric} for a more detailed introduction, and to \cite{da2014mathematical} and \cite{buffa2020multipatch} for a mathematical analysis of IGA in the FEM and BEM context, respectively.
\begin{definition}
Let $p \in \mathbb{N}$ denote the degree, and $k \in \mathbb{N}$ the number of the B-Spline basis functions, with $k > p$. A knot vector $\Xi := \{ \xi_0, \dots, \xi_{k+p} \}$ is called $p$-open if
\begin{equation*}
0 = \xi_0 = \dots = \xi_{p} < \xi_{p+1} \leq \dots \leq \xi_{k-1} < \xi_{k} = \dots = \xi_{k+p} = 1.
\end{equation*}
Associated to the knot vector $\Xi$, $k$ B-Spline basis functions can be defined recursively for $p \geq 1$ by
\begin{equation*}
b_{i}^{p} (x) = \frac{x - \xi_i}{\xi_{i+p} - \xi_i} b_{i}^{p-1}(x) + \frac{\xi_{i+p+1}-x}{\xi_{i+p+1}-\xi_{i+1}} b_{i+1}^{p-1}(x),
\end{equation*}
for all $i=0 \dots k-1$, starting with piecewise constant basis functions for $p = 0$, namely,
\begin{equation*}
b_{i}^{0}(x) = \begin{cases} 1 & \text{if }  \xi_i \leq x \leq \xi_{i+1} \\ 0 & \text{otherwise}  \end{cases}.
\end{equation*}
Moreover, we denote by $\mathbb{S}_{p}(\Xi) = \Span{\{(b_{i}^{p})_{i=0 \dots k-1}\}}$ the space of B-Splines of degree $p$ and dimension $k$ in the parameter domain over the knot vector $\Xi$. 
\end{definition} 
\begin{definition} Let $f: [0,1] \rightarrow \gamma \subset \R^d$ be a B-Spline mapping defined as 
 \begin{equation*}
f(x) = \sum_{i=0}^{k-1} c_i b_{i}^{p} (x)
\end{equation*} 
with $c_i \in \R^d $ representing an element of a set of control points. The mapping $f$ describes a one-dimensional curve 
embedded in a $d$-dimensional Euclidian space and is called a B-Spline curve. Moreover, we call $\gamma$ a patch if the mapping $f$ is regular. 
\end{definition}
As long as the B-Spline mapping $f$ is regular, we can define B-Spline spaces in the physical domain, i.e., over a patch $\gamma$ by using the following transformation
\begin{equation*}
\iota(f)(u) = u \circ f,
\end{equation*} 
namely, 
\begin{equation*}
\mathbb{S}_{p}(\gamma) = \{v : v = \iota(f)(u)^{-1}, u \in \mathbb{S}_{p}(\Xi) \}.
\end{equation*}  
\begin{definition} B-Spline spaces on higher dimensional domains are constructed by using tensor product relationships. For example, in $2D$, we write
 with $p_1,p_2\in\mathbb{N}$ and $k_1,k_2\in\mathbb{N}$
\begin{equation*}
\mathbb{S}_{p_1,p_2}(\Xi_1,\Xi_2) = \Span{\{(b_{i_1}^{p_1})_{i_1=0 \dots k_1-1} \cdot (b_{i_2}^{p_2})_{i_2=0 \dots k_2-1}\}},
\end{equation*} 
where $p_1$ and $p_2$ denote the degrees in each parametric direction, and $k_1k_2$ is the number of the B-Splines basis functions.
A B-Spline surface is thus represented by $\mathbf{f}(x): [0,1]^2 \rightarrow \omega \subset \R^d$ with
\begin{equation*}
\mathbf{f}(x) = \sum_{i_1=0}^{k_1-1} \sum_{i_2=0}^{k_2-1} \mathbf{c}_{i_1,i_2} \cdot b_{i_1}^{p_1} (x) \cdot  b_{i_2}^{p_2} (x),
\end{equation*} 
and $\mathbf{c}_{i_1,i_2} \in \R^d $ representing an element of a set of control points. If $f$ is regular, we call $\omega$ a patch.\\
Equivalently, the two-dimensional B-Spline space in the physical domain is defined over a patch $\omega$ by 
\begin{equation*}
\mathbb{S}_{p_1,p_2}(\omega) = \{v : v = \iota(\mathbf{f})(u)^{-1}, u \in \mathbb{S}_{p_1,p_2}(\Xi_1,\Xi_2) \}.
\end{equation*} 
\end{definition}
Note that for the sake of simplicity, if $p_1 = p_2= p$, a B-Spline space of degree $p$ should be understood as a B-Spline space of degree $p$ in each parametric direction.\\

The parametrization of curves and surfaces using B-Spline functions allows an exact representation of a large spectrum of geometries. However, they fail to represent conic sections exactly, which are widely present in the design of various engineering applications. In order to circumvent this, Non-Uniform Rational B-Splines (NURBS) are used instead, see \cite{cottrell2009isogeometric} and \cite{piegl2012nurbs}, for instance.
\begin{definition} 
 Let $p,k,p_1,p_2,k_1,k_2\in\mathbb{N}$ as above.
 NURBS mappings can be considered as weighted B-Spline mappings. They can be defined as follows,
 \begin{equation*}
r(x) := \sum_{i=0}^{k-1} \frac{ c_i w_i b_{i}^{p} (x)}{\sum_{j=0}^{k-1} w_j b_{j}^{p} (x)} \quad \text{ in $1$D},
\end{equation*} 
\begin{equation*}
\mathbf{r}(x) := \sum_{i_1=0}^{k_1-1} \sum_{i_2=0}^{k_2-1} \frac{\mathbf{c}_{i_1,i_2} w_{i_1,i_2} \cdot b_{i_1}^{p_1} (x) \cdot  b_{i_2}^{p_2} (x)}{\sum_{j_1=0}^{k_1-1} \sum_{j_2=0}^{k_2-1} w_{j_1,j_2} \cdot b_{j_1}^{p_1} (x) \cdot  b_{j_2}^{p_2} (x)} \quad \text{ in $2$D.}
\end{equation*} 
Thereby, $w_i,\,w_{i_1,i_2} \in \R$ are elements of a vector of dimension $k$ and a matrix of dimension $k_1 \times k_2$, 
containing weighting coefficients of the NURBS, respectively, and $c_i,\mathbf{c}_{i_1,i_2}$ are the control points.
\end{definition}
\begin{remark}
Contrary to B-Splines, NURBS spaces on higher dimensional domains cannot be defined using simple tensor product relationships.     
\end{remark}
In order to guarantee the existence of a regular mapping between the parameter and the physical domain, multiple patches defined through a family of regular parameterizations may in some cases be necessary.
\begin{definition}\label{def:multipatch}
Let $\Omega$ be a two-dimensional Lipschitz domain with boundary $\Gamma$. The domain $\Omega$ is called a multipatch domain, if there exists a family of $N_\Omega$ disjoint patches such that $\Omega = \bigcup_{i} \Omega_i$ and a regular parametrization $\mathbf{r}_i(x): [0,1]^2 \rightarrow \Omega_i$ for every single patch $\Omega_i$, with $0 \leq i < N_\Omega$. Furthermore, we require the parametrization at interfaces to coincide. \\
Equivalently, $\Gamma$ is also considered a multipatch domain with $\Gamma = \bigcup_{i} \Gamma_i$ and $r_i(x): [0,1] \rightarrow \Gamma_i$, $\forall \Gamma_i$, with $0 \leq i < N_\Gamma$.
\end{definition}
Knowing that B-Splines form a partition of unity \cite{piegl2012nurbs}, it is easy to see that B-Splines are a special type of NURBS, 
when the weightings are equal to $1$. 

In the following, if we refer to the geometry, we mean NURBS mappings.  
If we refer to the spaces used for the discretizations, we mean B-Spline mappings. 
The motivation for this follows from \cite{buffa2020multipatch}, namely, the spline 
preserving property of B-Splines is needed for a conforming discretization of the De Rham complex.
%%%%%
\subsection{Error estimates for an isogeometric FEM-BEM discretization}\label{subsec:estimates}
Let the assumptions of Section \ref{sec:interfaceProb} on $\Omega$ hold. We consider the discrete Problem \ref{discreteProblemJN} with $V_\ell = \mathbb{S}^0(\Omega)$ and $X_\ell =\mathbb{S}^2(\Gamma)$, where $\mathbb{S}^0(\Omega)$ and $\mathbb{S}^2(\Gamma)$ are B-Spline spaces defined as in \cite{buffa2020multipatch} and \cite{da2014mathematical}. Namely, 
\begin{equation}\label{S0}
\mathbb{S}^0(\Omega) = \{ u  \in \sphp : u_{|\Omega_i} \in \mathbb{S}_{p,p}(\Omega_i), \, \forall\, 0 \leq i < N_\Omega \},
\end{equation}
and 
\begin{equation}\label{S2}
\mathbb{S}^2(\Gamma) = \{ \phi \in \spdht : \phi_{|\Gamma_i} \in  \mathbb{S}_{p-1}(\Gamma_i), \, \forall \, 0 \leq i < N_\Gamma \}.
\end{equation}
Thereby, $N_\Omega$ and $N_\Gamma$ denote the number of domain patches and boundary patches, respectively. Note that the degrees of the B-Spline spaces \eqref{S0} and \eqref{S2} are solely fixed by one parameter $p>0$.
\begin{definition} Let $\Xi = \{ \xi_0, \dots, \xi_{k+p} \}$ be a p-open knot vector. A patch element in the parameter domain is defined as $[\xi_i, \xi_{i+1}]$, for some $ 0\leq i < k+p $. The local mesh size is defined as the length of an element, i.e., $h_i = \xi_{i+1}-\xi_i$. Furthermore, we denote by $h = \max_{0\leq i < k+p } h_i$ the global mesh size of a single patch. Equivalently, $h$ denotes the largest local mesh size of all patches for a multipatch domain. 
\end{definition}
Throughout the rest of this work, we assume the following:
\begin{enumerate}[label=(A\arabic*)]
 \setcounter{enumi}{3}
\item \label{A4} All knot vectors are $p$-open and locally quasi-uniform, i.e., for all non-empty, neighboring elements $[\xi_{i_1},\xi_{i_1 + 1}]$ and $[\xi_{i_2},\xi_{i_2 + 1}]$, there exists $ \theta \geq 1$, such that \[ \theta^{-1} \leq h_{i_1} h_{i_2}^{-1} \leq \theta. \]
\item \label{A5} The multipatch geometry of $\Omega$ is generated by a family of regular, smooth parameterizations.
\end{enumerate}
\begin{definition}
Let $D = \{\Omega, \Gamma\}$ be a multipatch domain with $n$ patches. For some $s \in \R$, we define the space of patchwise regularity by 
\begin{equation*}
H_\mathrm{pw}^{s}(D) = \{ u \in L^2(D): \norm{u}_{H_\mathrm{pw}^{s}(D)} < \infty \},  
\end{equation*} 
where 
\begin{align}
 \label{eq:pwnorm}
\norm{u}_{H_\mathrm{pw}^{s}(D)}^2 = \sum_{0<i\leq n} \norm{u_{|D_i}}_{H^{s}(D)}^2.
\end{align}
\end{definition}
\begin{lemma}\label{approxProp}
Let $u\in H^1(\Omega) \cap H_\mathrm{pw}^{1+s}(\Omega)$ and $\phi\in H^{-\frac{1}{2}}(\Gamma)\cap H_\mathrm{pw}^{-\frac{1}{2}+s}(\Gamma)$. 
Consider $\mathbb{S}^0(\Omega)$ and $\mathbb{S}^2(\Gamma)$ as given in \eqref{S0} and \eqref{S2}, respectively. There exists $C_0,\,C_2 >0$ such that
\begin{align*}
\inf_{u_\ell \in \mathbb{S}^0(\Omega) }\norm{u-u_\ell}_\sphp  &\leq C_0\, h^{s} \norm{u}_{H_\mathrm{pw}^{1+s}(\Omega)},  \qquad 0\leq s \leq p,\\
\inf_{\phi_\ell \in \mathbb{S}^2(\Gamma)}\norm{\phi-\phi_\ell}_\spdht &\leq C_2\, h^{s} \norm{\phi}_{H_\mathrm{pw}^{-\frac{1}{2}+s}(\Gamma)}, \qquad \frac{1}{2}\leq s \leq p+\frac{1}{2}. 
\end{align*}
\end{lemma}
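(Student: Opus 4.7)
The plan is to assemble both estimates from the patchwise approximation theory developed for multipatch isogeometric spaces in \cite{da2014mathematical,buffa2020multipatch} and to combine the local bounds with the patchwise norm \eqref{eq:pwnorm}. On a single reference patch, B-spline approximation estimates reduce to a standard Bramble-Hilbert argument on the parametric square $[0,1]^d$, and the passage to the physical patch is controlled by the smooth, regular NURBS parametrization of Assumption \ref{A5}; the local quasi-uniformity \ref{A4} ensures that local and global mesh sizes are comparable up to a factor depending only on $\theta$.

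For the first estimate I would introduce an $H^1$-stable quasi-interpolation operator $\Pi_\ell^0:H^1(\Omega)\to\mathbb{S}^0(\Omega)$. On each patch $\Omega_i$, this is a tensor-product B-spline quasi-interpolant (e.g.\ a Schumaker/Lyche-type projector) pulled back through the NURBS map $\mathbf{r}_i$. The crucial global requirement is $C^0$-continuity across patch interfaces, which is admissible because by Definition \ref{def:multipatch} the parametrizations coincide at interfaces, so the interface trace degrees of freedom can be identified or averaged without losing local approximation order. On each $\Omega_i$, Bramble-Hilbert on the reference square together with the scaling induced by $\mathbf{r}_i$ gives $\norm{u-\Pi_\ell^0 u}_{H^1(\Omega_i)}\le C\,h^s\norm{u}_{H^{1+s}(\Omega_i)}$ for $0\le s\le p$. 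Squaring, summing over the $N_\Omega$ patches and invoking \eqref{eq:pwnorm} yields the claim, with $C_0$ depending only on $p$, $\theta$ and the parametrizations.

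For the second estimate, the essential observation is that $\mathbb{S}^2(\Gamma)$ is only $H^{-\frac12}$-conforming; in particular its elements need not be continuous across patch interfaces. I would therefore use the patchwise $L^2$-orthogonal projection $Q_\ell^2:L^2(\Gamma)\to\mathbb{S}^2(\Gamma)$ defined independently on each $\Gamma_i$. Standard B-spline projection estimates give $\norm{\phi-Q_\ell^2\phi}_{L^2(\Gamma_i)}\le C\,h^{s'}\norm{\phi}_{H^{s'}(\Gamma_i)}$ for $0\le s'\le p$ and an analogous $H^{\frac12}$-stable bound. To reach the negative-order norm I would use an Aubin--Nitsche-type duality: for any $\psi\in H^{\frac12}(\Gamma)$,
\[
\scalar{\phi-Q_\ell^2\phi,\psi}_\Gamma=\scalar{\phi-Q_\ell^2\phi,\psi-Q_\ell^2\psi}_\Gamma
\]
by $L^2$-orthogonality of $Q_\ell^2$. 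Estimating each factor in $L^2$ patchwise and using the $H^{\frac12}$-approximation bound for $\psi$ gains the extra $h^{1/2}$; dividing by $\norm{\psi}_\spht$ and taking the supremum produces the $H^{-\frac12}$ bound. Reindexing $s=s'+\tfrac12$ gives the stated estimate in the range $\tfrac12\le s\le p+\tfrac12$.

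The main technical obstacle is this duality step, because $H^{-\frac12}(\Gamma)$ is a global dual space that does not split as a direct sum over patches. This is circumvented by the continuity of the restriction map $H^{\frac12}(\Gamma)\hookrightarrow\prod_i H^{\frac12}(\Gamma_i)$, which lets a patchwise bound on $\norm{\psi-Q_\ell^2\psi}_{L^2(\Gamma_i)}$ be converted into a bound on the global dual pairing. A secondary, routine issue is tracking how the constants depend on the first and second derivatives of the NURBS parametrizations; this dependence is uniform under \ref{A5} and is absorbed into $C_0$ and $C_2$.
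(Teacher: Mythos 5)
Your argument is correct, but it is worth noting that the paper does not actually prove this lemma: its ``proof'' consists of two citations, namely Corollary~2 and Corollary~4 of \cite{buffa2020multipatch}, which deliver the $H^1(\Omega)$- and $H^{-\frac12}(\Gamma)$-estimates directly. What you have written is, in effect, a self-contained reconstruction of the arguments behind those cited results: a globally $C^0$ patchwise quasi-interpolant with Bramble--Hilbert scaling for the first bound, and the patchwise $L^2$-projection combined with the orthogonality/duality trick
$\scalar{\phi-Q_\ell^2\phi,\psi}_\Gamma=\scalar{\phi-Q_\ell^2\phi,\psi-Q_\ell^2\psi}_\Gamma$
to lift an $L^2$ rate to the negative-order norm for the second. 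Both steps are sound; in particular you correctly identify that $\mathbb{S}^2(\Gamma)$ carries no interpatch continuity, so the projection genuinely decouples over patches, and that the only global ingredient needed is the continuity of the restrictions $H^{\frac12}(\Gamma)\to H^{\frac12}(\Gamma_i)$, which converts the patchwise test-function bound into a bound on the global duality pairing. Two minor points you should make explicit if this were written out in full: the fractional orders $s$ (respectively $s'$) are obtained from the integer-order Bramble--Hilbert estimates by interpolation of Sobolev spaces, and the endpoint $s=\tfrac12$ (i.e.\ $s'=0$) uses only the $L^2$-stability of $Q_\ell^2$ rather than an approximation property. The trade-off is the usual one: the paper's citation is shorter and inherits the precise constants and admissible ranges from \cite{buffa2020multipatch}, whereas your version makes the mechanism visible and shows exactly where Assumptions \ref{A4} and \ref{A5} and the interface-matching of Definition \ref{def:multipatch} enter.
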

\begin{proof}
The first estimate is given in \cite[Corollary~2]{buffa2020multipatch}, and the second one follows 
from \cite[Corollary~4]{buffa2020multipatch}.
\end{proof}
\begin{theorem} \label{aPrioriEstimate}
We assume $C^\mathcal{U}_\mathrm{ell}> \frac{1}{4}$.
Let $(u,\phi) \in \mathcal{H}$ be the solution of the Problem \ref{problemJN} and 
let $(u_\ell,\phi_\ell) \in \mathcal{H}_\ell = \mathbb{S}^0(\Omega) \times  \mathbb{S}^2(\Gamma)$ 
be the solution of the discrete Problem \ref{discreteProblemJN}.
Then for $0\leq s\leq \frac{1}{2}$, there holds with $u \in H^1(\Omega) \cap H_\mathrm{pw}^{1+s}(\Omega)$ 
and $\phi \in H^{-\frac{1}{2}}(\Gamma)\cap H_\mathrm{pw}^{0}(\Gamma)$ 
\begin{align*}
\norm{u-u_\ell}_\sphp + \norm{\phi-\phi_\ell}_\spdht \leq C\,h^s \left( \norm{u}_{H_\mathrm{pw}^{1+s}(\Omega)} 
+ \norm{\phi}_{H_\mathrm{pw}^{0}(\Gamma)} \right).
\end{align*}
 
For $\frac{1}{2} \leq s\leq p$, 
and $u \in H^1(\Omega) \cap H_\mathrm{pw}^{1+s}(\Omega)$ 
and $\phi \in H^{-\frac{1}{2}}(\Gamma)\cap H_\mathrm{pw}^{-\frac{1}{2}+s}(\Gamma)$,
we have
\begin{align*}
\norm{u-u_\ell}_\sphp + \norm{\phi-\phi_\ell}_\spdht \leq C\,h^s \left( \norm{u}_{H_\mathrm{pw}^{1+s}(\Omega)} 
+ \norm{\phi}_{H_\mathrm{pw}^{-\frac{1}{2}+s}(\Gamma)} \right)
\end{align*}
with a constant $C>0$.
\end{theorem}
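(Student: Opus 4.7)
The proof is essentially a combination of the quasi-optimality statement in Theorem~\ref{cea} with the patchwise approximation bounds collected in Lemma~\ref{approxProp}. The plan is therefore to apply the C\'ea-type estimate first to reduce the analysis to best-approximation errors in $\mathbb{S}^0(\Omega)$ and $\mathbb{S}^2(\Gamma)$ separately, and then to insert the spline approximation rates. Since the BEM ansatz space $\mathbb{S}^2(\Gamma)\subset H^{-1/2}(\Gamma)$ only admits an explicit convergence rate from Lemma~\ref{approxProp} for regularity exponents $s\geq 1/2$, a case distinction on $s$ is unavoidable.

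More concretely, I would proceed as follows. First, under the assumption $C^{\mathcal{U}}_\mathrm{ell}>\tfrac{1}{4}$, Theorem~\ref{cea} yields
\begin{equation*}
\norm{u-u_\ell}_\sphp + \norm{\phi-\phi_\ell}_\spdht
\leq C_\text{C\'ea}\!\!\!\inf_{(v_\ell,\psi_\ell)\in\mathbb{S}^0(\Omega)\times\mathbb{S}^2(\Gamma)}\!\!\!
\bigl(\norm{u-v_\ell}_\sphp+\norm{\phi-\psi_\ell}_\spdht\bigr),
\end{equation*}
so that the two components of the infimum decouple and can be estimated independently. Then, for the regime $\tfrac{1}{2}\leq s\leq p$, I would apply Lemma~\ref{approxProp} to both factors directly: the first bound gives $\inf_{v_\ell}\norm{u-v_\ell}_\sphp\leq C_0 h^s\norm{u}_{H^{1+s}_\mathrm{pw}(\Omega)}$ (valid for $0\leq s\leq p$, hence in particular here), while the second gives $\inf_{\psi_\ell}\norm{\phi-\psi_\ell}_\spdht\leq C_2 h^s\norm{\phi}_{H^{-1/2+s}_\mathrm{pw}(\Gamma)}$. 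Combining these two estimates and absorbing the constants into a single $C>0$ yields the second claimed bound.

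For the low-regularity range $0\leq s<\tfrac{1}{2}$, the FEM part is handled exactly as above through Lemma~\ref{approxProp}, giving the rate $h^s$ for $\norm{u-v_\ell}_\sphp$. The BEM part, however, cannot be estimated at the exponent $s<\tfrac{1}{2}$ by that lemma. Here the idea is to use the BEM estimate at the lowest admissible exponent $s=\tfrac{1}{2}$, which yields $\inf_{\psi_\ell}\norm{\phi-\psi_\ell}_\spdht\leq C_2 h^{1/2}\norm{\phi}_{H^{0}_\mathrm{pw}(\Gamma)}$. Assuming without loss of generality $h\leq 1$ (which can be arranged by an initial refinement and absorbed into the constant), we bound $h^{1/2}\leq h^s$ for $s\leq\tfrac{1}{2}$, so this term contributes at most $C h^s\norm{\phi}_{H^{0}_\mathrm{pw}(\Gamma)}$. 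Putting the FEM and BEM bounds together gives the first claimed inequality.

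I do not expect a real obstacle: every ingredient is already stated. The only subtle point is the mismatch between the $h$-exponent at which the BEM best approximation is available ($\tfrac{1}{2}$) and the actual regularity exponent ($s<\tfrac{1}{2}$); this is resolved by the trivial monotonicity $h^{1/2}\leq h^s$ together with the fact that $H^0_\mathrm{pw}(\Gamma)$-regularity is exactly what the right-hand side of the first bound assumes, consistent with using the endpoint case of Lemma~\ref{approxProp}. Once this observation is made, the proof is essentially a one-line combination of Theorem~\ref{cea} and Lemma~\ref{approxProp}.
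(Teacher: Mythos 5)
Your proposal is correct and follows essentially the same route as the paper: the paper's proof likewise just combines the quasi-optimality of Theorem~\ref{cea} with the spline approximation estimates of Lemma~\ref{approxProp}, without even spelling out the low-regularity case. Your explicit treatment of $0\leq s<\tfrac{1}{2}$ via the endpoint $s=\tfrac{1}{2}$ of the BEM estimate and the monotonicity $h^{1/2}\leq h^{s}$ for $h\leq 1$ is a valid (and slightly more detailed) account of what the paper leaves implicit.
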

\begin{proof}
From \cite{buffa2020multipatch} we know that $\mathbb{S}^0(\Omega)$ and $\mathbb{S}^2(\Gamma)$ are closed subspaces of $\sphp$ and $\spdht$, respectively. 
Moreover, Assumption \ref{A3} holds true per construction of the B-Spline spaces. Hence, the usual analysis for a conforming Galerkin discretization 
of a non-symmetric FEM-BEM coupling can be considered also in the isogeometric context. Now, 
using Lemma \ref{approxProp} and the quasi-optimality stated in Theorem \ref{cea} yields the assertion. 
\end{proof}
\section{Extension of the model problem} \label{sec:twoDomainProblem}
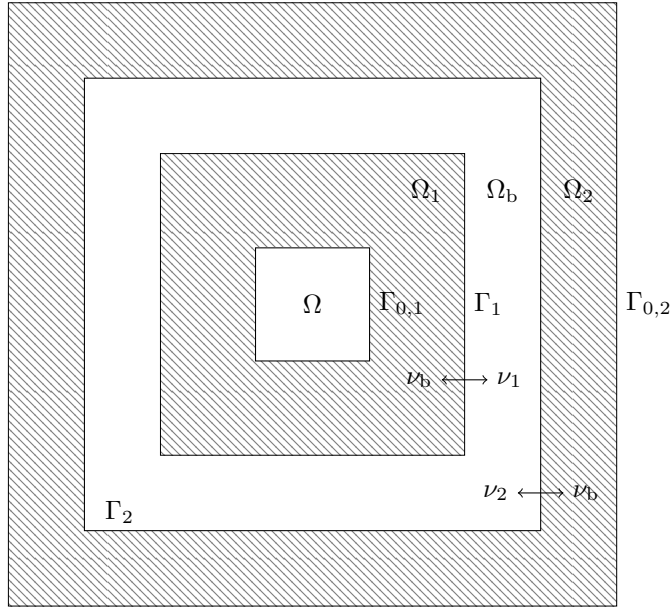
\begin{figure}[!tb]
\begin{center}
\begin{tikzpicture}
\draw[pattern=north west lines, pattern color=gray, very thin] (-2,-2) rectangle ++(8,8);
\draw[fill = white] (-1,-1) rectangle ++(6,6);
\draw[pattern=north west lines, pattern color=gray, very thin] (0,0) rectangle ++(4,4);
\draw[fill = white] (1.25,1.25) rectangle ++(1.5,1.5);
\node at (4.5,3.5) {$\Omega_\mathrm{b}$};
\node at (3.5,3.5) {$\Omega_1$};
\node at (5.5,3.5) {$\Omega_2$};
\node at (2,2) {$\Omega$};
\node[right] at (2.75,2) {$\Gamma_{0,1}$};
\node[right] at (6,2) {$\Gamma_{0,2}$};
\node[right] at (4,2) {$\Gamma_1$};
\node[right] at (-0.85,-0.75) {$\Gamma_2$};

\draw[->] (4,1) --++(0.3,0)node [right] {$\nu_1$} ;
\draw[->] (4,1) --++(-0.3,0)node [left] {$\nu_\mathrm{b}$} ;
\draw[->] (5,-0.5) --++(0.3,0)node [right] {$\nu_\mathrm{b}$} ;
\draw[->] (5,-0.5) --++(-0.3,0)node [left] {$\nu_2$} ;
\end{tikzpicture}
\end{center}
\caption{We see a possible domain arrangement for the boundary value problem discussed in Problem~\eqref{classProbleme} 
with two disjoint domains separated by a gap. The domain $\Omega_\mathrm{b}$ can be very thin and represents, e.g., an air gap. It is surrounded 
by two bounded domains $\Omega_1$ and $\Omega_2$.}\label{fig:2domains}
\end{figure}
Let $\Omega,\Omega_1,\Omega_\mathrm{b},\Omega_2 \subset \mathbb{R}^2$, be bounded Lipschitz domains, 
see Figure \ref{fig:2domains}. We denote by $\Gamma_\mathrm{b} = \Gamma_1 \cup \Gamma_2$ 
the boundary of $ \Omega_\mathrm{b}$ and by $\Gamma_{0,1}$ and $\Gamma_{0,2}$ the Dirichlet boundaries of $\Omega_1$ and  $\Omega_2$, respectively. %$\Gamma_{0,i}$ the Dirichlet boundaries of $\Omega_i, i = 1,2$. 
Furthermore, we define 
\begin{equation*}
H^1_0(\Omega_i,\Gamma_{0,i}) := \{ u \in H^1(\Omega_i) : u_{\vert\Gamma_{0,i}}  = 0\}\quad\text{for}\quad i=1,2.
\end{equation*}

We consider the following boundary value problem:
Find $(u_1,u_2,u_\mathrm{b}) \in H^1_0(\Omega_1,\Gamma_{0,1})\times H^1_0(\Omega_2,\Gamma_{0,2}) \times H^1(\Omega_\mathrm{b})$ such that
\begin{subequations}\label{classProbleme}
\begin{align} 
- \Div \left( \mathcal{U}_i \nabla u_i \right) &= f_i &&\text{in }\Omega_i, \quad i=1,2,\label{intLaplacee}\\
- \Delta u_\mathrm{b} &= 0  &&\text{in }\Omega_\mathrm{b} ,\label{extLaplacee} \\
{u_\mathrm{b}}_{\vert\Gamma_i}-{ u_i}_{\vert \Gamma_i} &= u_{0,i}&&\text{on } \Gamma_i,\quad i=1,2,\label{dirJumpe} \\ 
\mathcal{U}_i{\nabla u_i}_{\vert\Gamma_i} \cdot \nu_i + {\nabla u_\mathrm{b}}_{\vert\Gamma_i} \cdot \nu_\mathrm{b} &= \phi_{0,i} &&\text{on } \Gamma_i,\quad i=1,2,\label{neuJumpe} \\ 
{u_i}_{\vert\Gamma_{0,i}} &= 0  &&\text{on } \Gamma_{0,i},\quad i=1,2.  \label{dirConditione}
%u^\text{b} &= \mathcal{O}\left( \vert x \vert^{-1} \right) && \text{for } \vert x \vert \rightarrow \infty,\text{ d} = 3,\label{radConditione}\\
%u^\text{b} &=c_\infty \log\vert x \vert + \mathcal{O}\left( \vert x \vert^{-1} \right) && \text{for } \vert x \vert \rightarrow \infty, \text{ d} = 2,\label{radCondition2e}
\end{align}
\end{subequations}
Hereby, $\nu_{i}$ and $\nu_{\text{b}}$ denote the outer normal vector of $\Omega_i$ and $\Omega_\mathrm{b}$, 
respectively, $(f_i,u_{0,i},\phi_{0,i}) \in {H^1(\Omega_i)}^\prime \times H^{\frac{1}{2}}(\Gamma_i) \times H^{-\frac{1}{2}}(\Gamma_i)$ with $i=1,2$
are some given data, and $\mathcal{U}_i$ are possibly non-linear operators with the Assumptions \ref{A1} and \ref{A2}. 
We emphasize that the model problem \eqref{classProbleme} can be used to simulate electric machines, 
see also the example in Section \ref{machine}, which motivates its consideration.
Next, we want to derive a weak formulation for Problem \eqref{classProbleme}.  
We consider the weak form of the two problems in $\Omega_1$ and $\Omega_2$. Hence, we multiply~\eqref{intLaplacee} with
test functions and apply the first Green's identity and get
\begin{equation}\label{3Green}
\left( \mathcal{U}_i\nabla u_i, \nabla v_i \right)_{\Omega_i} 
- \left\langle \mathcal{U}_i\partial_{\nu_i} {u_i}, {v_i}_{\vert\Gamma_i} \right\rangle_{\Gamma_i} = \left( f_i, v_i \right)_{\Omega_i}
\end{equation}
for $i=1,2$. Note that $u_i=0$ on $\Gamma_{0,i}$.
We may transfer \eqref{extLaplacee} in $\Omega_\mathrm{b}$ to an integral equation on $\Gamma_\mathrm{b}$ in order to apply BEM in the following. 
Hence, the (interior) representation formula \eqref{repFormula_intro} ($\kappa=0$)
hold if we replace $u$ by $u_\mathrm{b}$. 
Let $\phi := \partial_{\nu_\mathrm{b}} u_\mathrm{b}$ denote the conormal derivative of $u_\mathrm{b}$ on $\Gamma_\mathrm{b}$,
the BIE is obtained as in Section \ref{sec:introduction}
\begin{equation} \label{interiorProblem}
\mathcal{V} \phi = \left( \frac{1}{2} + \mathcal{K}\right) {u_\mathrm{b}}_{\vert_{\Gamma_\mathrm{b}}}, 
\end{equation} 
where the 
the single layer operator $\mathcal{V}$ and the double layer operator $\mathcal {K}$ are defined in \eqref{eq:operators}
over $\Gamma_\mathrm{b}$ instead of $\Gamma$ but of course with the same fundamental solution $G(x,y)$.
Note that the normal vector $\nu_\mathrm{b}$ points outwards with respect to $\Omega_\mathrm{b}$
since it is considered as an interior problem in our integral equation notation.

In what follows we strongly follow the work of~\cite{of2014ellipticity}, where a boundary value problem with hard inclusion
is considered. 
As in~\cite{of2014ellipticity}, we can derive two equivalent weak formulations. It is enough to consider here only one.
In what follows, the following considerations might help for a better understanding for the weak coupling formulation below.
Note that for a constant it follows $(\frac{1}{2}+\mathcal{K})1 = 0$ on $\Gamma_\mathrm{b}$. Furthermore, if $\mathcal{K}'$
is the adjoint operator of $\mathcal{K}$ and it holds $\mathcal{V}^{-1}\mathcal{K}=\mathcal{K}'\mathcal{V}^{-1}$.
Then, with \eqref{interiorProblem} we see
\begin{align*}
\left\langle \phi, 1 \right\rangle_{\Gamma_\mathrm{b}} =\left\langle \mathcal{V}\phi, \mathcal{V}^{-1}1 \right\rangle_{\Gamma_\mathrm{b}}
=\left\langle ( \frac{1}{2} + \mathcal{K}) {u_\mathrm{b}}_{\vert{\Gamma_\mathrm{b}}}, \mathcal{V}^{-1}1 \right\rangle_{\Gamma_\mathrm{b}}
&=\left\langle  {u_\mathrm{b}}_{\vert{\Gamma_\mathrm{b}}}, ( \frac{1}{2} + \mathcal{K}')\mathcal{V}^{-1}1 \right\rangle_{\Gamma_\mathrm{b}}\\
&=\left\langle  {u_\mathrm{b}}_{\vert{\Gamma_\mathrm{b}}}, \mathcal{V}^{-1}( \frac{1}{2} + \mathcal{K})1 \right\rangle_{\Gamma_\mathrm{b}}=0.
\end{align*}
Note that this $\phi$ together with the representation formula leads to $u_\mathrm{b}$ in $\Omega_\mathrm{b}$, see also \cite[Theorem 7.5]{mclean}.
Therefore, we introduce the following subspace
\begin{equation*}
H_\star^{-\frac{1}{2}}(\Gamma_\mathrm{b}) = \{ \psi \in H^{-\frac{1}{2}}(\Gamma_\mathrm{b}) : \left\langle \psi, 1_{\Gamma_\mathrm{b}} \right\rangle_{\Gamma_\mathrm{b}} = 0\}.
\end{equation*}
Furthermore, similar as in Section \ref{sec:interfaceProb} we introduce a product space with its norm, namely
\begin{align}
 \label{eq:spaceH0}
 \begin{split}
 \mathcal{H}_0&:=H_0^1(\Omega_1,\Gamma_{0,1})\times H_0^1(\Omega_2,\Gamma_{0,2}) \times H_\star^{-\frac{1}{2}}(\Gamma_\mathrm{b}),\\
 \norm{\mathbf{v}}_{\mathcal{H}_0}&:=\big(\norm{v_1}_{H^1(\Omega_1)}^2+\norm{v_2}_{H^1(\Omega_2)}^2+\norm{\psi}_{H^{-\frac{1}{2}}(\Gamma_\mathrm{b})}^2\big)^{\frac{1}{2}}
  \text{ for } \mathbf{v}=(v_1,v_2,\psi)\in\mathcal{H}_0.
  \end{split}
\end{align}

\begin{remark} Instead of considering a subspace and thus eliminating the constants from the solution space, a 
 suitable orthogonal decomposition of $H^{-\frac{1}{2}}(\Gamma_\mathrm{b})$ in the following proofs 
 could also be considered, see \cite{of2014ellipticity}.
\end{remark}
Using $\Gamma_\mathrm{b} = \Gamma_1 \cup \Gamma_2$ and inserting the corresponding jump conditions \eqref{dirJumpe} and \eqref{neuJumpe} in \eqref{interiorProblem} and \eqref{3Green}, 
respectively, yields the following variational problem: \\
Find $\mathbf{u} := (u_1,u_2, \phi) \in \mathcal{H}_0:=H_0^1(\Omega_1,\Gamma_{0,1})\times H_0^1(\Omega_2,\Gamma_{0,2}) \times H_\star^{-\frac{1}{2}}(\Gamma_\mathrm{b})$ such that
\begin{align*} 
\left( \mathcal{U}_1\nabla u_1, \nabla v_1 \right)_{\Omega_1} + \left\langle \phi_{|{\Gamma_1}}, {v_1}_{\vert{\Gamma_1}} \right\rangle_{\Gamma_1} &= \left( f_1, v_1 \right)_{\Omega_1} + \left\langle \phi_{0,1}, {v_1}_{\vert{\Gamma_1}} \right\rangle_{\Gamma_1},\\
\left( \mathcal{U}_2\nabla u_2, \nabla v_2 \right)_{\Omega_2} + \left\langle \phi_{|{\Gamma_2}}, {v_2}_{\vert{\Gamma_2}} \right\rangle_{\Gamma_2} &= \left( f_2, v_2 \right)_{\Omega_2} + \left\langle \phi_{0,2}, {v_2}_{\vert{\Gamma_2}} \right\rangle_{\Gamma_2},\\
%\left( \mathcal{U}_2\nabla u_2, \nabla v_2 \right)_{\Omega_2} + \left\langle \phi_2, v_{\vert\Gamma_2} \right\rangle_{\Gamma_2} &= \left( f_2, v_2 \right)_{\Omega_2} + \left\langle \phi_{0,2}, v_{\vert\Gamma_2} \right\rangle_{\Gamma_2}, \\
\left\langle \psi, \mathcal{V} \phi \right\rangle_{\Gamma_\mathrm{b}} - \sum_{i=1}^2 \left\langle \psi, \left( \frac{1}{2} + \mathcal{K}  \right) {u_i}_{\vert{\Gamma_i}} \right\rangle_{\Gamma_\mathrm{b}} 
&=  \sum_{i=1}^2 \left\langle \psi, \left( \frac{1}{2} + \mathcal{K}  \right) u_{0,i}\right\rangle_{\Gamma_\mathrm{b}} 
\end{align*}
holds $\forall \mathbf{v}:=(v_1,v_2,\psi)\in \mathcal{H}_0$.

As before we first write the problem in a compact form. 

\begin{problem} \label{problemJNBVP}
Find $\mathbf{u}:= (u_1,u_2, \phi) \in \mathcal{H}_0$ such that 
$b(\mathbf{u},\mathbf{v}) = \iota(\mathbf{v})$ holds $\forall \mathbf{v}:= (v_1,v_2,\psi) \in \mathcal{H}_0$.
\end{problem} 
\noindent Thereby,
\begin{equation*}
b(\mathbf{u},\mathbf{v}) := \sum_{i=1}^2\left( \left( \mathcal{U}_i\nabla u_i, \nabla v_i \right)_{\Omega_i} + \left\langle \phi_{|{\Gamma_i}}, {v_i}_{\vert\Gamma_i} \right\rangle_{\Gamma_i} - \left\langle \psi, \left( \frac{1}{2} + \mathcal{K}  \right) {u_i}_{\vert\Gamma_i} \right\rangle_{\Gamma_\mathrm{b}}\right) + \left\langle \psi, \mathcal{V} \phi \right\rangle_{\Gamma_\mathrm{b}},
\end{equation*}
and
\begin{equation*}
\iota(\mathbf{v}) := \sum_{i=1}^2 \left(\left( f_i, v_i \right)_{\Omega_i} + \left\langle \phi_{0,i}, {v_i}_{\vert\Gamma_i} \right\rangle_{\Gamma_i} + \left\langle \psi, \left( \frac{1}{2} + \mathcal{K}  \right) u_{0,i} \right\rangle_{\Gamma_\mathrm{b}}\right).
\end{equation*}
In this case no stabilization is needed, since both subproblems involve a Dirichlet boundary condition. Hence, we prove directly the strong monotonicity of $b(\cdot,\cdot)$. Equivalently to \eqref{inducedOperator}, the form $b(\cdot,\cdot)$ induces a non-linear operator $\mathcal{B}: \mathcal{H}_0 \rightarrow \mathcal{H}_0^\prime$ with 
\begin{equation}\label{inducedOperatorB}
\scalar{\mathcal{B} (\mathbf{u}),\mathbf{v}} := b(\mathbf{u},\mathbf{v})\quad \forall \mathbf{u},\mathbf{v} \in \mathcal{H}_0.
\end{equation}
The next theorem states the strong monotonicity of the method for the extended BVP. It can be considered as an extension to our problem setting of the stability estimate result given in \cite{of2014ellipticity} for an interior Dirichlet BVP of a diffusion equation with a hard inclusion. The key idea therein is to estimate the energy of the bounded finite element domains with the energy of some related problem in the exterior domain. If both corresponding Steklov-Poincar\'e operators are $H^{\frac{1}{2}}(\Gamma)$-elliptic, then it holds with $\lambda>0$ the minimal eigenvalue of the related exterior problem that
\begin{equation}
 \label{eq:lambda}
\lambda \scalar{S^\mathrm{ext}v,v} \leq \scalar{S^\mathrm{int}v,v}, \text{ for all } v \in H^{\frac{1}{2}}(\Gamma),
\end{equation}
where  $S^\mathrm{ext}$ and $S^\mathrm{int}$ are the Steklov-Poincar\'e operators of the exterior and the interior domain, respectively, c.f. \cite{of2014ellipticity}.
\begin{theorem} \label{lipschitzmonotoneBVP}
Let us consider the non-linear operator $\mathcal{B}: \mathcal{H}_0 \rightarrow \mathcal{H}_0^\prime$ defined in \eqref{inducedOperatorB}
with $\mathcal{H}_0= H_0^1(\Omega_1,\Gamma_{0,1})\times H_0^1(\Omega_2,\Gamma_{0,2})\times H_\star^{-\frac{1}{2}}(\Gamma_\mathrm{b})$.
Furthermore, $\lambda_1,\lambda_2>0$ are the eigenvalues of \eqref{eq:lambda} with respect to the domains $\Omega_1$ and $\Omega_2$.
% Under the assumptions \ref{A1} and \ref{A2} hold.
Then the following assertions hold. 
\begin{itemize}
 \item $\mathcal{B}$ is Lipschitz continuous, i.e., 
there exists $C_\mathrm{Lip} > 0$ such that
\begin{equation}
 \norm{\mathcal{B} (\mathbf{u})-\mathcal{B} (\mathbf{v})}_{\mathcal{H}_0^\prime} \leq C_\mathrm{Lip} \norm{\mathbf{u}-\mathbf{v}}_{\mathcal{H}_0}
 \end{equation}
for all $\mathbf{u}$, $\mathbf{v} \in \mathcal{H}_0$.
\item if $C^{\mathcal{U}_i}_\mathrm{ell}> \frac{1}{4\lambda_i}$ for $i = 1,2$, there holds that
\begin{equation}
 \label{eq:stabextended}
\scalar{\mathcal{B} (\mathbf{u})-\mathcal{B} (\mathbf{v}),\mathbf{u}-\mathbf{v}} \geq C_\mathrm{stab} \left( \norm{\nabla u_1 - \nabla v_1}_{L^2(\Omega_1)}^2 + \norm{\nabla u_2 - \nabla v_2}_{L^2(\Omega_2)}^2  + \norm{\phi - \psi}^2_\mathcal{V} \right),
\end{equation}
for all $\mathbf{u}:=(u_1,u_2,\phi)\in \mathcal{H}_0$, $\mathbf{v}:=(v_1,v_2,\psi) \in \mathcal{H}_0$ 
with \[C_\mathrm{stab} = \min\left\lbrace 1, \frac{1}{2}\left( 1 +  C^{\mathcal{U}_1}_\mathrm{ell} - \sqrt{ \left(C^{\mathcal{U}_1}_\mathrm{ell}-1\right)^2 + \frac{1}{\lambda_1} }\right),\frac{1}{2}\left( 1 +  C^{\mathcal{U}_2}_\mathrm{ell} - \sqrt{ \left(C^{\mathcal{U}_2}_\mathrm{ell}-1\right)^2 + \frac{1}{\lambda_2} }\right)\right\rbrace.
\]
\item if  $C^{\mathcal{U}_i}_\mathrm{ell}> \frac{1}{4\lambda_i}$ for $i = 1,2$, then $\mathcal{B}$ is strongly monotone, 
i.e., there exists $C_\mathrm{ell} > 0$ such that
\begin{equation}
 \label{eq:monotoneextended}
 \scalar{\mathcal{B} (\mathbf{u})-\mathcal{B} (\mathbf{v}),\mathbf{u}-\mathbf{v}} \geq C_\mathrm{ell} \norm{\mathbf{u}-\mathbf{v}}^2_{\mathcal{H}_0},
 \end{equation} 
for all $\mathbf{u}$, $\mathbf{v} \in \mathcal{H}_0$.
\end{itemize}
\end{theorem}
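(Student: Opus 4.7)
The plan is to mirror the three-part structure of Lemma~\ref{lipschitzmonotone}, now adapted to the two-domain setting along the lines of \cite{of2014ellipticity}. Lipschitz continuity of $\mathcal{B}$ is routine: for arbitrary $\mathbf{w}\in\mathcal{H}_0$ with $\norm{\mathbf{w}}_{\mathcal{H}_0}\leq 1$, I would expand $\scalar{\mathcal{B}(\mathbf{u})-\mathcal{B}(\mathbf{v}),\mathbf{w}}$ and bound the two volume contributions by Assumption~\ref{A1}, the $\mathcal{V}$- and $\mathcal{K}$-terms by continuity of these boundary integral operators, and the remaining duality pairings by the trace inequality~\eqref{traceIneq} applied patchwise on $\Gamma_1$ and $\Gamma_2$. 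Taking the supremum then delivers $C_\mathrm{Lip}$.

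For the stability estimate \eqref{eq:stabextended}, set $\mathbf{w}:=\mathbf{u}-\mathbf{v}=(w_1,w_2,\chi)$. Since $\Gamma_1$ and $\Gamma_2$ are disjoint components of $\Gamma_\mathrm{b}$, the FEM--BEM coupling terms regroup as
\[
\sum_{i=1}^2\Bigl(\scalar{\chi,{w_i}_{\vert\Gamma_i}}_{\Gamma_i} - \scalar{\chi,(\tfrac12+\mathcal{K}){w_i}_{\vert\Gamma_i}}_{\Gamma_\mathrm{b}}\Bigr)
= \scalar{\chi,(\tfrac12-\mathcal{K})\widehat{w}}_{\Gamma_\mathrm{b}},
\]
where $\widehat{w}\in H^{\frac{1}{2}}(\Gamma_\mathrm{b})$ is the glued trace equal to ${w_i}_{\vert\Gamma_i}$ on each $\Gamma_i$. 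Applying Assumption~\ref{A2} to each volume term and the identity $\scalar{\chi,\mathcal{V}\chi}=\norm{\chi}_\mathcal{V}^2$ leaves the mixed quantity $\scalar{\chi,(\tfrac12-\mathcal{K})\widehat{w}}_{\Gamma_\mathrm{b}}$ to control. Following \cite{erath2017}, I would invoke the contractivity result of \cite[Lemma~2.1]{of2013one} to dominate this term by $\norm{\chi}_\mathcal{V}\sqrt{\scalar{S^{\mathrm{ext}}_\mathrm{b}\widehat{w},\widehat{w}}_{\Gamma_\mathrm{b}}}$, with $S^{\mathrm{ext}}_\mathrm{b}$ the Steklov--Poincar\'e operator of the exterior of $\Omega_\mathrm{b}$. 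The key geometric fact is that the exterior of $\Omega_\mathrm{b}$ decomposes along the two connected components $\Gamma_1,\Gamma_2$, so $S^{\mathrm{ext}}_\mathrm{b}$ is block-diagonal with respect to this splitting; applying \eqref{eq:lambda} separately on each block converts its diagonal entries into $\tfrac{1}{\lambda_i}\scalar{S^{\mathrm{int}}_i {w_i}_{\vert\Gamma_i},{w_i}_{\vert\Gamma_i}}_{\Gamma_i} \leq \tfrac{1}{\lambda_i}\norm{\nabla w_i}_{L^2(\Omega_i)}^2$. The resulting lower bound is a positive-definite quadratic form in $\bigl(\norm{\nabla w_1}_{L^2(\Omega_1)},\norm{\nabla w_2}_{L^2(\Omega_2)},\norm{\chi}_\mathcal{V}\bigr)$; weighting the $\norm{\chi}_\mathcal{V}^2$ contribution by a convex combination and applying Young's inequality to each pair $(\norm{\nabla w_i}_{L^2(\Omega_i)},\norm{\chi}_\mathcal{V})$ produces two $2\times 2$ sub-blocks whose smallest eigenvalues coincide with the two subdomain expressions in $C_\mathrm{stab}$, positive precisely under the sharp restriction $C^{\mathcal{U}_i}_\mathrm{ell}>\tfrac{1}{4\lambda_i}$.

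Strong monotonicity \eqref{eq:monotoneextended} then follows routinely from \eqref{eq:stabextended}: Friedrichs' inequality on each $\Omega_i$, available thanks to the homogeneous Dirichlet condition on $\Gamma_{0,i}$ of positive measure, upgrades $\norm{\nabla w_i}_{L^2(\Omega_i)}$ to the full $H^1(\Omega_i)$-norm, while the norm equivalence~\eqref{normV} upgrades $\norm{\chi}_\mathcal{V}$ to $\norm{\chi}_{H^{-\frac{1}{2}}(\Gamma_\mathrm{b})}$. In contrast to Lemma~\ref{lipschitzmonotone}, no Rellich compactness argument is required, because the Dirichlet conditions on $\Gamma_{0,i}$ and the mean-zero constraint encoded in $H^{-\frac{1}{2}}_\star(\Gamma_\mathrm{b})$ already remove the constant mode that necessitated a stabilisation there. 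The hard part will be the blockwise Steklov--Poincar\'e comparison with two independent ellipticity constants: the single-domain argument of \cite{erath2017} has to be lifted to a setting where the single BEM energy $\norm{\chi}_\mathcal{V}^2$ must be shared between two FEM contributions, and identifying the correct splitting that produces the minimum over $i=1,2$ in $C_\mathrm{stab}$ with sharp threshold $\tfrac{1}{4\lambda_i}$ is the technical heart of the generalisation.
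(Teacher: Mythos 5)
Your proposal follows essentially the same route as the paper: bounding the coupling term via the contractivity of $\mathcal{K}$ and the exterior Steklov--Poincar\'e energy, converting to the interior energies through the spectral equivalence \eqref{eq:lambda} with constants $\lambda_i$, estimating the volume terms by strong monotonicity and the harmonic-extension splitting, and closing with the Young-type manipulation of \cite{erath2017} and Friedrichs' inequality (the paper likewise notes that no stabilisation is needed here). The only cosmetic difference is that you glue the two traces into one function on $\Gamma_\mathrm{b}$ and argue via a block-diagonal exterior Steklov--Poincar\'e operator, whereas the paper keeps the sum over $i=1,2$ and applies the contractivity and spectral estimates to each component separately, which amounts to the same computation.
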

%%%%%%%%%%%%%%%%%%%%%%%%%%%%%%%%%%%%%%%
\begin{proof}
The Lipschitz continuity follows merely from the Lipschitz continuity of $\mathcal{U}_1$ and $\mathcal{U}_2$ and the continuity of the boundary integral operators. \\
The stability estimate follows strongly the steps of the proofs of \cite[Theorem~2.2.ii.]{of2014ellipticity} and in \cite[Section~5.1]{of2014ellipticity}. Since we are dealing with a different BVP and non-linear material tensors, we sketch the main steps of the proof, for convenience. For ease of notation, let $\mathbf{w} := (w_1,w_2,\xi) = \mathbf{u}-\mathbf{v} = \left( u_1-v_1 ,  u_2-v_2, \phi - \psi \right) \in \mathcal{H}_0$. From \eqref{inducedOperatorB}, we get
\begin{equation} \label{Boperator}
\scalar{\mathcal{B} (\mathbf{u})-\mathcal{B} (\mathbf{v}),\mathbf{w} }:= \sum_{i=1}^2\left( \left( \mathcal{U}_i \nabla u_i - \mathcal{U}_i \nabla v_i, \nabla w_i \right)_{\Omega_i} + \left\langle \xi, \left( \frac{1}{2} - \mathcal{K}  \right) {w_i}_{\vert\Gamma_i} \right\rangle_{\Gamma_\mathrm{b}}\right) + \left\langle \xi, \mathcal{V} \xi \right\rangle_{\Gamma_\mathrm{b}}. 
\end{equation}
First, we start with the domain parts. Provided $\mathcal{U}_i$, $i=1,2$, are strongly monotone, then it holds
\begin{equation*}
\left( \mathcal{U}_i \nabla u_i - \mathcal{U}_i \nabla v_i, \nabla w_i \right)_{\Omega_i} \geq C^{\mathcal{U}_i}_\mathrm{ell}\, \norm{\nabla w_i}^2_{L^2(\Omega_i)}.
\end{equation*} 
For $w_i \in H^1_0(\Omega_i,\Gamma_{0,i}) $, we now consider the splitting $w_i = \overline{w}_i + w_{0,i}$, 
where $\overline{w}_i$ is the harmonic extension of ${w_i}_{|\Gamma_i}$ and $w_{0,i} \in H^1_0(\Omega_i,\partial \Omega_i) $ as in \cite{erath2017}, for instance.
From this follows 
\begin{equation*}
\norm{\nabla w_i}^2_{L^2(\Omega_i)} =  \norm{\nabla w_{0,i}}^2_{L^2(\Omega_i)} +  \scalar{ S_i {w_i}_{|\Gamma_i},{w_i}_{|\Gamma_i} }_{\Gamma_i}, 
\end{equation*}
where $S_i$, $i=1,2$, denote the interior Steklov-Poincar\'e operators of the bounded domains $\Omega_1$ and $\Omega_2$, respectively. Hence, 
\begin{equation}\label{splitting}
\left( \mathcal{U}_i \nabla u_i - \mathcal{U}_i \nabla v_i, \nabla w_i \right)_{\Omega_i} \geq C^{\mathcal{U}_i}_\mathrm{ell} \left(  \norm{\nabla w_{0,i}}^2_{L^2(\Omega_i)} +  \scalar{ S_i {w_i}_{|\Gamma_i},{w_i}_{|\Gamma_i} }_{\Gamma_i}\right).
\end{equation} 
Next, by using the contractivity of $\mathcal{K}$, as given in \cite[Lemma~2.1]{of2014ellipticity} (we consider here the worst case $C_\mathcal{K} =1$), as well as the invertibility of $\mathcal{V}$, we obtain
\begin{equation*}
  \left\langle \xi, \left( \frac{1}{2} - \mathcal{K}  \right) {w_i}_{\vert\Gamma_i} \right\rangle_{\Gamma_\mathrm{b}} \leq \norm{\xi}_\mathcal{V} \sqrt{ \scalar{S_i^\mathrm{ext} {w_i}_{\vert\Gamma_i},{w_i}_{\vert\Gamma_i}}_{\Gamma_\mathrm{b}}}, \quad i = 1,2,
 \end{equation*}
where $S_i^\mathrm{ext}:  H^{\frac{1}{2}}(\Gamma_\mathrm{b}) \rightarrow  H^{-\frac{1}{2}}(\Gamma_\mathrm{b})$ are the Steklov-Poincar\'e operators associated to the corresponding exterior eigenvalue problem, see \cite[Section~2.2]{of2014ellipticity}. Similarly, we assume the following spectral equivalence
\begin{equation*}
\scalar{S_i^\mathrm{ext}{w_i}_{\vert\Gamma_i},{w_i}_{\vert\Gamma_i}}_{\Gamma_i} \leq \frac{1}{\lambda_i } \scalar{S_i {w_i}_{\vert\Gamma_i},{w_i}_{\vert\Gamma_i}}_{\Gamma_i}, \text{ for all } w_i \in H_0^1(\Omega_i,\Gamma_{0,i}),
\end{equation*}
where $\lambda_i$, $i = 1,2$ are characterized as minimal eigenvalues of the related problem. Thus, 
\begin{equation}\label{contractivity}
  \left\langle \xi, \left( \frac{1}{2} - \mathcal{K}  \right) {w_i}_{\vert\Gamma_i} \right\rangle_{\Gamma_\mathrm{b}} \leq \norm{\xi}_\mathcal{V} \sqrt{ \frac{1}{\lambda_i } \scalar{S_i {w_i}_{\vert\Gamma_i},{w_i}_{\vert\Gamma_i}}_{\Gamma_i}}, \quad i = 1,2.
 \end{equation}
 Inserting \eqref{splitting} and \eqref{contractivity} in \eqref{Boperator}, 
 $\left\langle \xi, \mathcal{V} \xi \right\rangle_{\Gamma_\mathrm{b}} = \norm{\xi}_\mathcal{V}^2$
 and some manipulations as in the proof of \cite[Theorem1]{erath2017}
 
 lead to the assertion.\\
 To prove the last claim we consider $\mathbf{v} := \left( v_1,v_2, \psi \right) \in \mathcal{H}_0$.
 Note that $v_1=0$ on $\Gamma_{0,1}$ and $v_2=0$ on $\Gamma_{0,2}$ with $|\Gamma_{0,1}|,|\Gamma_{0,2}|>0$.
 Due to Friedrich's inequality and \eqref{normV} it follows that
 $\norm{\nabla v_1}_{L^2(\Omega_1)}^2+\norm{\nabla v_2}_{L^2(\Omega_2)}^2 + \norm{\psi}^2_\mathcal{V}$
 is an equivalent norm on $\mathcal{H}_0$. Thus, \eqref{eq:monotoneextended} follows directly from \eqref{eq:stabextended}.
\end{proof}%

Equivalently to Theorem \ref{dataDep}, the strong monotonicity and the Lipschitz continuity of the non-linear operator $\mathcal{B}$ 
yields the well-posedness of Problem \ref{problemJNBVP} 
for any $\left(f_i,u_{0,i},\phi_{0,i} \right)  \in {H^1(\Omega_i)}^\prime \times H^{\frac{1}{2}}(\Gamma_i) \times H^{-\frac{1}{2}}(\Gamma_i)$ with $i=1,2$. \\
As for the interface problem, we consider a conforming Galerkin discretization in the sense of an isogeometric FEM-BEM discretization. 
Namely, the discrete problem is obtained by replacing 
$\mathbf{u} := (u_1,u_2,\phi) \in \mathcal{H}_0 := H_0^1(\Omega_1,\Gamma_{0,1})\times H_0^1(\Omega_2,\Gamma_{0,2})\times H_\star^{-\frac{1}{2}}(\Gamma_\mathrm{b})$ 
in Problem \ref{problemJNBVP} with 
$\mathbf{u}_\ell := (u_{1,\ell},u_{2,\ell},\phi_\ell) \in \mathcal{H}_{0,\ell}:=\mathbb{S}^0(\Omega_1,\Gamma_{0,1}) \times \mathbb{S}^0(\Omega_2,\Gamma_{0,2}) \times \mathbb{S}^2(\Gamma_\mathrm{b})$. 
Note that accordingly to the notation in the continuous setting, $\mathbb{S}^0(\Omega,\Gamma)$ 
denotes the B-Spline space $\mathbb{S}^0(\Omega)$ of order $p$ as defined in \eqref{S0} with a Dirichlet boundary $\Gamma \subseteq \partial \Omega$
and $\mathbb{S}^2(\Gamma_\mathrm{b})$ is defined in~\eqref{S2}. 

\begin{problem}
 \label{discreteextended}
 Find $\mathbf{u}_\ell := (u_{1,\ell},u_{2,\ell},\phi_\ell) \in \mathcal{H}_{0,\ell}:=\mathbb{S}^0(\Omega_1,\Gamma_{0,1}) \times \mathbb{S}^0(\Omega_2,\Gamma_{0,2}) \times \mathbb{S}^2(\Gamma_\mathrm{b})$
 such that $b(\mathbf{u}_\ell,\mathbf{v}_\ell) = \iota(\mathbf{v}_\ell)$ holds $\forall \mathbf{v}_\ell:= (v_{1,\ell},v_{2,\ell},\psi_\ell) \in \mathcal{H}_{0,\ell}$.
\end{problem}

Analogously to the interface problem, we state in the following theorem the quasi-optimality in the sense of the C\'ea-type Lemma of 
the Galerkin discretization of Problem \ref{problemJNBVP}, as well as an a~priori error estimate for the introduced B-Spline discretization.
To simplify the presentation, we introduce in this section a piecewise defined product space 
\begin{align}
 \label{eq:Hspw}
 \mathcal{H}_\mathrm{pw}^s:=\big(H^1(\Omega_1)\cap H_\mathrm{pw}^{1+s}(\Omega_1)\big)
 \times \big(H^1(\Omega_2)\cap H_\mathrm{pw}^{1+s}(\Omega_2)\big)
 \times \big(H^{-\frac{1}{2}}_\star(\Gamma_\mathrm{b})\cap H_\mathrm{pw}^{-\frac{1}{2}+s}(\Gamma_\mathrm{b})\big)
\end{align}
for $s\geq 0$, which is used to get convergence rates with the aid of Lemma \ref{approxProp}.
The corresponding norm defined in the sense of \eqref{eq:pwnorm} is denoted by $\Vert \cdot \Vert_{\mathcal{H}_\mathrm{pw}^s}$.
\begin{theorem}\label{quasiOpt&Apriori}
For $i=1,2$, let $C^{\mathcal{U}_i}_\mathrm{ell}> \frac{1}{4\lambda_i}$, where
$\lambda_i>0$ are the eigenvalues of \eqref{eq:lambda} with respect to the domains $\Omega_i$.
 Moreover, 
let $\mathbf{u}\in \mathcal{H}_0$ be the solution of Problem \ref{problemJNBVP} and $\mathbf{u}_\ell \in \mathcal{H}_{0,\ell}$ 
be the discrete solution of Problem~\ref{discreteextended}. Then the following results hold:
\begin{itemize}
\item Quasi-optimality: 
\begin{equation}\label{quasi}
\Vert \mathbf{u} - \mathbf{u}_\ell \Vert_{\mathcal{H}_0} \leq C_\text{C\'ea} \min_{\mathbf{v}_\ell\in \mathcal{H}_{0,\ell}} \Vert \mathbf{u} - \mathbf{v}_\ell \Vert_{\mathcal{H}_0},
\end{equation}
where $C_\text{C\'ea} = \frac{C_\mathrm{Lip}}{C_\mathrm{ell}}$. 
\item A~priori estimate: For $\frac{1}{2}\leq s\leq p$, there holds with $\mathbf{u}\in \mathcal{H}_\mathrm{pw}^s$
\begin{equation*}
\Vert \mathbf{u} - \mathbf{u}_\ell \Vert_{\mathcal{H}_0} \leq C\, h^s \Vert \mathbf{u} \Vert_{\mathcal{H}_\mathrm{pw}^s}
\end{equation*}
with a constant $C>0$.
For $0\leq s\leq \frac{1}{2}$, there holds a result similar to Theorem \ref{aPrioriEstimate} 
with $\phi\in H^{-\frac{1}{2}}_\star(\Gamma_\mathrm{b})\cap H_\mathrm{pw}^{0}(\Gamma_\mathrm{b})$.
\end{itemize}
\end{theorem}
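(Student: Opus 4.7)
The overall plan is to proceed exactly in parallel to Theorems~\ref{cea} and~\ref{aPrioriEstimate}, now using the Lipschitz continuity and strong monotonicity of $\mathcal{B}$ established in Theorem~\ref{lipschitzmonotoneBVP} together with the piecewise approximation estimates of Lemma~\ref{approxProp}. The three ingredients are: conformity of $\mathcal{H}_{0,\ell}$ in $\mathcal{H}_0$ so that Galerkin orthogonality holds, the C\'ea-type lemma for strongly monotone Lipschitz operators \cite[Corollary~25.7]{zeidler}, and the componentwise application of Lemma~\ref{approxProp}.

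For the quasi-optimality \eqref{quasi}, I would first observe that, because $\mathcal{H}_{0,\ell}\subset\mathcal{H}_0$ is a conforming subspace, the induced discrete operator inherits strong monotonicity and Lipschitz continuity from $\mathcal{B}$, hence Problem~\ref{discreteextended} admits a unique solution $\mathbf{u}_\ell$ by the monotone-operator theorem. I would then repeat the chain of inequalities from Theorem~\ref{cea}: for any admissible $\mathbf{v}_\ell\in\mathcal{H}_{0,\ell}$, using $\mathbf{u}_\ell-\mathbf{v}_\ell\in\mathcal{H}_{0,\ell}$, the strong monotonicity of $\mathcal{B}$ together with Galerkin orthogonality and Lipschitz continuity yields
\begin{equation*}
C_\mathrm{ell}\norm{\mathbf{u}-\mathbf{u}_\ell}_{\mathcal{H}_0}^2 \leq \scalar{\mathcal{B}(\mathbf{u})-\mathcal{B}(\mathbf{u}_\ell),\mathbf{u}-\mathbf{u}_\ell}=\scalar{\mathcal{B}(\mathbf{u})-\mathcal{B}(\mathbf{u}_\ell),\mathbf{u}-\mathbf{v}_\ell}\leq C_\mathrm{Lip}\norm{\mathbf{u}-\mathbf{u}_\ell}_{\mathcal{H}_0}\norm{\mathbf{u}-\mathbf{v}_\ell}_{\mathcal{H}_0}.
\end{equation*}
Dividing and taking the infimum over $\mathbf{v}_\ell$ gives \eqref{quasi} with $C_\text{C\'ea}=C_\mathrm{Lip}/C_\mathrm{ell}$.

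For the a~priori error estimate I would insert the componentwise approximation bounds from Lemma~\ref{approxProp} into the right-hand side of \eqref{quasi}. The two FEM factors are approximated at rate $h^s$ in $\mathbb{S}^0(\Omega_i,\Gamma_{0,i})$ for $0\leq s\leq p$, while the BEM factor is approximated at rate $h^s$ in $\mathbb{S}^2(\Gamma_\mathrm{b})$ for $\frac{1}{2}\leq s\leq p+\frac{1}{2}$. The intersection of these admissible ranges produces the stated range $\frac{1}{2}\leq s\leq p$ and the bound $C\,h^s\norm{\mathbf{u}}_{\mathcal{H}_\mathrm{pw}^s}$. The low-regularity regime $0\leq s\leq\frac{1}{2}$ follows the same template as in Theorem~\ref{aPrioriEstimate}: the BEM part is bounded only under the weaker assumption $\phi\in H_\mathrm{pw}^0(\Gamma_\mathrm{b})$, so the BEM contribution converges at the fixed rate $h^{1/2}$, and the joint (minimum) rate becomes $h^s$.

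The main subtlety I anticipate is ensuring true conformity of the discrete BEM space in $H^{-\frac{1}{2}}_\star(\Gamma_\mathrm{b})$, since the space $\mathbb{S}^2(\Gamma_\mathrm{b})$ defined in \eqref{S2} does not automatically enforce $\scalar{\phi_\ell,1}_{\Gamma_\mathrm{b}}=0$. To close this gap one either intersects $\mathbb{S}^2(\Gamma_\mathrm{b})$ with the constraint (applying, in the approximation argument, a stable quasi-interpolant followed by subtraction of a suitable constant to restore the zero-mean condition, which is innocuous in $H^{-\frac{1}{2}}$) or one adopts the orthogonal-decomposition viewpoint of \cite{of2014ellipticity} mentioned in the remark preceding Theorem~\ref{lipschitzmonotoneBVP}. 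In both cases the rates of Lemma~\ref{approxProp} carry over unchanged, so the conclusion of the theorem is unaffected.
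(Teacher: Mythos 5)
Your proof is correct and takes essentially the same route as the paper, whose proof is a two-line reduction to the C\'ea-type argument of Theorem~\ref{cea} (strong monotonicity, Galerkin orthogonality, and Lipschitz continuity of $\mathcal{B}$ from Theorem~\ref{lipschitzmonotoneBVP}) followed by the componentwise application of Lemma~\ref{approxProp}, exactly as you spell out. Your additional care about the zero-mean conformity of $\mathbb{S}^2(\Gamma_\mathrm{b})$ in $H^{-\frac{1}{2}}_\star(\Gamma_\mathrm{b})$ addresses a point the paper treats only implicitly via the remark on the orthogonal decomposition of \cite{of2014ellipticity}, and your resolution is consistent with that remark.
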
 
\begin{proof}
Quasi-optimality follows from the strong monotonicity and Lipschitz continuity stated in Theorem \ref{lipschitzmonotoneBVP}, by following the lines of 
Theorem \ref{cea}. The a~priori estimate follows from the quasi-optimality and Lemma \ref{approxProp}, as is done in Theorem \ref{aPrioriEstimate} for the interface problem.
\end{proof}
The non-linear operators $\mathcal{U}_i$, $i = 1,2$, are now considered to have the form $\mathcal{U}_i\nabla u:=g_i(|\nabla u|)\nabla u$ with non-linear functions $g_i: \R\rightarrow \R$. 
Similarly to the interface problem, we state the following stability result.
\begin{lemma}\label{dataDep_interfaceBVP}
Let $C^{\mathcal{U}_i}_\mathrm{ell}> \frac{1}{4\lambda_i}$, $i=1,2$, with
$\lambda_i$ as in Theorem \ref{quasiOpt&Apriori}. 
Furthermore, the non-linear operators $\mathcal{U}_i$, $i = 1,2$, shall have the form 
$\mathcal{U}_i\nabla u:=g_i(|\nabla u|)\nabla u$ with the non-linear functions $g_i: \R\rightarrow \R$.
Moreover, let $\mathbf{u}\in \mathcal{H}_0$ be the unique solution of Problem \ref{problemJNBVP} and $\left(f_i,u_{0,i},\phi_{0,i} \right)  \in {H^1(\Omega_i)}^\prime \times H^{\frac{1}{2}}(\Gamma_i) \times H^{-\frac{1}{2}}(\Gamma_i)$, with $i=1,2$, be some suitable inputs. There exists $C > 0$ such that 
 \begin{equation*}
 \norm{\mathbf{u}}_{\mathcal{H}_0} \leq C \sum_{i=1}^2 \left( \norm{f_i}_{{H^1(\Omega_i)}^\prime} + \norm{u_{i,0}}_{H^{\frac{1}{2}}(\Gamma_i)} + \norm{\phi_{i,0}}_{H^{-\frac{1}{2}}(\Gamma_i)} \right).
 \end{equation*}
\end{lemma}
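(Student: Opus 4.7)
The plan is to mimic the proof of Lemma~\ref{dataDep_interface} closely, with the main difference being that we must track terms on each of the two subdomains $\Omega_1$, $\Omega_2$ and their boundary pieces $\Gamma_1$, $\Gamma_2$ of $\Gamma_\mathrm{b}$. First, I exploit the special form $\mathcal{U}_i \nabla u = g_i(|\nabla u|)\nabla u$: it gives $\mathcal{U}_i \nabla 0 = 0$ for $i=1,2$, and thus $b(\mathbf{0},\mathbf{v}) = 0$ for every $\mathbf{v}\in\mathcal{H}_0$, i.e.\ $\mathcal{B}(\mathbf{0}) = 0$ in $\mathcal{H}_0'$. Note that $\mathbf{0} \in \mathcal{H}_0$ is admissible since the zero function has zero trace on the Dirichlet parts $\Gamma_{0,i}$ and lies in $H_\star^{-\frac{1}{2}}(\Gamma_\mathrm{b})$.

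Next, I invoke the strong monotonicity of $\mathcal{B}$ established in Theorem~\ref{lipschitzmonotoneBVP} (which applies since $C^{\mathcal{U}_i}_\mathrm{ell} > \frac{1}{4\lambda_i}$), together with the fact that $\mathbf{u}$ solves Problem~\ref{problemJNBVP}:
\begin{equation*}
C_\mathrm{ell}\, \|\mathbf{u}\|_{\mathcal{H}_0}^2 \leq \scalar{\mathcal{B}(\mathbf{u}) - \mathcal{B}(\mathbf{0}),\, \mathbf{u}-\mathbf{0}} = b(\mathbf{u},\mathbf{u}) = \iota(\mathbf{u}).
\end{equation*}

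It then remains to bound each of the six terms in $\iota(\mathbf{u})$. For $i=1,2$ I estimate the volume term by duality, $|(f_i,u_i)_{\Omega_i}| \leq \|f_i\|_{H^1(\Omega_i)'}\|u_i\|_{H^1(\Omega_i)}$; the Neumann jump term with the trace inequality \eqref{traceIneq}, $|\scalar{\phi_{0,i},u_{i|\Gamma_i}}_{\Gamma_i}| \leq C_\mathrm{tr}\|\phi_{0,i}\|_{H^{-\frac{1}{2}}(\Gamma_i)}\|u_i\|_{H^1(\Omega_i)}$; and the Dirichlet jump term with the duality pairing on $\Gamma_\mathrm{b}$ and the boundedness of $\mathcal{K}$, $|\scalar{\phi,(\tfrac{1}{2}+\mathcal{K})u_{0,i}}_{\Gamma_\mathrm{b}}| \leq (\tfrac{1}{2}+C^\mathcal{K})\|\phi\|_{H^{-\frac{1}{2}}(\Gamma_\mathrm{b})}\|u_{0,i}\|_{H^{\frac{1}{2}}(\Gamma_i)}$, where $u_{0,i}$ is tacitly extended by zero to $\Gamma_\mathrm{b}$. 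Summing over $i$ and using $\|u_i\|_{H^1(\Omega_i)},\,\|\phi\|_{H^{-\frac{1}{2}}(\Gamma_\mathrm{b})} \leq \|\mathbf{u}\|_{\mathcal{H}_0}$ yields a bound of the form
\begin{equation*}
C_\mathrm{ell}\,\|\mathbf{u}\|_{\mathcal{H}_0}^2 \leq C\,\|\mathbf{u}\|_{\mathcal{H}_0}\sum_{i=1}^2\Big(\|f_i\|_{H^1(\Omega_i)'}+\|u_{0,i}\|_{H^{\frac{1}{2}}(\Gamma_i)}+\|\phi_{0,i}\|_{H^{-\frac{1}{2}}(\Gamma_i)}\Big),
\end{equation*}
and dividing by $\|\mathbf{u}\|_{\mathcal{H}_0}$ gives the assertion.

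I do not expect any genuine obstacle here: once the trick $\mathcal{U}_i\nabla 0 = 0$ has been used to eliminate $\mathcal{B}(\mathbf{0})$, everything reduces to Cauchy--Schwarz combined with the continuity of $\mathcal{V}$ and $\mathcal{K}$ and the trace inequality. The only mild subtlety is bookkeeping: the integral operator terms are evaluated on $\Gamma_\mathrm{b} = \Gamma_1\cup\Gamma_2$ while the data $u_{0,i}$, $\phi_{0,i}$ live only on $\Gamma_i$, so one must be slightly careful to state the bounds on the correct portion of the boundary.
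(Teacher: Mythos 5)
Your proposal is correct and follows essentially the same route as the paper's proof: both exploit that the specific non-linearity gives $\mathcal{U}_i\nabla 0=0$, test the strong monotonicity of $\mathcal{B}$ against $\mathbf{v}=(0,0,0)$ to obtain $C_\mathrm{ell}\norm{\mathbf{u}}^2_{\mathcal{H}_0}\leq \iota(\mathbf{u})$, and then bound the terms of $\iota(\mathbf{u})$ via duality, the trace inequality \eqref{traceIneq}, and the boundedness of $\mathcal{K}$. Your additional bookkeeping remark about the data living on $\Gamma_i$ versus the pairings on $\Gamma_\mathrm{b}$ is a sensible clarification that the paper leaves implicit.
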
%
\begin{proof}
We know from the strong monotonicity of $\mathcal{B}$ that
\begin{equation*}
C_\mathrm{ell} \norm{\mathbf{u}-\mathbf{v}}^2_{\mathcal{H}_0} \leq \scalar{\mathcal{B} (\mathbf{u})-\mathcal{B} (\mathbf{v}),\mathbf{u}-\mathbf{v}}
\end{equation*}
holds for all $\mathbf{u}$, $\mathbf{v} \in \mathcal{H}_0$. Without loss of generality, 
we choose $\mathbf{v} = \left(0,0,0\right)$ and note that $\mathcal{U}_i\nabla v_i=0$, $i =1,2$, for our specific non-linearity.  
Since $\mathbf{u}:= \left(u_1,u_2,\phi \right)$ is the unique solution of the problem,
we conclude that 
\begin{align*}
C_\mathrm{ell} \norm{\mathbf{u}}^2_{\mathcal{H}_0} &\leq \scalar{\mathcal{B} (\mathbf{u}),\mathbf{u}} = \iota(\mathbf{u}), \\
&= \sum_{i=1}^2 \left(\left( f_i, u_i \right)_{\Omega_i} + \left\langle \phi_{0,i}, {u_i}_{\vert\Gamma_i} \right\rangle_{\Gamma_i} + \left\langle \phi, \left( \frac{1}{2} + \mathcal{K}  \right) u_{0,i} \right\rangle_{\Gamma_\mathrm{b}}\right) .
\end{align*}
Using inequality \eqref{traceIneq} along with the boundedness of $\mathcal{K}$ and $\mathcal{V}$, and rearranging the terms yields the assertion. 
\end{proof}%

In many practical applications, one is not directly interested in the solution $(u_1,u_2,\phi)$ of Problem \ref{problemJNBVP} rather than 
in some derived quantities. These quantities are, for example, evaluated in the exterior/air gap domain. As it can be observed for standalone BEM applications, 
estimating the error in functionals of the solution may lead to a so called super-convergence, i.e., 
linear functionals of the solution may converge better than the solution in the energy norm, see \cite[Section~4.2.5]{sauter}. 
With enough regularity the convergence rate doubles.  \\ 
In the following, this behavior is also showed for the coupled problem. For this, we use the following Aubin-Nitsche argument, similarly to \cite[Theorem~4.2.14]{sauter}.
\begin{theorem}\label{aubinNitsche}
Let $\mathfrak{F}\in\mathcal{H}_0'$ be a continuous and linear functional on the solution 
$\mathbf{u} := (u_1,u_2,\phi) \in \mathcal{H}_0:= H_0^1(\Omega_1,\Gamma_{0,1})\times H_0^1(\Omega_2,\Gamma_{0,2})\times H_\star^{-\frac{1}{2}}(\Gamma_\mathrm{b})$ 
of Problem \ref{problemJNBVP} and $\mathbf{u}_\ell := (u_{1,\ell},u_{2,\ell},\phi_\ell) \in \mathcal{H}_{0,\ell}$
is the discrete solution of Problem~\ref{discreteextended}. 
Furthermore, let $\mathbf{w} \in \mathcal{H}_0$ be the unique solution of the dual problem
\begin{equation} \label{dualP}
b(\mathbf{v},\mathbf{w}) = \mathfrak{F}(\mathbf{v}),
\end{equation} 
for all $\mathbf{v} \in \mathcal{H}_0$. 
Then there exists a constant $C_1>0$ such that
\begin{align}
 \label{functional1}
 \vert \mathfrak{F}(\mathbf{u}) - \mathfrak{F}(\mathbf{u}_\ell) \vert &
 \leq C_1 \,   \Vert \mathbf{u} - \mathbf{v}_\ell \Vert_{\mathcal{H}_0} \Vert \mathbf{w} - \mathbf{z}_\ell \Vert_{\mathcal{H}_0}
\end{align}
for arbitrary $\mathbf{v}_\ell\in \mathcal{H}_{0,\ell}$, $\mathbf{z}_\ell\in\mathcal{H}_{0,\ell}$.
Furthermore, let $\frac{1}{2} \leq s,t \leq p$ and remember the product space defined in \eqref{eq:Hspw}.
Provided $\mathbf{u}$ and $\mathbf{w}$ are additionally 
in $\mathcal{H}_\mathrm{pw}^s$ and $\mathcal{H}_\mathrm{pw}^t$, respectively, 
there exists a constant $C_2>0$ such that 
\begin{align}
 \label{functional2}
\vert \mathfrak{F}(\mathbf{u}) - \mathfrak{F}(\mathbf{u}_\ell) \vert &
\leq C_2 \,  h^{s+t} \Vert \mathbf{u}\Vert_{\mathcal{H}_\mathrm{pw}^s} \Vert \mathbf{w} \Vert_{\mathcal{H}_\mathrm{pw}^t}.
\end{align}
\end{theorem}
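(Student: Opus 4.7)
The plan is a classical Aubin--Nitsche duality argument adapted to the semilinear setting. The dual equation \eqref{dualP} is applied at the two distinguished points $\mathbf{v}=\mathbf{u}$ and $\mathbf{v}=\mathbf{u}_\ell$, and subtracted to produce
\begin{equation*}
\mathfrak{F}(\mathbf{u})-\mathfrak{F}(\mathbf{u}_\ell)=b(\mathbf{u},\mathbf{w})-b(\mathbf{u}_\ell,\mathbf{w}).
\end{equation*}
Since both $\mathbf{u}$ and $\mathbf{u}_\ell$ solve their primal problems with the same right-hand side $\iota$, Galerkin orthogonality gives $b(\mathbf{u},\mathbf{z}_\ell)=\iota(\mathbf{z}_\ell)=b(\mathbf{u}_\ell,\mathbf{z}_\ell)$ for every $\mathbf{z}_\ell\in\mathcal{H}_{0,\ell}$. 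Subtracting this vanishing quantity and exploiting linearity of $b$ in the second argument rewrites the functional error as
\begin{equation*}
\mathfrak{F}(\mathbf{u})-\mathfrak{F}(\mathbf{u}_\ell)=\scalar{\mathcal{B}(\mathbf{u})-\mathcal{B}(\mathbf{u}_\ell),\,\mathbf{w}-\mathbf{z}_\ell},
\end{equation*}
with $\mathcal{B}$ the induced operator from \eqref{inducedOperatorB}.

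Next I will apply the Lipschitz continuity of $\mathcal{B}$ stated in Theorem \ref{lipschitzmonotoneBVP} to bound the right-hand side by $C_\mathrm{Lip}\norm{\mathbf{u}-\mathbf{u}_\ell}_{\mathcal{H}_0}\norm{\mathbf{w}-\mathbf{z}_\ell}_{\mathcal{H}_0}$, and then the quasi-optimality \eqref{quasi} of Theorem \ref{quasiOpt&Apriori} to further replace $\norm{\mathbf{u}-\mathbf{u}_\ell}_{\mathcal{H}_0}$ by $C_\text{C\'ea}\norm{\mathbf{u}-\mathbf{v}_\ell}_{\mathcal{H}_0}$ for arbitrary $\mathbf{v}_\ell\in\mathcal{H}_{0,\ell}$. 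This produces \eqref{functional1} with $C_1=C_\mathrm{Lip}\,C_\text{C\'ea}$. The super-convergent estimate \eqref{functional2} is then obtained by choosing $\mathbf{v}_\ell$ and $\mathbf{z}_\ell$ as componentwise best approximants of $\mathbf{u}$ and $\mathbf{w}$ from the B-Spline product space $\mathcal{H}_{0,\ell}$ and invoking Lemma \ref{approxProp} on each component $H^1(\Omega_i)$ and $H^{-\frac{1}{2}}(\Gamma_\mathrm{b})$ separately. Summing the three component contributions yields $\norm{\mathbf{u}-\mathbf{v}_\ell}_{\mathcal{H}_0}\leq C\,h^s\norm{\mathbf{u}}_{\mathcal{H}_\mathrm{pw}^s}$ and likewise $h^t\norm{\mathbf{w}}_{\mathcal{H}_\mathrm{pw}^t}$, whose product realises the doubled rate $h^{s+t}$.

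The subtle point will be the nonlinearity of $b$ in its first slot: one cannot split $b(\mathbf{u}-\mathbf{u}_\ell,\cdot)$ the way one would for a bilinear form. I therefore keep the difference throughout in the controllable form $b(\mathbf{u},\cdot)-b(\mathbf{u}_\ell,\cdot)=\scalar{\mathcal{B}(\mathbf{u})-\mathcal{B}(\mathbf{u}_\ell),\cdot}$, which is a genuinely linear functional in its argument with operator norm bounded by $C_\mathrm{Lip}\norm{\mathbf{u}-\mathbf{u}_\ell}_{\mathcal{H}_0}$ via Theorem \ref{lipschitzmonotoneBVP}. In this way the dual relation \eqref{dualP} is only ever used at the two specific test points $\mathbf{v}=\mathbf{u}$ and $\mathbf{v}=\mathbf{u}_\ell$, which avoids any question about existence of a dual solution valid simultaneously on the whole nonlinear space $\mathcal{H}_0$.
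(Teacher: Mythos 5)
Your proposal is correct and follows essentially the same Aubin--Nitsche route as the paper: dual problem, Galerkin orthogonality, Lipschitz continuity, quasi-optimality, and the approximation estimates of Lemma \ref{approxProp}. The one place you genuinely improve on the paper's sketch is the treatment of the nonlinearity: the paper writes the key identity as $b(\mathbf{u}-\mathbf{u}_\ell,\mathbf{w}-\mathbf{z}_\ell)$, which is an abuse of notation since $b$ is not linear in its first slot, whereas you correctly keep the difference in the form $\scalar{\mathcal{B}(\mathbf{u})-\mathcal{B}(\mathbf{u}_\ell),\,\mathbf{w}-\mathbf{z}_\ell}$, use linearity only in the second argument to subtract the Galerkin orthogonality $b(\mathbf{u},\mathbf{z}_\ell)-b(\mathbf{u}_\ell,\mathbf{z}_\ell)=0$, and then invoke the Lipschitz bound on $\mathcal{B}$ from Theorem \ref{lipschitzmonotoneBVP}. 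This is the rigorous version of what the paper intends, and your observation that \eqref{dualP} is only ever evaluated at $\mathbf{v}=\mathbf{u}$ and $\mathbf{v}=\mathbf{u}_\ell$ is apt; the only remaining delicacy, which the theorem's hypotheses absorb by assumption, is the existence of the dual solution $\mathbf{w}$ itself, which the paper addresses (briefly) by appealing to Theorem \ref{lipschitzmonotoneBVP} for the dual formulation.
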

\begin{proof}
The proof follows strongly the lines in \cite[Theorem~4.2.14]{sauter}. Since we allow non-linearities, we give a brief sketch.
First of all, we note that Theorem \ref{lipschitzmonotoneBVP} holds for arbitrary functions. Thus, well-posedness, and hence the existence of a unique solution can be established also for the dual problem~\eqref{dualP}. 
Furthermore, the dual problem \eqref{dualP}, the Galerkin 
orthogonality $b(\mathbf{u}-\mathbf{u}_\ell,\mathbf{z}_\ell)=0$ for all $\mathbf{z}_\ell\in\mathcal{H}_{0,\ell}$, 
and the Lipschitz continuity of the form $b(\cdot,\cdot)$ yield to
\begin{equation*} 
\vert \mathfrak{F}(\mathbf{u}) - \mathfrak{F}(\mathbf{u}_\ell) \vert 
=\vert\mathfrak{F}(\mathbf{u}-\mathbf{u}_\ell)\vert
=\vert b(\mathbf{u}-\mathbf{u}_\ell,\mathbf{w}-\mathbf{z}_\ell)\vert
\leq \widetilde{C} \Vert \mathbf{u} - \mathbf{u}_\ell \Vert_{\mathcal{H}_0} \Vert \mathbf{w} - \mathbf{z}_\ell \Vert_{\mathcal{H}_0},
\end{equation*}
for arbitrary $\mathbf{z}_\ell = (z_{1,\ell},z_{2,\ell},\varphi_\ell) \in \mathcal{H}_{0,\ell}$.

With~\eqref{quasi} we get the claim~\eqref{functional1}.
Since Lemma \ref{approxProp}  holds for arbitrary $\mathbf{u}$, \eqref{functional2} follows from \eqref{functional1}.
\end{proof}

\begin{remark}\label{rem:dependence_pos}
In practice the functional of Theorem \ref{aubinNitsche} may be, e.g., the representation formula of the BEM part $\Omega_{\mathrm{b}}$, i.e.,
for $\mathbf{u}=(u_1,u_2,\phi)\in\mathcal{H}_0$ there holds
\begin{equation*}
 \mathfrak{F}(\mathbf{u}):=\sum_{i=1}^2\Big(\int_{\Gamma_i} G(x,y)\phi_{\vert\Gamma_i}\opd \sigma_y
 - \int_{\Gamma_i}\partial_{\nu(y)}G(x,y)(u_{i\vert\Gamma_i}+u_{0,i\vert\Gamma_i})\opd \sigma_y\Big).
\end{equation*}
Next, let us assume the regularity $\mathbf{u}\in\mathcal{H}_\mathrm{pw}^p$ of the solution \ref{problemJNBVP}  
 and $\mathbf{w}\in\mathcal{H}_\mathrm{pw}^p$
 of its dual problem ~\eqref{dualP}, where the spaces are defined in \eqref{eq:Hspw}.
Then with the discrete solution $\mathbf{u}_\ell \in \mathcal{H}_{0,\ell}$ and \eqref{functional2}
 we calculate the pointwise error in $\Omega_{\mathrm{b}}$ as
\begin{equation}
 \label{eq:superconvergence}
|u_{\mathrm{b}}(x) - u_{\mathrm{b},\ell}(x)|=|\mathfrak{F}(\mathbf{u})(x)-\mathfrak{F}(\mathbf{u_\ell})(x)|\leq C h^{2p},
\end{equation}
which is the maximal possible super-convergence. 
 Since the constant $C$ depends on $\Vert \mathbf{u}\Vert_{\mathcal{H}_\mathrm{pw}^p}$ 
 and $\Vert \mathbf{w} \Vert_{\mathcal{H}_\mathrm{pw}^p}$, a possible estimate of these norms would probably 
 involve their right-hand sides.
 The right-hand side of the dual problem~\eqref{dualP} is the functional $\mathfrak{F}(\mathbf{u})$.
 Thus, the constant $C$ might include a 
 factor like $\sum_{i=1}^2 \left( \norm{G(x,\cdot)}_{H^{\frac{1}{2}+p}(\Gamma_i)} + \norm{\partial_\nu G(x,\cdot)}_{H^{-\frac{1}{2}+p}(\Gamma_i)} \right)$. 
Note that this term is finite for all $ x  \in \R^2\backslash\Gamma_\mathrm{b}$ and $p\geq 0 $. 
However, because of the singularity of the kernels, 
its tends to infinity when approaching the boundaries. 
Thus, also $C$ from \eqref{eq:superconvergence} might tend to infinity.
This effect is even more severe, if we consider functionals that 
involve derivatives of the kernels, e.g., for the computation of forces and torques using the Maxwell Stress Tensor. 
Finally, we mention that the regularity assumptions might only hold for smooth surfaces.
 \end{remark}
\begin{remark}
 \label{rem:evaluation}
 In the linear case the dual problem to Problem \ref{problemJN} or Problem \ref{problemJNBVP} is the corresponding Bielak-MacCamy coupling \cite{aurada2013coupling}.
 Similar results as in Theorem~\ref{aubinNitsche} and in Remark~\ref{rem:dependence_pos} for the extended Problem~\ref{classProbleme}
 can be gained for the interface Problem~\ref{classProblem}.
\end{remark}

\section{Numerical illustration }\label{sec:results}
To illustrate the theoretical results, we consider for each model problem one example. 
The description of NURBS geometric entities are obtained by means of the \textsf{NURBS toolbox} included 
in \textsf{GeoPDEs}, which is implemented in \textsf{MATLAB}, see \cite{geopdes}. In the same spirit, 
the required matrices associated to the boundary integral operators are implemented by using, adapting, 
and supplementing some structures of \textsf{GeoPDEs}. The implementation of the BIOs for arbitrary ansatz 
functions is performed numerically using standard Gauss-Legendre quadrature for regular contributions 
and by means of some Duffy-type transformations with a subsequent combination of logarithmic and Gaussian quadrature 
for the singular parts, see, e.g., \cite[Chapter~4.3]{bantle}. In the following, the $\mathcal{H}$ 
and $\mathcal{H}_0$-norm of~\eqref{eq:spaceH} and~\eqref{eq:spaceH0}, respectively, are computed by Gaussian quadrature.
However, we replace the non-computable norm $\norm{\cdot}_\spdht$
by the equivalent norm $\norm{\cdot}^2_\mathcal{V}$ stated in \eqref{normV}.
  Moreover, we measure the error for the evaluated solution in the BEM-domain in the following way.
First we define an evaluation path $\Gamma_\mathrm{e}$ in the BEM domain. For a certain number $N$ of evaluations 
points $x_i\in\Gamma_\mathrm{e}$, $i=1,\ldots,N$,
 $x_i\not = x_j$,  with $i\not = j$,
 we define the pointwise error as
\begin{equation}
  \label{pointwiseError}
\text{error} = \max_{i=1,\ldots,N} \vert u^\mathrm{e}(x_i) - u^\mathrm{e}_\ell(x_i) \vert \quad\text{and}\quad
\text{error} = \max_{i=1,\ldots,N} \vert u_\mathrm{b}(x_i) - u_{\mathrm{b},\ell}(x_i) \vert.
\end{equation} 
Here, $u^\mathrm{e}_\ell$ and $u_{\mathrm{b},\ell}$
are the discrete evaluations of the corresponding representation formula~\eqref{repFormula_intro} with the Cauchy data from
the corresponding discrete coupling problem. Note that for both problem types the trace has to be calculated with the
aid of the jump condition~\eqref{dirJump} and~\eqref{dirJumpe}, respectively.

In all our experiments, we consider uniform $h$-refinement, 
for different degrees of B-Splines, starting from the minimal degrees needed to represent the geometry exactly. 
Increasing the degree of basis functions is called $p$-refinement. Furthermore, note that the number of elements in every 
$h$-refinement step is calculated by $N_\mathrm{e} = \frac{N_\Omega}{h^{d}}$, where $N_\Omega$ denotes the number of patches 
and $d$ the dimension of the considered manifold. The element size $h$ is obtained in every refinement 
step $\ell \in \mathbb{N}$ by $h = \frac{1}{\ell+1}$.
\subsection{Single domain}\label{subsec:mexicanHat}
In the first example, we consider a square domain $\Omega := \left( -0.25,0.25 \right)^2$ 
and denote its boundary by $\Gamma$. We parametrize $\Omega$ as a single patch domain using linear B-Spline functions 
in each parametric direction. It is obvious that Assumption \ref{A5} about the multipatch geometry is satisfied.  

Moreover, we consider the interface problem \eqref{classProblem} with a linear material tensor $\mathcal{U} := \Id$. 
As in \cite{erath2017}, we prescribe the exact solutions 
\begin{equation*}
u(x) = \left( 1-100 x_1^2 - 100 x_2^2 \right)e^{-50(x_1^2+x_2^2)}, \qquad x=\left( x_1,x_2 \right) \in \Omega, 
\end{equation*}
and 
\begin{equation*}
u^\mathrm{e}(x) = \log(\sqrt{x_1^2 +x_2^2}), \qquad  x \in \Omega^\mathrm{e}.
\end{equation*}
We calculate the jumps $u_0$, $\phi_0$, and the right-hand side $f$ appropriately.
Solving the coupled problem using the isogeometric framework, as described in the previous section, yields a 
discrete solution $(u_\ell,\phi_\ell) \in \mathcal{H}_\ell := \mathbb{S}^0(\Omega) \times  \mathbb{S}^2(\Gamma)$. 
An isogeometric approach for this example is not mandatory since the domain $\Omega$ is standard Cartesian, see, 
e.g., \cite{erath2017}. However, we 
want to demonstrate our higher order coupling approach and in particular the super-convergence behaviour of this example.
Figure \ref{fig:mexicanHat} shows the solution $u_\ell \in \mathbb{S}^0(\Omega)$ in the interior domain, 
as well as the exterior solution $u_\ell^\mathrm{e}$ in a subset of $\Omega^\mathrm{e}$, which we call an evaluation domain
$ \Omega^\mathrm{e}_\mathrm{e} := \left( -\frac{1}{2},\frac{1}{2} \right)\backslash \overline{\Omega}$. 
The exterior solution $u_\ell^e$ is obtained from the representation formula \eqref{repFormula_intro} ($\kappa=1$) with
the computed Cauchy data $({u_\ell}_{|\Gamma}-u_0,\phi)$ from our discrete solution of the interface problem. 
Thereby, the degree of the considered B-Spline space for the domain discretization is $p=2$ and its dimension corresponds to an $h$-refinement level $\ell = 20$.
\begin{figure}[!tb]
\begin{center}
\includegraphics[width=11.5cm]{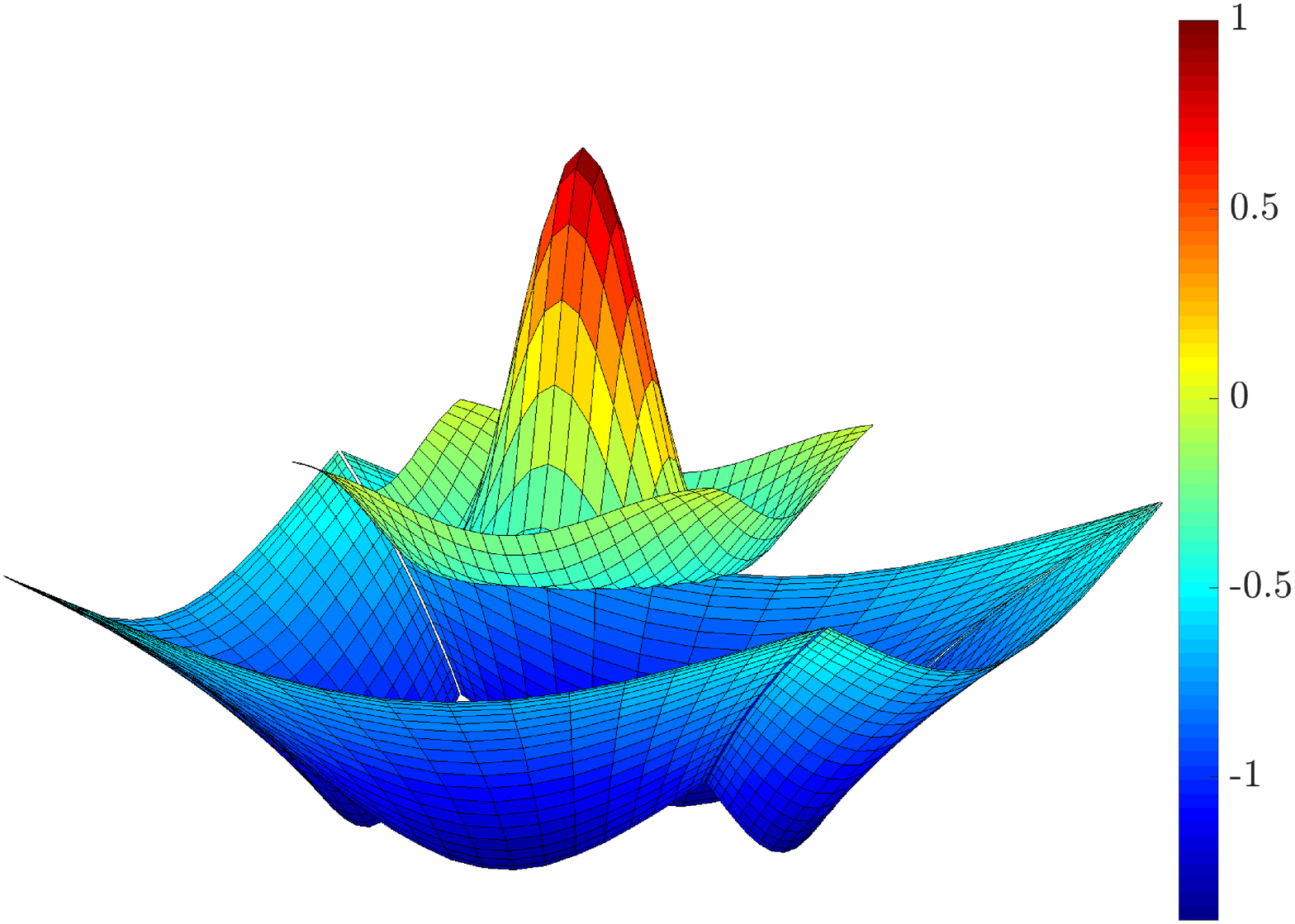}
\end{center}
\caption{Solution $\left(u_\ell,u_\ell^\mathrm{e}\right) \in \mathbb{S}^0(\Omega) \times \mathbb{S}^0(\Omega^\mathrm{e}_\mathrm{e})$ of example in Section \ref{subsec:mexicanHat},
where we restrict the representation of the exterior solution to
$\Omega^\mathrm{e}_\mathrm{e} := \left( -\frac{1}{2},\frac{1}{2} \right)\backslash \overline{\Omega}$. 
The considered B-Spline space corresponds to $\mathbb{S}^0(\Omega)$ 
with degree $p=2$ and an $h$-refinement level $\ell = 20$. The exterior solution is obtained by the evaluation of $25$ points in each of the $4$ exterior patches. } \label{fig:mexicanHat}
\end{figure}%
As a first numerical experiment, we analyze the convergence of the isogeometric FEM-BEM coupling 
with respect to  the norm $\sqrt{\norm{u-u_\ell}^2_{H^1(\Omega)}+\norm{\phi-\phi_\ell}^2_\mathcal{V}}$, 
which is equivalent to $\mathcal{H}$-norm in $\Omega$. Since the solution 
is smooth, the expected order of convergence is equal to the degree of the considered discrete space $\mathcal{H}_\ell$, 
as given in the a~priori estimate from Theorem \ref{aPrioriEstimate}. In Figure \ref{fig:convergence_mh_int} we observe 
the predicted optimal convergence of the method for B-Spline spaces of degree $p = 1,2,3$. \\
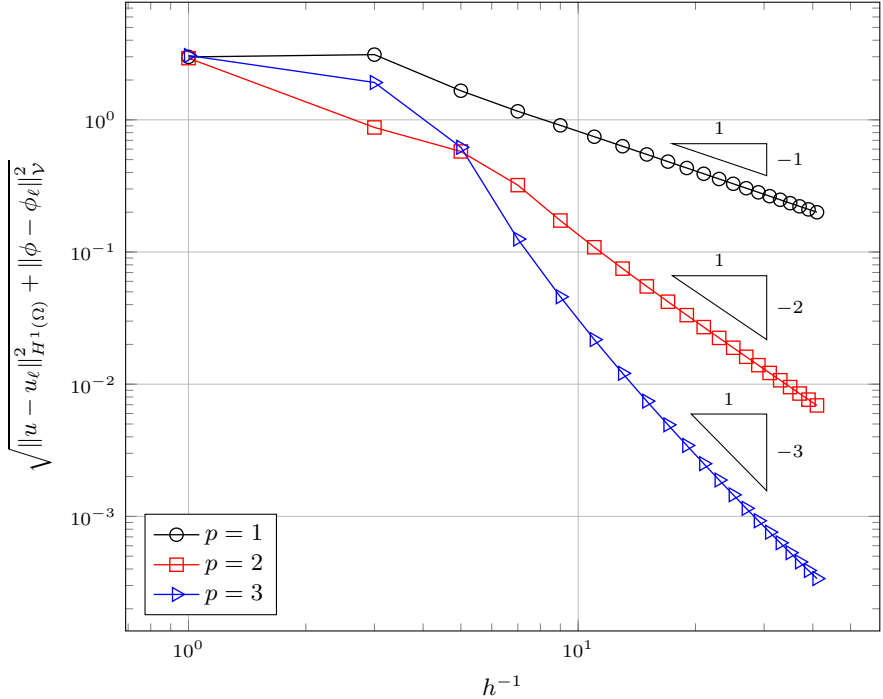
\begin{figure}[!tb]
\centering
\begin{tikzpicture}
\begin{loglogaxis}[width=11.5cm,
xlabel={\small $h^{-1}$},
ylabel={\small $\sqrt{\norm{u-u_\ell}^2_{H^1(\Omega)}+\norm{\phi-\phi_\ell}^2_\mathcal{V}}$ },
 font={\scriptsize},grid=major,
legend entries={$p=1$,$p=2$,$p=3$},
legend style={font=\small,at={(0.025,0.025)},
anchor=south west,thin}]
\addplot [color=black, mark=o, line width=0.5pt, mark size=2.5pt, mark options={solid, black}] table {data/convergence_p1_a__error_H_0to40_mh.dat};
\addplot [color=red, mark=square, line width=0.5pt, mark size=2.5pt, mark options={solid, red}] table {data/convergence_p2_a__error_H_0to40_mh.dat};
\addplot [color=blue, mark=triangle, line width=0.5pt, mark size=3pt, mark options={solid, rotate=270, blue}] table {data/convergence_p3_a__error_H_0to40_mh.dat};
\logLogSlopeTriangleInv{0.85}{-0.1}{0.345}{-3}{black}{$-3$};
\logLogSlopeTriangleInv{0.85}{-0.125}{0.565}{-2}{black}{$-2$};
\logLogSlopeTriangleInv{0.85}{-0.125}{0.775}{-1}{black}{$-1$};
\end{loglogaxis}
\end{tikzpicture}
\caption{Convergence of discrete solution $(u_\ell,\phi_\ell)\in\mathcal{H}_\ell$ to the solution $(u,\phi)\in\mathcal{H}$
for the example in Section \ref{subsec:mexicanHat}.
The considered B-Spline spaces have the degrees $p=1,2,3$, respectively, and the error is presented 
in the norm $\sqrt{\norm{\cdot}^2_{H^1(\Omega)}+\norm{\cdot}^2_\mathcal{V}}$, which is equivalent to the standard  
$\mathcal{H}=H^1(\Omega)\times H^{-\frac{1}{2}}(\Gamma)$ norm.} \label{fig:convergence_mh_int}
\end{figure}~
In the second experiment, we investigate the convergence of the solution in the exterior domain. Note that 
our exterior solution is smooth. 
At a first step, we evaluate the solution on an evaluation path $\Gamma_\mathrm{e}$, which we define here as the boundary 
of $\left( -0.35,0.35 \right)^2$. We calculate the error according to \eqref{pointwiseError} with $N=20$ evaluations
points. In Figure \ref{fig:convergence_Exterior_mh}, 
we observe a doubling of the convergence rates with respect to the pointwise error, which
confirms the theoretical considerations in Remark \ref{rem:dependence_pos}, see also Remark \ref{rem:evaluation}. 
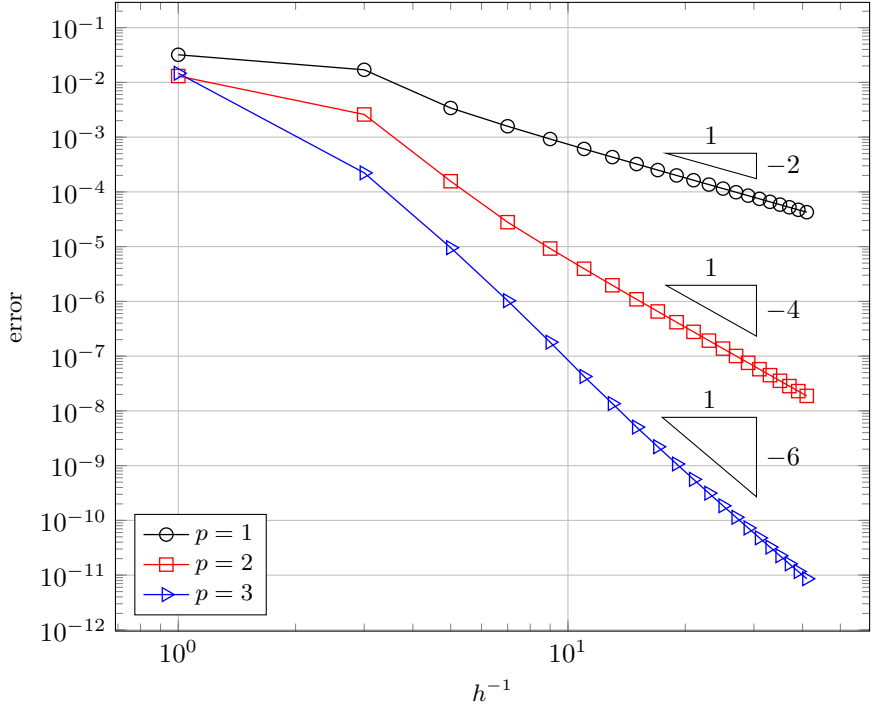
\begin{figure}[!tb]
\centering
\begin{tikzpicture}
\begin{loglogaxis}[width=11.5cm,
xlabel={\small $h^{-1}$},
ylabel={\small error},grid=major,
legend entries={$ p =1 $,$ p = 2 $,$p=3$},
legend style={font=\small,at={(0.025,0.025)},
anchor=south west,thin}
]
\addplot [color=black, mark=o, line width=0.5pt, mark size=2.5pt, mark options={solid, black}] table {data/convergence_p1_ae__error_2_0to40_mh.dat};
\addplot [color=red, mark=square, line width=0.5pt, mark size=2.5pt, mark options={solid, red}] table {data/convergence_p2_ae__error_2_0to40_mh.dat};
\addplot [color=blue, mark=triangle, line width=0.5pt, mark size=3pt, mark options={solid, rotate=270, blue}] table {data/convergence_p3_ae__error_2_0to40_mh.dat};
\logLogSlopeTriangleInv{0.85}{-0.125}{0.34}{-6}{black}{$-6$};
\logLogSlopeTriangleInv{0.85}{-0.12}{0.55}{-4}{black}{$-4$};
\logLogSlopeTriangleInv{0.85}{-0.12}{0.76}{-2}{black}{$-2$};
\end{loglogaxis}
\end{tikzpicture}
\caption{Convergence of the exterior solution for the example in Section \ref{subsec:mexicanHat}. 
The $\text{error} = \max_{i=1,\ldots,N} \vert u^\mathrm{e}(x_i) - u^\mathrm{e}_\ell(x_i) \vert$
is calculated with $N=20$ evaluations points on $\Gamma_\mathrm{e}$. The considered B-Spline spaces have the degrees $p=1,2,3$, respectively. We observe a doubling of the convergence rates.}\label{fig:convergence_Exterior_mh}
\end{figure}
Furthermore, we want to investigate the dependency of the super-convergence on the position of the evaluation point for
a fixed degree $p=3$ of the B-Spline space.
For this, we compare the convergence behavior of the exterior solution on three distinct evaluation paths. 
We denote the paths by $\Gamma_{\mathrm{e},1}$, $\Gamma_{\mathrm{e},2}$, and $\Gamma_{\mathrm{e},3}$, which are the
boundaries of $\left( -1,1 \right)^2$, $\left( -0.35,0.35 \right)^2$, and $\left( -0.26,0.26 \right)^2$, respectively. 
For each evaluation path we choose again $20$ evaluation points to compute the pointwise error \eqref{pointwiseError}. 
The result is visualized in Figure \ref{fig: dependence_pos_mh}, 
where we observe the expected behavior, see Remark \ref{rem:dependence_pos}. 
In particular, super-convergence is readily observed for the solution on $\Gamma_{\mathrm{e},1}$ and $\Gamma_{\mathrm{e},2}$.
We note that for the error in $\Gamma_{\mathrm{e},1}$ we are already at machine precision. 
However, the related constant is larger for the solution on $\Gamma_{\mathrm{e},2}$, 
since the path is closer to the interface boundary $\Gamma=\partial \Omega$ with $\Omega := \left( -0.25,0.25 \right)^2$. 
The same behavior can be observed for the path $\Gamma_{\mathrm{e},3}$, which is even closer to $\Gamma$. 
However, the quality of the computation is also deteriorated in the asymptotic. 
Additionally, we observe saturation effects for higher refinement levels. 
This can be improved by increasing the number of Gaussian quadrature points $N_\mathrm{Gauss}$ on each boundary element, as it is shown in Figure \ref{fig: dependence_gauss_mh}. 
However, this in turn is time consuming. 
With using special extraction techniques, such as the ones developed for $3$-D in \cite{schwab1999extraction}, this undesirable effect
can be reduced. However, a further investigation is beyond the scope of this work.        
\begin{figure}[!tb]
\centering
\begin{tikzpicture}
\begin{loglogaxis}[width=11.5cm,
xlabel={\small $h^{-1}$},
ylabel={\small error},grid=major,
legend entries={$\Gamma_{\mathrm{e},1}$,$\Gamma_{\mathrm{e},2}$,$\Gamma_{\mathrm{e},3}$},
legend style={font=\small,at={(0.025,0.025)},
anchor=south west,thin}
]
\addplot [color=black, mark=o, line width=0.5pt, mark size=2.5pt, mark options={solid, black}] table {data/convergence_p3_ae__error_2_0to40_mh_1.dat};
\addplot [color=red, mark=square, line width=0.5pt, mark size=2.5pt, mark options={solid, red}] table {data/convergence_p3_ae__error_2_0to40_mh.dat};
\addplot [color=blue, mark=triangle, line width=0.5pt, mark size=3pt, mark options={solid, rotate=270, blue}] table {data/convergence_p3_ae__error_2_0to40_mh_026_5.dat};
\logLogSlopeTriangle{0.55}{0.175}{0.15}{-6}{black}{$-6$};
\end{loglogaxis}
\end{tikzpicture}
\caption{Dependence of the super-convergence on the evaluation points. 
The boundary $\Gamma$ of $\left( -0.25,0.25 \right)^2$ is the discretization boundary for the BEM.
The $N=20$ evaluations points to calculate the $\text{error} = \max_{i=1,\ldots,N} \vert u^\mathrm{e}(x_i) - u^\mathrm{e}_\ell(x_i) \vert$
are on $\Gamma_{\mathrm{e},1}$, $\Gamma_{\mathrm{e},2}$, and $\Gamma_{\mathrm{e},3}$, which are the
boundaries of $\left( -1,1 \right)^2$, $\left( -0.35,0.35 \right)^2$, and $\left( -0.26,0.26 \right)^2$, respectively.
We observe the growing constant of the super-convergence constant, which leads to an undesirable saturation for the closest path
$\Gamma_{\mathrm{e},3}$ with respect to $\Gamma$.}\label{fig: dependence_pos_mh}
\end{figure}%
\begin{figure}[!tb]
\centering
\begin{tikzpicture}
\begin{loglogaxis}[width=11.5cm,
xlabel={\small $h^{-1}$},
ylabel={\small error},grid=major,
legend entries={$N_\mathrm{Gauss} = 25$,$N_\mathrm{Gauss} = 50$,$N_\mathrm{Gauss} = 100$},
legend style={font=\small,at={(0.025,0.025)},
anchor=south west,thin}
]
\addplot [color=black, mark=o, line width=0.5pt, mark size=3pt, mark options={solid, rotate=270,black}] table {data/convergence_p3_ae__error_2_0to40.dat};
\addplot [color=red, mark=square, line width=0.5pt, mark size=3pt, mark options={solid, rotate=270, red}] table {data/convergence_p3_ae__error_2_0to40_2.dat};
\addplot [color=blue, mark=triangle, line width=0.5pt, mark size=3pt, mark options={solid, rotate=270, blue}] table {data/convergence_p3_ae__error_2_0to40_mh_026_5.dat};
\logLogSlopeTriangleInv{0.85}{-0.15}{0.5}{-3}{black}{$-3$};
\logLogSlopeTriangle{0.55}{0.175}{0.15}{-6}{black}{$-6$};
\end{loglogaxis}
\end{tikzpicture}
\caption{Dependence of the saturation effect on the number of Gaussian points. 
The boundary $\Gamma$ of $\left( -0.25,0.25 \right)^2$ is the discretization boundary for the BEM.
The evaluations points $N=20$ to calculate $\text{error} = \max_{i=1,\ldots,N} \vert u^\mathrm{e}(x_i) - u^\mathrm{e}_\ell(x_i) \vert$
are on $\Gamma_{\mathrm{e},3}$ of Figure \ref{fig: dependence_pos_mh}, i.e., the
boundary of $\left( -0.26,0.26 \right)^2$. The number of Gaussian points used for the evaluation as well as the assembling of the matrices are $25$, $50$, and $100$, respectively.
We observe an amelioration of the undesirable saturation with increasing the number of Gaussian points. For $N_\mathrm{Gauss} = 100$, the expected super-convergence is restored to some extent.}\label{fig: dependence_gauss_mh}
\end{figure}%
\subsection{Multiple domains}\label{machine}
In this second example, we consider the non-symmetric isogeometric FEM-BEM coupling for the extended boundary value problem \eqref{classProbleme} 
as described in Section \ref{sec:twoDomainProblem}. 
The topology of the model problem and the notation can be adopted from Figure \ref{fig:2domains}. 
However, we consider here a problem domain constructed over circles, see Figure \ref{2ndExple_machine}.  
In particular, if we denote by $B((x_1,x_2);r)$ a circular domain with midpoint $(x_1,x_2)$ and 
radius $r$ we arrive at the following setting:
$\Omega_1=B((0,0);0.39)\backslash B((0,0);0.1)$, $\Omega_2=B((0,0);0.6)\backslash B((0,0);0.4)$, and the thin air gap
$\Omega_\mathrm{b}=B((0,0);0.4)\backslash B((0,0);0.39)$, which describes in fact three rings.
We prescribe the right-hand side $f_i$ in $\Omega_i$ as
\begin{align*}
 f_1(x_1,x_2) = 0 \qquad\text{and}\qquad  f_2(x_1,x_2) =100  \sin(\varphi),
\end{align*} 
where $\varphi$ is the standard angle in a polar coordinate system.
The non-linear material tensor is chosen as
\begin{equation}\label{nonLinearMaterial}
\mathcal{U}_i\nabla u_i : = g(\vert \nabla u_i \vert )\nabla u_i, \, i = 1,2, \quad \text{with } g(t)= \begin{cases}  \frac{1}{150}& \text{ for } t = 0 \\ \frac{1}{100t} \tanh^{-1}(\frac{2t}{3})& \text{ for } 0 < t \leq t_\mathrm{c}:=\frac{3}{2}-\epsilon \\ 1+\beta \exp(-\alpha t) & \text{ for } t > t_\mathrm{c} \end{cases},
\end{equation}  
where we choose $\epsilon > 0$ arbitrarily such that $g(t) < 1$, for all $ 0 < t \leq t_\mathrm{c}$, and $\alpha, \beta$ such that $g(t)$ is continuously differentiable for all $t > 0$\footnote{In this experiment, we choose $\epsilon = 1/100$, $\alpha = g^\prime(t_\mathrm{c})/(1-g(t_\mathrm{c}))$, and $\beta = (g(t_\mathrm{c}) - 1)\exp(\alpha t_\mathrm{c})$.}. In addition, we do not allow jumps, i.e., $u_{0,i}=0$ and $\phi_{0,i}=0$.\\ 
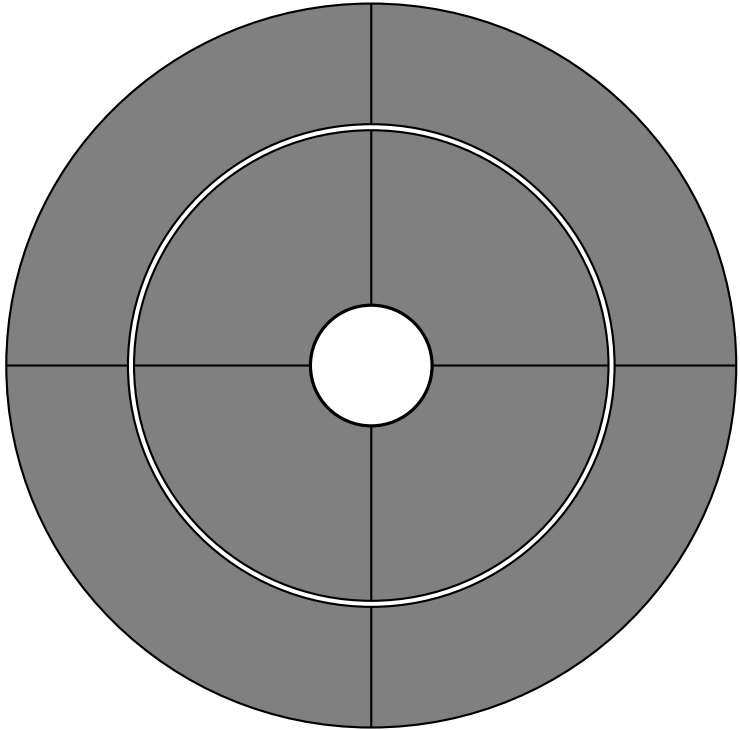
\begin{figure}[!tb]
\begin{center}
\begin{tikzpicture}[scale=0.8]
\draw[fill=gray,thick] (0,0) circle [radius=6] ;
\draw[fill= white, thick] (0,0) circle [radius=4] ;
\draw[fill=gray, thick] (0,0) circle [radius=3.9] ;
\draw[fill= white,very thick] (0,0) circle [radius=1] ;
\draw[-,  thick] (0,1) --(0,3.9);
\draw[-,thick] (0,-1) --(0,-3.9);
\draw[-, thick] (1,0) --(3.9,0);
\draw[-,  thick] (-1,0) --(-3.9,0);
\draw[-,  thick] (0,4) --(0,6);
\draw[-, thick] (0,-4) --(0,-6);
\draw[-, thick] (4,0) --(6,0);
\draw[-, thick] (-4,0) --(-6,0);
%\draw[red,dashed, thin] (-3.95,0) arc (-180:180:3.95) ;
\end{tikzpicture}
\end{center}
\caption{Multipatch representation ($4$ patches per domain: the interfaces and boundaries of the patches are highlighted by bold lines) of the example
in Section \ref{machine}.}\label{2ndExple_machine}
\end{figure}~
Following the isogeometric approach, we model both domains separately 
and according to Definition \ref{def:multipatch} as multipatch domains consisting of four patches, see
Figure \ref{2ndExple_machine}. Each patch is represented exactly by a NURBS of degree $p=2$ 
in each parametric direction. Moreover, the Assumption \ref{A5} is obviously satisfied. 
Note that this model configuration with the circular geometry can be interpreted as a $2$D section of a simplified $2$-pole synchronous machine \cite[Section~5.2]{Kurz:diss}. This type of applications motivates also the consideration of non-linear operators. In fact, these devices are mainly made of ferromagnetic materials, which are known to be non-linear.  In particular, by neglecting anisotropies and hysteresis effects, ferromagnetic materials can be modeled by using non-linear operators of the same type as the ones we considered in Lemma \ref{dataDep_interface} and Lemma \ref{dataDep_interfaceBVP}, and for this example in \eqref{nonLinearMaterial}. For more details about this topic, see \cite{pechstein:thesis} and \cite{roemer}, for instance. Furthermore, we refer to \cite{zeger} for electrical 
engineering simulations of electric machines. \\
In this experiment, the arising non-linear problem is solved by using a standard Picard iteration method. For the stopping criterion, we consider a relative residual error of $10^{-10}$. 
In our simulation below we need an average of $35$ Picard iterations to fulfill the criterion. \\
The solutions $u_1$ and $u_2$ in the interior domains $\Omega_1$ and $\Omega_2$, respectively, 
 are visualized in Figure \ref{fig:sol_exple2_machine}. In the context of electric machines, $u_i,\,i=1,2,$ can be 
 interpreted as the third component of the magnetic vector potential. Note that the equipotential lines, i.e., the continuous black lines in Figure \ref{fig:sol_exple2_machine} 
are the magnetic field lines. The interaction of the magnetic fields stemming 
 from the rotor and the stator in the air gap may induce a mechanical torque. This leads the rotor, i.e., the interior ring to move in order to 
 reduce the (spatial) phase shift between both magnetic fields. In particular, the computation of torques and forces involves the computation of the magnetic flux density, which in turn requires the evaluation of the solution and its derivatives in the gap domain, c.f., \cite{kurzMotor}, for instance. Therefore, the super-convergence behavior in the air gap of the machine is of particular interest.\\
 In a post-processing step, we compute the magnetic reluctivity, which is defined as the reciprocal of the magnetic permeability. Formally, it is given by the function $g(\vert \nabla u_i \vert ), \, i=1,2,$ in \eqref{nonLinearMaterial}, which we evaluate by using the solutions $u_1$ and $u_2$. Since we are considering non-linear materials, the reluctivity is not constant across the electric machine. This is depicted in Figure \ref{fig:sol_exple2_machine_saturationEffect}. Note that the thick black lines are the same as those in Figure \ref{fig:sol_exple2_machine}, i.e., they represent the equipotential lines of the solution $u_1$ and $u_2$.  \\   
\begin{figure}[!tb]
\begin{center}
\includegraphics[width=11.5cm,trim=0.5cm 2.5cm 0cm 1.5cm,clip]{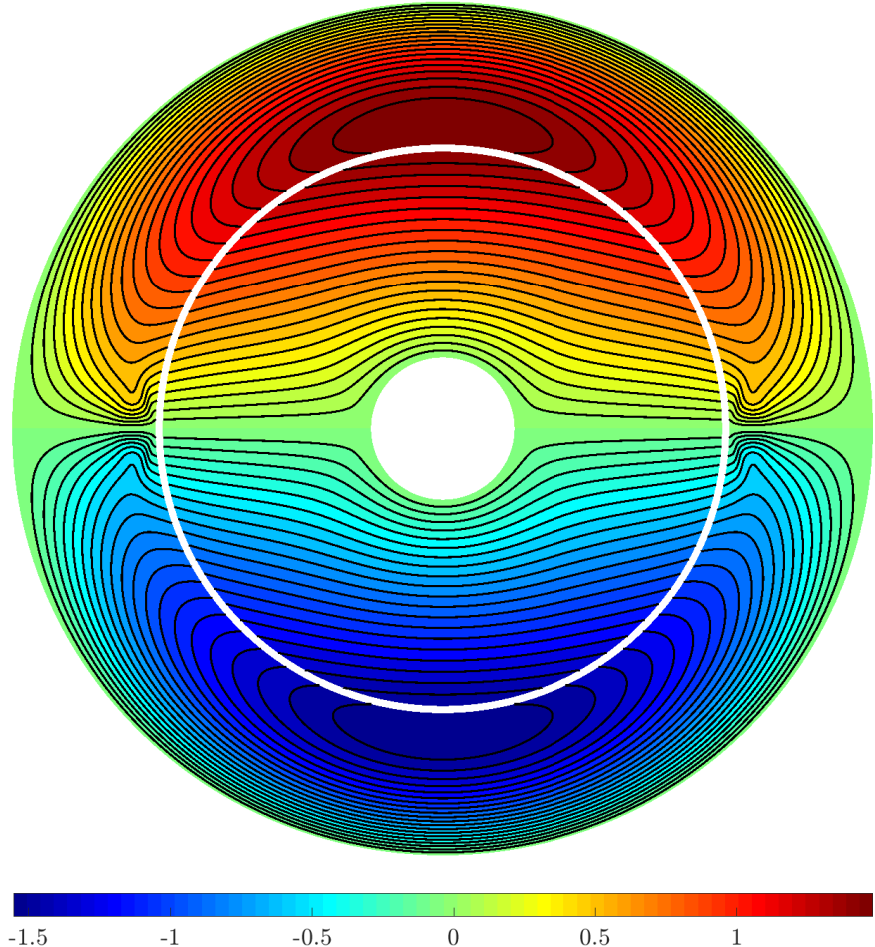}
\end{center}
\caption{The solution in the interior domains $\Omega_1$ and $\Omega_2$ for the electric machine of the example
in Section \ref{machine}. The equipotential lines
are the magnetic field lines.} \label{fig:sol_exple2_machine}
\end{figure}
\begin{figure}[!tb]
\begin{center}
\includegraphics[width=11.5cm,trim=0.5cm 2.5cm 0cm 1.5cm,clip]{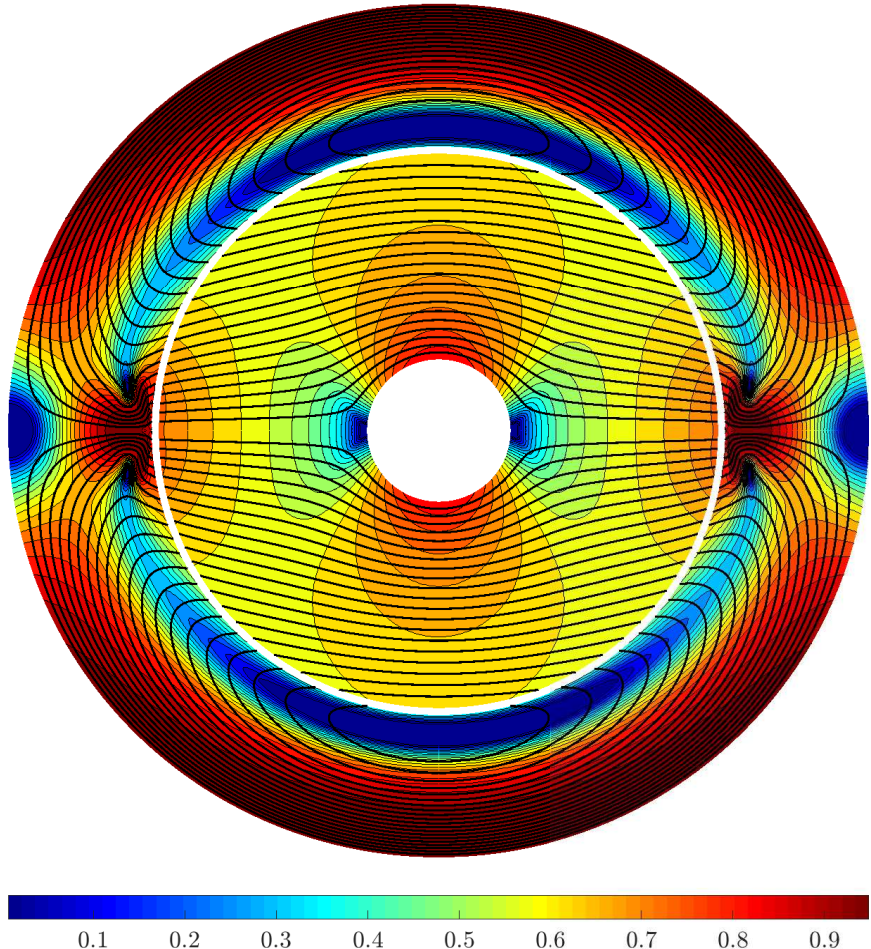}
\end{center}
\caption{Saturation effects caused by the non-linear material tensors. The color-bar and the thin lines 
represent the levels of the magnetic reluctivity, which is given by $g(\nabla u_i), $ for $i=1,2$, and
 evaluated using the derivatives of the solutions $u_1$ and $u_2$. The thick equipotential lines show the flow direction of the magnetic field.} \label{fig:sol_exple2_machine_saturationEffect}
\end{figure}
To verify Remark \ref{rem:dependence_pos} numerically, 
we evaluate the solution in the BEM domain $\Omega_\mathrm{b}$
on the evaluation path given as the parametrized circle $\partial B((0,0);0.395)$. Note that in this example, the BEM is applied in an interior domain $\Omega_\mathrm{b}$.
Hence, we use for the evaluation the representation formula \eqref{repFormula_intro} with $\kappa=0$ and the  
complete Cauchy data on $\Gamma_1$ and $\Gamma_2$, which are available after solving Problem \ref{discreteextended} with
the jump condition \eqref{dirJumpe}. 
An analytical solution for our model problem is not known. Hence, to verify the convergence order, we follow a standard procedure:
The mesh of the current solution is successively refined three times and we calculate the corresponding discrete solutions. 
We apply the Aitkin's $\Delta^2$-extrapolation to this sequence of discrete solutions and this extrapolated value is
the reference solution $u^\mathrm{e}(x_i)$ for the 
$\text{error} = \max_{i=1,\ldots,N} \vert u^\mathrm{e}(x_i) - u^\mathrm{e}_\ell(x_i) \vert$ calculated from $N=20$ evaluations points. 
This error is visualized in Figure \ref{fig:convergence_airGapMultipleDomain_machine_thin} 
for ansatz spaces of degree $p=2$ and $p=3$, where we observe an amelioration of the convergence rates. Note that this amelioration depends on the quality of the numerical integration, as shown in Figure \ref{fig: dependence_gauss_mh} for the example of Section \ref{subsec:mexicanHat}. For this example, noticeable amelioration of the convergence rates were only observable for a high number of Gaussian quadrature points. In this case, $N_\mathrm{Gauss} = 400$ points were considered for the assembling of the BEM matrices, which is very time consuming. The dominance of the quadrature error for this type of evaluations can however be tackled, as mentioned in the previous section, by using special extraction techniques. Moreover, efficient assembly of the BEM matrices based on B-spline tailored quadrature rules, as given in \cite{bSplineBasedQuad}, together with suitable compression methods, see e.g., \cite{doelz2019isogeometric}, would accelerate the computation considerably. However, this investigation is beyond the scope of this work.     
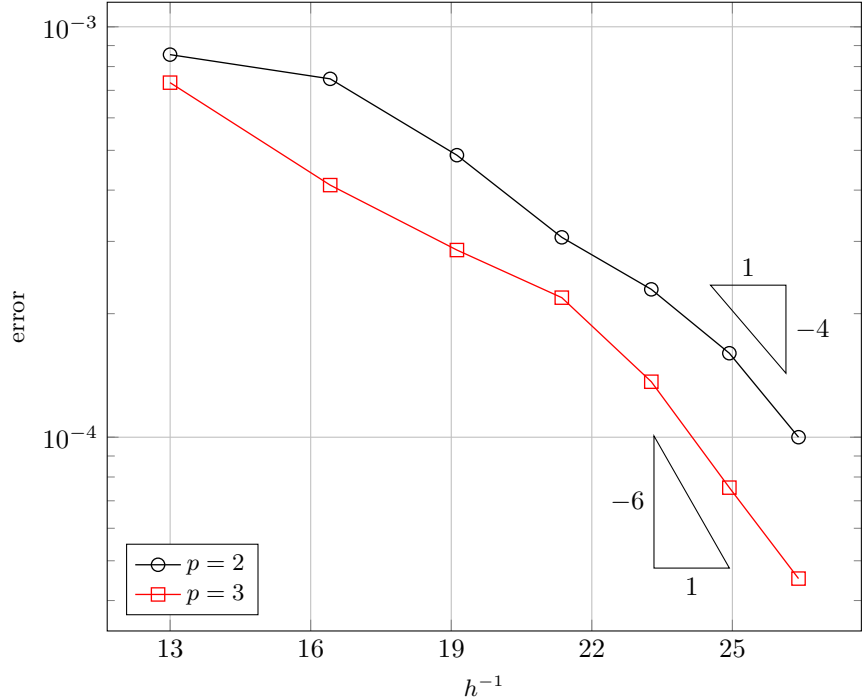
\begin{figure}[!tb]
\centering
\begin{tikzpicture}
\begin{loglogaxis}[width=11.5cm,
xlabel={\small $h^{-1}$},
xticklabels={$10$, $13$, $16$, $19$, $22$, $25$, $28$},
ylabel={\small error},grid=major,
legend entries={$p = 2$, $p=3$},
legend style={font=\small,at={(0.025,0.025)},
anchor=south west,thin}
]
%\addplot [color=black, mark=o, line width=0.5pt, mark size=2.5pt, mark options={solid, black}] table {data/convergence_p2_a^e__error_2_0to16_machine_thin.dat};
%\addplot [color=red, mark=square, line width=0.5pt, mark size=2.5pt, mark options={solid, red}] table {data/convergence_p3_a^e__error_2_0to16_machine_thin.dat};
\addplot [color=black, mark=o, line width=0.5pt, mark size=2.5pt, mark options={solid, black}] table {data/convergence_p2_ae__error_2_0to14_machine_realNonlinearityBigGap.dat};
\addplot [color=red, mark=square, line width=0.5pt, mark size=2.5pt, mark options={solid, red}] table {data/convergence_p3_ae__error_2_0to14_machine_realNonlinearityBigGap.dat};
%\addplot [color=black, mark=o, line width=0.5pt, mark size=2.5pt, mark options={solid, black}] table {data/convergence_p2_a^e__error_2_0to27_machine_realNonlinearityThinGap.dat};
%\addplot [color=red, mark=square, line width=0.5pt, mark size=2.5pt, mark options={solid, red}] table {data/convergence_p3_a^e__error_2_0to27_machine_realNonlinearityThinGap.dat};
%\addplot table {data/convergence_p3_error_2_0to22_nonLinear.dat};
%\logLogSlopeTriangle{0.625}{0.1}{0.1125}{-6}{black}{$-6$};
\logLogSlopeTriangle{0.725}{0.1}{0.1}{-6}{black}{$-6$};
\logLogSlopeTriangleInv{0.9}{-0.1}{0.55}{-4}{black}{$-4$};
\end{loglogaxis}
\end{tikzpicture}
\caption{Convergence of the solution on the evaluation path $\Gamma_\mathrm{e}= \partial B((0,0);0.395)$ (circle) 
in the air gap $\Omega_\mathrm{b}$ for
the example in Section \ref{machine}. 
The $\text{error} = \max_{i=1,\ldots,N} \vert u^\mathrm{e}(x_i) - u^\mathrm{e}_\ell(x_i) \vert$
is calculated with $N=20$ evaluations points on $\Gamma_\mathrm{e}$. As a replacement for the unknown analytical solution
we use an Aitken $\Delta^2$ extrapolation of a sequence of successively refined discrete solutions.} \label{fig:convergence_airGapMultipleDomain_machine_thin}
\end{figure}%
\section{Conclusions}
The non-symmetric FEM-BEM coupling in the isogeometric context for simulating
practical problems with complex geometries turns out to be a promising alternative to classical approaches. 
A transformation to an integral formulation allows a problem in a domain to be reduced to its boundary, where the BEM can be
applied. For exterior problems there is no need to truncate the unbounded domain, and for simulating thin gaps 
there is no need for a complicated remeshing. In both cases numerical errors can be avoided. 
Thanks to the definition of B-Splines, $h$- and $p$-refinements are applied in a straightforward manner. 
Furthermore, multiple domain modeling can be done independently. This is particularly advantageous
if we consider moving or deforming geometries. A classical transmission and a multiple domain problem with parts
of non-linear material are considered. Obviously, FEM is applied to the non-linear areas, whereas BEM is exclusively used for
the linear problem.
For both model problems, well-posedness for the continuous and discrete problem, and quasi-optimality and convergence rates 
for the numerical approximation
are mathematically analyzed in the isogeometric framework. 
Furthermore, we show an improvement of the convergence behavior, 
if we consider the error in functionals of the solution. 
This is motivated by a practice-oriented application 
such as electric machines. Here the computation of torques are a central task and involve the evaluation of some 
derivatives of the solution in the BEM domain. We observe for both model applications this super-convergence, which confirms 
the theory. Future extensions of the method may include the consideration of 
parabolic-elliptic problems and a rigorous analysis of the coupling for $\curl \curl$-type equations in $3$D.  % 

\bibliography{igafembem_preprint}
% \bibliography{bibl} 
\bibliographystyle{alpha}
\end{document}